\definecolor{BlueGreen}{HTML}{74a193}
\newtheorem{introtheorem}{Theorem}
\newtheorem{thm}{Theorem}[section]
\newtheorem{cor}[thm]{Corollary}
\newtheorem{lem}[thm]{Lemma}
\newtheorem{prop}[thm]{Proposition}
\theoremstyle{definition}
\newtheorem{defn}[thm]{Definition}
\theoremstyle{remark}
\newtheorem{rmk}[thm]{Remark}
\newtheorem{exm}[thm]{Example}
\newcommand{\CA}{\mathcal{A}}
\newcommand{\CH}{\mathcal{H}}
\newcommand{\CL}{\mathcal{L}}
\newcommand{\CP}{\mathcal{P}}
\newcommand{\CQ}{\mathcal{Q}}
\newcommand{\CR}{\mathcal{R}}
\newcommand{\NR}{{\mathbf R}}
\newcommand{\NQ}{{\mathbf Q}}
\def\NN{\ensuremath{\mathbb{N}}}
\def\KK{\ensuremath{\mathbb{K}}}
\def\ZZ{\ensuremath{\mathbb{Z}}}
\def\QQ{\ensuremath{\mathbb{Q}}}
\newcommand{\Ab}{{\operatorname{Ab}}}
\newcommand{\join}{\vee}
\newcommand{\meet}{\wedge}
\newcommand{\Id}{\mathrm{id}}
\newcommand{\Fun}{\mathrm{Fun}}
\newcommand{\cyl}{\mathrm{cyl}}
\newcommand{\cocyl}{\mathrm{cocyl}}
\newcommand{\Ch}{\mathrm{Ch}}
\newcommand{\Mod}{{\operatorname{-mod}}}
\newcommand{\End}{\operatorname{End}}
\newcommand{\Aut}{\operatorname{Aut}}
\newcommand{\op}{{\mathrm{op}}}
\newcommand{\res}{\operatorname{Res}}
\newcommand{\direct}[1]{\overrightarrow{#1}}
\newcommand{\invers}[1]{\overleftarrow{#1}}
\renewcommand{\d}{\mathrm{d}}
\newcommand{\cd}{\mathrm{cd}}
\renewcommand{\ll}{\sqsubset}
\newcommand{\lleq}{\sqsubseteq}
\DeclareMathOperator*\colim{colim}
\newcommand{\hocolim}{\operatorname{hocolim}}
\newcommand{\PSL}{\operatorname{PSL}}
\newcommand{\Bianchids}{1,2,7,11}
\title{Cohomology of non-finite CL-shellable posets}
\author[G. Carrión Santiago]{Guille {Carrión~Santiago}}
\address{Departamento de Matemática Aplicada, Ciencia e Ingeniería de los Materiales y Tecnología Electrónica, Universidad Rey Juan Carlos, 28933 Mótoles (Madrid), Spain}
\email{guille.carrions@urjc.es}
\author[A. Díaz ramos]{Antonio Díaz Ramos}
\address{Departamento de Ágebra, Geometría y Topología, Universidad de Málaga, Málaga, Spain}
\email{adiazramos@uma.es}
\date{\today}
\subjclass[2020]{
55N35,  	
05B35,  	
55P99, 
55N30, 
18G10
}
\keywords{
Higher limit, Model category, Acyclic functor, Shellable poset, hyperplane arrangement, Recursive coatom ordering
}
\begin{document}
\begin{abstract}
Shellable complexes are homotopy equivalent to a wedge of spheres of possibly different dimensions, so that the (co)homology of the constant functor over the complex is concentrated in those degrees. In this work, we introduce the concept of a stable functor -a local weakening of fibrancy- over a shellable poset, which ensures the vanishing of the (co)homology of such a functor in specific degrees. The methods are based on a model category structure on the category of functors indexed by a filtered poset and the combinatorial structure of shellable posets. Our techniques work over non-finite and non-pure posets and employ a description of (co)homology via explicit fibrant replacements. Applications include acyclicity criteria for Mackey functors, computation of cohomology of $j$-th exterior powers over arrangement lattices, and homological decompositions for Bianchi groups $\Gamma_d$ for $d=1,2,7$ and $11$.
\end{abstract}
\maketitle
\section*{Introduction}

Shellable simplicial and cellular complexes have proven to be a powerful framework for addressing problems in algebraic topology, commutative algebra, and combinatorics. For instance, they are closely related to Stanley–Reisner rings, since shellable complexes are Cohen–Macaulay \cite{Bjoerner1980}, or they may be employed to determine the homotopy type of Coxeter complexes and Tits buildings.
\medskip
 
The first definitions of shellability trace back at least to  \cite{MR90052} and \cite{MR97055}, and we refer the reader to  \cite{B1984b}, \cite{B95}, and \cite{MR1600042} for a historical account of shellability. A modern formulation of shellability is as follows: A finite pure $d$-dimensional simplicial complex $K$ is \emph{shellable} if its $d$-faces can be ordered $F_1,F_2,\dots,F_t$ so that, for all $1\le i<k\le t$, there exists $j<k$ with $F_i\cap F_k\subseteq F_j\cap F_k$ and $\dim(F_j\cap F_k)=d-1$, see \cite[p.1854]{B95}.
The finiteness assumption on the number of $d$-faces can be removed as shown in \cite{B1984b}.
In the 1980s, several alternative notions of shellability emerged for the finite case, such as EL-shellability \cite{Bjoerner1980,Bjoerner1982} and CL-shellability \cite{Bjoerner1983}. This latter notion is of a recursive nature, as Bruggesser and Mani's approach \cite{MR328944}, and it was later generalized by removing the purity condition \cite{BWnonpure}. In this work, we provide a recursive definition of dual CL-shellability for non-necessarily finite and non-necessarily pure posets, see \cref{def:recursive_coatom_ordering}. We work with dual CL-shellable posets instead of CL-shellable posets because the face lattice of a shellable complex is a dual CL-shellable poset, see \cite[Theorem 5.13]{BWnonpure}.\medskip

Recall that (co)homology of functors over a poset is the same thing as (co)homology of sheaves over the poset and, in turn, these two notions coincide with that of higher (co)limits of those functors. In this paper, we stick to this latter point of view. Thus, we study higher (co)limits over a dual CL-shellable poset $\CP$, and we employ certain model category structure, so that higher (co)limits may be computed via (co)fibrant replacements, see Theorems \ref{thm:description_model_category_Mop} and \ref{thm:description_model_category_M}. With this setup, a functor $F\colon \CP^\op\to R\Mod$ is fibrant if and only if the natural morphism
\begin{equation*}
F(p)\to \lim_{\CP_{<p}} F
\end{equation*}
is surjective for every $p\in \CP$. The main contribution of this work is to have been able to isolate a condition that implies the vanishing of the higher limits at a single degree. More precisely, we say that a functor $F\colon \CP^\op\to R\Mod$ is \emph{stable} at degree $i$ if, for every chain $c=c_0\prec c_1\prec \dots \prec c_i=\hat 1$, the natural morphism 
\[
F(c_0)\to \lim_{\langle C(c) \rangle} F
\]
is surjective, where $\hat 1$ is the maximum of $\CP$ and  the sub-poset $\langle C(c)\rangle\subseteq \CP_{<c_0}$ is part of the CL-shellable structure of the poset $\CP$, see Definitions \ref{def:stability} and \ref{def:costability}. Below we state the main result of this paper, where $\d(\CP)$ stands for the degree or dimension of the poset $\CP$, see \cref{sec:Background}.\medskip

\begin{introtheorem}
\label{intro:main_theorem}
Let $\CP$ be a dual CL-shellable poset, $1\le i\le \d(\CP)-1$, and $F\colon \CP^{\op}\to R\Mod$ a stable functor at degree $i$. Then
\[
H^i(\CP\setminus \{\hat 1\};F)=0.
\]
\end{introtheorem}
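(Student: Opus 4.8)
The plan is to compute $H^i(\CP\setminus\{\hat1\};F)=H^i(\CP_{<\hat1};F)$ (note $\CP\setminus\{\hat1\}=\CP_{<\hat1}$, as $\hat1$ is the maximum) by induction on $\d(\CP)$, peeling off the coatoms of $\CP$ according to the recursive coatom ordering of \cref{def:recursive_coatom_ordering} and using the explicit description of higher (co)limits from \cref{thm:description_model_category_Mop,thm:description_model_category_M}. Fix the chosen recursive coatom ordering $a_1,a_2,\dots$ of $\CP$ and set $\CQ_k=\langle a_1,\dots,a_k\rangle\subseteq\CP_{<\hat1}$, the order ideal generated by the first $k$ coatoms; since $\d(\CP)<\infty$, every element below $\hat1$ lies below a coatom, so $\CP_{<\hat1}=\bigcup_k\CQ_k$. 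I would prove the slightly stronger statement that $H^i(\CQ_k;F)=0$ for every $k$ (including, when there are finitely many coatoms, the improper stage $\CP_{<\hat1}$), so that it is available for the sub-ideals appearing in the recursion.

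The first step is a Mayer–Vietoris reduction at each finite stage. The decisive combinatorial input from \cref{def:recursive_coatom_ordering} is that $\CQ_{k-1}\cap[\hat0,a_k]$ is an initial-segment ideal of the dual CL-shellable structure induced on $[\hat0,a_k]$, and coincides with $\langle C(c)\rangle$ for the length-$1$ chain $c=(a_k\prec\hat1)$. Writing $\CQ_k=\CQ_{k-1}\cup[\hat0,a_k]$ as a union of order ideals produces, from the inclusion–exclusion short exact sequence of the simplicial cochain complexes $C^\bullet(-;F)$ computing higher limits (every chain of $\CQ_k$ lies in $\CQ_{k-1}$ or in $[\hat0,a_k]$, both down-closed), a Mayer–Vietoris long exact sequence. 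Since $[\hat0,a_k]$ has a maximum, $H^n([\hat0,a_k];F)=0$ for $n\ge1$, so for $n\ge1$ the sequence reads
\[
H^{n-1}(\CQ_{k-1};F)\to H^{n-1}(\CQ_{k-1}\cap[\hat0,a_k];F)\to H^{n}(\CQ_k;F)\to H^{n}(\CQ_{k-1};F)\to H^{n}(\CQ_{k-1}\cap[\hat0,a_k];F).
\]

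Running this at $n=i$, by induction on $k$ with base case $H^i(\CQ_1;F)=H^i([\hat0,a_1];F)=0$: for $i=1$, the term $H^0(\CQ_{k-1}\cap[\hat0,a_k];F)=\lim_{\langle C(c)\rangle}F$, and the $F(a_k)$–component of the map into it is exactly the natural morphism $F(a_k)\to\lim_{\langle C(c)\rangle}F$ of \cref{def:stability}, which is surjective by stability at degree $1$; hence $H^1(\CQ_k;F)\hookrightarrow H^1(\CQ_{k-1};F)$ and the induction closes. For $i\ge2$, the sequence gives $H^i(\CQ_k;F)\hookrightarrow H^i(\CQ_{k-1};F)$ once $H^{i-1}(\CQ_{k-1}\cap[\hat0,a_k];F)=0$, which I would obtain by cases: if $\d([\hat0,a_k])<i$ then $\CQ_{k-1}\cap[\hat0,a_k]$ has dimension $<i-1$ and the vanishing is automatic; if $\d([\hat0,a_k])\ge i$, then restricting the chains witnessing stability to those passing through the coatom $a_k$ exhibits $F|_{[\hat0,a_k]}$ as stable at degree $i-1$, one has $1\le i-1\le\d([\hat0,a_k])-1$ and $\d([\hat0,a_k])\le\d(\CP)-1$, and the strengthened outer induction hypothesis applies to $[\hat0,a_k]$ and its initial ideal.

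In the non-finite case one still has to pass from $\{H^i(\CQ_k;F)\}_k$ to $H^i(\CP_{<\hat1};F)$: since $C^\bullet(\CP_{<\hat1};F)=\lim_k C^\bullet(\CQ_k;F)$ along surjective transition maps, a Milnor sequence $0\to{\lim}^1_k H^{i-1}(\CQ_k;F)\to H^i(\CP_{<\hat1};F)\to\lim_k H^i(\CQ_k;F)\to0$ reduces matters to the vanishing of the ${\lim}^1$ term (the right-hand term vanishes by the above), which I would control via the model-category description of \cref{thm:description_model_category_M} — e.g. by showing the relevant tower is Mittag–Leffler. The two genuine obstacles, in my view, are: (i) verifying that the dual CL-shellable structure induced on an interval $[\hat0,a_k]$ is the honest restriction of that on $\CP$, so that ``stable at degree $i$ over $\CP$'' restricts to ``stable at degree $i-1$ over $[\hat0,a_k]$'' and the subposets $\langle C(c)\rangle$ genuinely match; and (ii) making the non-finite colimit step rigorous, which is precisely where the model-category structure and the explicit (co)fibrant replacements earn their keep.
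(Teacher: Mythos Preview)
Your approach is correct and genuinely different from the paper's. The paper inducts on $i$ and works directly with the explicit fibrant replacement $\NR F$ of \cref{def:fibrant-replacement}: the key \cref{CL/lem/aux_shellable} reduces vanishing of $H^i$ to surjectivity of $\ker\NR F^{i-1}(h)\to\lim_{\langle C(h)\rangle}\ker\NR F^{i-1}$ for each coatom $h$, which for $i=1$ is literally stability and for $i\ge2$ follows from the inductive hypothesis via a diagram chase using \cref{seccion_matching}. You instead run Mayer--Vietoris on the cover $\CQ_k=\CQ_{k-1}\cup[\hat0,a_k]$ and induct on $\d(\CP)$; this is more elementary and sidesteps the fibrant-replacement machinery, at the cost of the extra $\lim^1$ step when there are infinitely many coatoms. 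Your obstacle~(i) is exactly \cref{lem:Q_CL-shellable}, whose proof shows $C'(c')=C(c'\prec\hat1)$ for $c'$ of positive length, so stability at $i$ on $\CP$ does restrict to stability at $i-1$ on $[\hat0,a_k]$; note also that the identification $\CQ_{k-1}\cap[\hat0,a_k]=\langle C(a_k)\rangle$ uses \textbf{CL2}. Your obstacle~(ii) is lighter than you fear: for $i\ge2$, the same Mayer--Vietoris sequence together with the already-established $H^{i-1}(\langle C(a_k)\rangle;F)=0$ gives surjectivity of $H^{i-1}(\CQ_k;F)\to H^{i-1}(\CQ_{k-1};F)$, hence Mittag--Leffler; for $i=1$, surjectivity of $\lim_{\CQ_k}F\to\lim_{\CQ_{k-1}}F$ follows directly from stability by the argument of \cref{auxiliar_seccion}. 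One small slip: your displayed sequence omits the summand $H^{n-1}([\hat0,a_k];F)$, which for $n=1$ equals $F(a_k)$ --- but you correctly invoke precisely this component two lines later.
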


 This is proven later as \cref{main_theorem} and its version for higher colimits as \cref{dual/main_theorem}. If $F$ is not stable at degree $i=\d(\CP)-2$, we still provide mild conditions to determine the top cohomology group $H^i(\CP\setminus \{\hat 1\};F)$, see Theorems \ref{thm:exact_sequence_for_atoms} and \ref{thm:exact_sequence_for_atoms_covariant_colimit}. We exploit these results in the following applications.\medskip

In authors' previous work \cite{CD23}, weak Mackey functors over posets are introduced by mimicking the classical notion. In this paper, we show how the machinery of dual CL-shellable posets together with the stability condition allow us to improve  \cite[Theorem B]{CD23}. This latter result implies that all higher limits of a weak Mackey functor vanish if there exists a so-called ``global'' quasi-unit. On the contrary, below we assume only the existence of ``local'' quasi-units which, in turn, implies the stability condition, and hence the vanishing of a single higher limit.

\begin{introtheorem}
\label{intro:Mackey}
Let $\CP$ be a dual CL-shellable poset, and $G\colon \CP^\op\to R\Mod$ a weak Mackey functor such that, for every chain $c=c_0\prec c_1\prec \dots \prec c_i=\hat 1$, $G$ has a quasi-unit in $\{c_0\}\cup \langle C(c)\rangle$. Then
\[
H^i(\CP\setminus \{\hat 1\};G)=0.
\]
\end{introtheorem}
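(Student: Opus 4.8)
The plan is to derive \cref{intro:Mackey} from \cref{intro:main_theorem}: as announced in the introduction, I would show that the local quasi-unit hypothesis forces the weak Mackey functor $G$ to be \emph{stable at degree $i$} in the sense of \cref{def:stability}, that is, that the natural map $G(c_0)\to\lim_{\langle C(c)\rangle}G$ is surjective for every chain $c=c_0\prec c_1\prec\dots\prec c_i=\hat 1$. Once this is known, \cref{intro:main_theorem} gives $H^i(\CP\setminus\{\hat 1\};G)=0$ directly, at least for $1\le i\le\d(\CP)-1$; for $i\ge\d(\CP)$ the group $H^i(\CP\setminus\{\hat 1\};-)$ already vanishes because the degree complex of $\CP\setminus\{\hat 1\}$ has dimension $\d(\CP)-1$, and $i\le 0$ is excluded, so there is nothing to prove outside the range of \cref{intro:main_theorem}.

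To establish stability, fix a chain $c=c_0\prec\dots\prec c_i=\hat 1$ and an object $q\in\{c_0\}\cup\langle C(c)\rangle$ at which $G$ carries a quasi-unit $u\in G(q)$; note $q\le c_0$, with $q=c_0$ allowed. Given an arbitrary compatible family $x=(x_r)_{r\in\langle C(c)\rangle}\in\lim_{\langle C(c)\rangle}G$, I would unwind the definition of quasi-unit to produce a section of $G(c_0)\to\lim_{\langle C(c)\rangle}G$: concretely, one forms an element $a\in G(q)$ out of $u$ and of the restrictions of the $x_r$ to the part of $\langle C(c)\rangle$ below $q$, and sets $y=\mathrm{tr}^{c_0}_q(a)\in G(c_0)$ using the transfer of the weak Mackey structure of $G$ (this is the identity when $q=c_0$, so no case split is needed). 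That $\res^{c_0}_r(y)=x_r$ for all $r\in\langle C(c)\rangle$ is then checked by the weak Mackey (double-coset) identity for $G$: expanding $\res^{c_0}_r\mathrm{tr}^{c_0}_q$ as a sum indexed by the common lower bounds of $q$ and $r$ inside $\langle C(c)\rangle$, the defining property of the quasi-unit collapses it to the single term $x_r$ modulo correction terms that are annihilated by the compatibility relations defining $\lim_{\langle C(c)\rangle}G$. If the earlier part of the paper already records a lemma extracting such a section from a quasi-unit in $\CS\cup\{p\}$ with $\CS\subseteq\CP_{<p}$ — the local counterpart of the global statement behind \cite[Theorem B]{CD23} — I would simply invoke it with $p=c_0$ and $\CS=\langle C(c)\rangle$.

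The main obstacle is the double-coset bookkeeping in that last verification. In contrast with the strict Mackey situation, the weak Mackey identity produces genuine error terms, and one must check that they are precisely the ones killed by the compatibility conditions of $\lim_{\langle C(c)\rangle}G$; when $q\prec c_0$ one additionally needs $\langle C(c)\rangle$ to be closed under the meet-like operation implicit in taking common lower bounds, which is exactly where the combinatorics of the recursive coatom ordering (\cref{def:recursive_coatom_ordering}) of $\CP$ enters, most likely together with an induction along $\langle C(c)\rangle$ ordered by the shelling. With that verification in hand, stability at degree $i$ holds and the theorem follows from \cref{intro:main_theorem}.
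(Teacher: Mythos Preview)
Your overall strategy is exactly the paper's: verify that $G$ is stable at degree $i$ and apply \cref{main_theorem}. The paper does not redo any Mackey bookkeeping at all; it simply notes that the restriction of $G$ to $\CR=\{c_0\}\cup\langle C(c)\rangle$ is a weak Mackey functor with a quasi-unit in the whole of $\CR$ and then invokes \cite[Theorems~B$^*$ and~C$^*$]{CD23} to conclude that $G(c_0)\to\lim_{\CR_{<c_0}}G=\lim_{\langle C(c)\rangle}G$ is surjective. This is precisely the option you flag at the end of your second paragraph, and no further shellability combinatorics is needed in this step: all of that is already absorbed into the proof of \cref{main_theorem}.

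One genuine correction to your detailed sketch: a ``quasi-unit in $\CQ$'' is not an element $u\in G(q)$ at some single object $q$. In the paper's Definition~\ref{def:weakMackeyfunctorAb_Pop}, it is the condition that the endomorphisms $\alpha(i,j)=G(j<i)\circ F(j<i)$ are $G$-linear \emph{automorphisms} for every pair $j<i$ with $i,j\in\CQ$. Consequently your proposed construction of a preimage as $\mathrm{tr}^{c_0}_q(a)$ built from a single unit element does not reflect the actual mechanism, and the ``double-coset'' and ``meet-closure'' obstacles you anticipate are not the relevant ones here. If you wish to unwind the citation rather than use it, the argument in \cite{CD23} proceeds by exploiting the invertibility of the $\alpha(i,j)$'s, not via a distinguished unit element; but for the purposes of this theorem the citation suffices.
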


This is proven as \cref{thm:Mackey}. The second application presented here is the determination of the homology of the arrangement lattice associated to a set of hyperplanes over a finite-dimensional vector space. For an arrangement with a finite number of hyperplanes, the full computation is covered by Everitt-Turner in \cite{Everitt2019,Everitt2022}. Below, we reproduce some of those results under the milder assumption that the number of hyperplanes is numerably and not necessarily finite. The proofs in those two papers are based in the deletion-restriction long exact sequence in homology and the Boolean cover of the lattice. Here, we employ dual CL-shellability for posets, stable functors, and basic linear algebra.

\begin{introtheorem}
\label{intro:hyperplane arrangement}
Let $V$ be a finite-dimensional vector space, $H$ be a numerable set of hyperplanes of $V$, and $\CL(H)$ the arrangement lattice of $H$. Then $d_{i,j}=\dim H_i(\CL(H)\setminus\{V\};\Lambda^j(-))$ is zero outside of the set $A\cup B$, where $A$ is the segment
\[
A=\{(0,j)|0\leq j\leq \dim(V)-1\}
\]
and $B$ is the truncated band
\[
B=\{(i,j)\text{ such that } 1\leq i\leq \d(\CL(H))-2 \text{ and }\d(\CL(H))-1\leq i+j\leq \dim(V)-1\}.
\]
Moreover, $d_{0,j}=\begin{pmatrix} \dim(V)\\j \end{pmatrix}$ if $0\leq j\leq \d(\CL(H))-2$, $d_{0,\dim(V)-1}=\#H$, and $d_{i,j}$ equals
\[
(-1)^{i+1}\begin{pmatrix} \dim(V)\\j \end{pmatrix} + \sum_{d=\d(\CL(H))-1-i}^{\d(\CL(H))-1} (-1)^{d-\d(\CL(H))+1+i} \begin{pmatrix} d\\j \end{pmatrix} \sum_{\substack{x\in \CL(H)\\\d(x)=d}} |\mu(x,V)|
\]
for $H$ finite, $\dim(\hat 0)=0$, $1\leq i\leq \d(\CL(H))-2$ and $i+j=\d(\CL(H))-1$.
\end{introtheorem}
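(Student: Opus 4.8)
The plan is to present $\CL(H)$ as a dual CL-shellable poset, apply the colimit version of \cref{intro:main_theorem} (that is, \cref{dual/main_theorem}) wherever the exterior-power functor is costable, and handle the remaining cells by direct linear algebra together with an Euler-characteristic computation. First I would fix the framework. For a central arrangement the opposite poset of $\CL(H)$ is a geometric lattice, hence admits a recursive atom ordering, so $\CL(H)$ itself admits a recursive coatom ordering in the sense of \cref{def:recursive_coatom_ordering}; here $\d(x)$ is the rank of the flat $x$, so $\d(\CL(H))=\dim V-\dim(\bigcap H)$, which is $\dim V$ precisely under the hypothesis $\dim(\hat 0)=0$. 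Since $\CL(H)$ is the filtered colimit of the sublattices $\CL(H')$ over finite subarrangements $H'\subseteq H$, and higher (co)limits of functors commute with such filtered colimits, the vanishing statements reduce to the case of finite $H$, and the explicit count is asserted only there. The coefficient functor is $x\mapsto\Lambda^j(x)$, the $j$-th exterior power of $x$ seen as a subspace of $V$; it is a subfunctor of the constant functor $\underline{\Lambda^j(V)}$, has $\dim\Lambda^j(x)=\binom{\d(x)+\dim\hat 0}{j}$, and every one of its structure maps is injective because $\Lambda^j$ of a subspace inclusion is injective.

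Next I would clear the easy vanishing. If $i=0$ and $j\geq\dim V$ then $\Lambda^j(x)=0$ for all flats $x\neq V$, so $d_{0,j}=0$. If $i\geq\d(\CL(H))-1$, the order complex of $\CL(H)\setminus\{V\}$ is a cone with apex $\hat 0$, and since the structure maps of $\Lambda^j$ out of $\hat 0$ are injective, the top-dimensional chains carry no cycles, so the top homology vanishes; above that degree homology vanishes for dimensional reasons. Finally, for a saturated chain $c=(c_0\prec\cdots\prec c_i=V)$ one has $\d(c_0)=\d(\CL(H))-i$, so as soon as $i+j\geq\d(\CL(H))$ either $\Lambda^j(c_0)=0$ or $\Lambda^j(c_0)$ is one-dimensional with $\Lambda^j$ vanishing on every proper flat below $c_0$; in both cases $\colim_{\langle C(c)\rangle}\Lambda^j=0$, so $\Lambda^j$ is costable at degree $i$ and \cref{dual/main_theorem} gives $d_{i,j}=0$. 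Together these give $d_{i,j}=0$ outside $A\cup B$, with the sole exception of the ``below-band'' region $i\geq 1$, $i+j\leq\d(\CL(H))-2$.

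The hard part is this below-band region, where costability really can fail: if $\langle C(c)\rangle$ is generated by several coatoms of $c_0$ lying in one common codimension-two flat, then $\colim_{\langle C(c)\rangle}\Lambda^j$ overshoots $\Lambda^j(c_0)$. The plan is to use the short exact sequence of functors $0\to\Lambda^j\to\underline{\Lambda^j(V)}\to Q^j\to 0$ over $\CP':=\CL(H)\setminus\{V\}$. Since $\CP'$ has the minimum $\hat 0$, its order complex is contractible, so $H_n(\CP';\underline{\Lambda^j(V)})$ is $\Lambda^j(V)$ for $n=0$ and $0$ otherwise; the long exact sequence then gives $H_i(\CP';\Lambda^j)\cong H_{i+1}(\CP';Q^j)$ for $i\geq 1$ and a four-term tail controlling $d_{0,j}$. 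I would equip $Q^j(x)=\Lambda^j(V)/\Lambda^j(x)$ with the natural filtration induced by the filtration of $\Lambda^j(V)$ by the subspaces $\textstyle\sum_{a\geq p}\Lambda^a(x)\wedge\Lambda^{j-a}(V)$, whose subquotients are $x\mapsto\Lambda^p(x)\otimes\Lambda^{j-p}(V/x)$ with $0\leq p<j$. Via $V/x\cong(x^\perp)^*$ these building blocks are exterior-power functors over the arrangement lattice of the dual (order-reversed) configuration, so they are governed by the inductive hypothesis, and the degree shift $H_i\cong H_{i+1}(Q^j)$ lets an induction on $\dim V$ with trivial base cases close up. This yields $d_{i,j}=0$ throughout the below-band region, as well as $d_{0,j}=\binom{\dim V}{j}$ for $j\leq\d(\CL(H))-2$; the value $d_{0,\dim V-1}=\#H$ is immediate, since $\Lambda^{\dim V-1}$ is concentrated on the coatoms of $\CL(H)$, which form an antichain.

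For the last diagonal $i+j=\d(\CL(H))-1$ with $1\leq i\leq\d(\CL(H))-2$ and $H$ finite, $\dim\hat 0=0$, all groups $H_n(\CP';\Lambda^j)$ vanish except $n\in\{0,i\}$, so $d_{i,j}$ is determined by $d_{0,j}$ and the Euler characteristic $\chi_j=\sum_n(-1)^nd_{n,j}$. I would compute $\chi_j$ from the order-complex chain complex: $\chi_j=\sum_{x\neq V}\binom{\d(x)}{j}\bigl(\sum_{k\geq 0}(-1)^k c_k(x)\bigr)$, where $c_k(x)$ counts the $k$-chains in $\CP'$ with bottom $x$; grouping chains and applying Philip Hall's theorem rewrites the inner sum as $-\mu_{\CL(H)}(x,V)$, and inserting the sign $(-1)^{\d(V)-\d(x)}$ of $\mu$ on an interval of a geometric lattice converts $\chi_j$ into the alternating sum of the unsigned Whitney numbers $\sum_{\d(x)=d}|\mu(x,V)|$ appearing in the statement; solving for $d_{i,j}$ — with the $(-1)^{i+1}\binom{\dim V}{j}$ term being exactly the $H_0$-contribution — reproduces the displayed formula. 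Alternatively, for the endpoint $i=\d(\CL(H))-2$, i.e.\ $j=1$, one can bypass the Euler characteristic and apply \cref{thm:exact_sequence_for_atoms_covariant_colimit} with the atoms being the hyperplanes. The main obstacle is the below-band induction: one must verify carefully that the quotient functors $Q^j$ and their subquotients $\Lambda^{j-p}(V/-)$ are genuinely covered by the inductive hypothesis and that the exterior-power filtration is natural over the poset; the sign and Möbius bookkeeping on the half-open interval $[x,V)$ in the last step is routine but easy to get wrong.
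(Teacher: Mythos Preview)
Your outline diverges from the paper at the key step and, as written, has a genuine gap there. For the ``below-band'' region $i\ge 1$, $i+j\le \d(\CL(H))-2$, you abandon costability, pass to the quotient $Q^j=\underline{\Lambda^j(V)}/\Lambda^j$, filter it with subquotients $G_p(x)=\Lambda^p(x)\otimes\Lambda^{j-p}(V/x)$, and propose an induction on $\dim V$. But the $G_p$ are not of the form ``$\Lambda^{j'}$ on a smaller arrangement lattice'': the poset is still $\CL(H)$, the same $V$ is involved, and the factor $\Lambda^{j-p}(V/x)\cong\Lambda^{j-p}(x^\perp)^*$ lives on $\CL(H)^{\op}$, which is a geometric lattice but not an arrangement lattice in general. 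So the inductive hypothesis does not apply, and you have not indicated how to compute $H_*(\CL(H)\setminus\{V\};G_p)$ otherwise. You flag this as the ``main obstacle'', but nothing in the proposal resolves it.

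The paper takes the opposite route: it shows that costability does \emph{not} fail in the below-band region, provided one uses a carefully chosen recursive coatom ordering. The crucial extra ingredient is the property \textbf{CL3} of \cref{lem:recursive_coatom_ordering_L}: at each step the ordering has an initial segment that is a basis of $\langle\CL(H)_{\prec x}\rangle_\KK$. With this, for every unrefinable $\hat 1$-chain $c$ the set $C(c)$ either sits inside, or contains, a basis of coatoms of $c_0$; in both cases an explicit computation with dual bases (Steps~1--3 in the proof of \cref{thm:hyperplane arrangement}) shows that $\colim_{\langle C(c)\rangle}\Lambda^j\to\Lambda^j(c_0)$ is injective whenever $j\le\d(c_0)-2$, i.e.\ $i+j\le\d(\CL(H))-2$. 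Your intuition that ``several coatoms lying in one common codimension-two flat'' break costability is correct for a generic ordering, but the point is precisely that the \textbf{CL3} ordering avoids this. The same Steps~1--3 also give the isomorphism $\colim_{\CL}\Lambda^j\cong\Lambda^j(V)$ used for $d_{0,j}$.

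Two smaller remarks. First, in your easy-vanishing paragraph you write ``as soon as $i+j\ge\d(\CL(H))$'', but the correct threshold is $i+j\ge\dim V$; since $\dim(c_0)=\dim V-i$ (not $\d(\CL(H))-i$) your conclusion ``$\Lambda^j(c_0)=0$ or one-dimensional'' fails when $\dim(\hat 0)>0$. Second, your Euler-characteristic computation for the diagonal $i+j=\d(\CL(H))-1$ is a legitimate alternative to the paper's approach, which instead iterates \cref{thm:exact_sequence_for_atoms_covariant_colimit} and \cref{cor:exact_sequence_for_atoms_and_stable_functor_covariant_colimit} on successive truncations $\CL_{\ge d}$; but both routes require the below-band vanishing as input, so the gap above blocks this part too.
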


Finally, as third application, we provide an integral homology decomposition of the classifying space of the Bianchi group $\Gamma_d=\PSL(\mathcal O_d)$ for $\d=\Bianchids$, where $\mathcal O_d$ is the ring of integers of the quadratic field $\QQ(\sqrt{-d})$. The following theorem generalizes \cite[Theorem D]{CD23}.

\begin{introtheorem}\label{thm:integral_homology_decomposition_Bianchi}
For $d=\Bianchids$, there exists a poset $\CP_d$ of proper subgroups of $\Gamma_d$ and a homotopy equivalence
\[
\hocolim_{U\in \CP_d} B(U)\stackrel{\simeq}\longrightarrow B(\Gamma_d).
\]
\end{introtheorem}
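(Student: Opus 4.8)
The plan is to obtain the decomposition from the action of $\Gamma_d$ on a low-dimensional contractible model of $\mathbb{H}^3$ and then to read off the indexing poset together with the diagram. Concretely, for each $d\in\{1,2,7,11\}$ I would fix an explicit $\Gamma_d$-equivariant deformation retract $X_d\subseteq\mathbb{H}^3$ — the well-rounded (Mendoza) retract — which for these class-number-one Euclidean discriminants is $2$-dimensional (matching $\mathrm{vcd}(\Gamma_d)=2$) and carries a proper, cocompact, cellular $\Gamma_d$-action; such retracts are known explicitly from the work of Bianchi, Swan, Mendoza and Schwermer–Vogtmann. Since $X_d$ is contractible, the Borel construction yields a homotopy equivalence $B\Gamma_d\simeq E\Gamma_d\times_{\Gamma_d}X_d$, and after a suitable iterated barycentric subdivision I may assume the $\Gamma_d$-action on $X_d$ is admissible, i.e. the stabilizer of a cell fixes that cell pointwise.

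Next I would invoke the standard equivariant (isotropy) homotopy decomposition for an admissible action on a contractible complex:
\[
B\Gamma_d\;\simeq\;E\Gamma_d\times_{\Gamma_d}X_d\;\simeq\;\hocolim_{\sigma\in X_d/\Gamma_d} B\big((\Gamma_d)_\sigma\big),
\]
where the homotopy colimit is indexed by the face poset of the quotient complex $X_d/\Gamma_d$, the value at $\sigma$ is the classifying space of a stabilizer $(\Gamma_d)_\sigma$ of a chosen representative cell, and the structure maps are the inclusions $(\Gamma_d)_\tau\hookrightarrow(\Gamma_d)_\sigma$ whenever $\sigma$ is a face of $\tau$. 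Because $\Gamma_d$ is infinite and acts cocompactly on the $2$-complex $X_d$, every cell stabilizer is finite, hence a proper subgroup of $\Gamma_d$. It then remains to run the explicit combinatorics: compute $X_d/\Gamma_d$ and all cell stabilizers from the known fundamental domains for $\Gamma_d$ in $\mathbb{H}^3$, check that $\sigma\mapsto(\Gamma_d)_\sigma$ descends to an order-embedding of the orbit poset into the subgroup lattice of $\Gamma_d$ (subdividing once more if stabilizers happen to coincide along a face), and define $\mathcal{P}_d$ to be the resulting finite poset of proper subgroups ordered by inclusion, with diagram $U\mapsto BU$. This reproduces and extends the case treated in \cite{CD23}.

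The main obstacle is this last, computational step: pinning down $X_d$, its quotient, and every cell stabilizer precisely, and verifying that the orbit poset is genuinely a poset of \emph{subgroups} (no inversions and no unwanted coincidences of stabilizers), which is exactly where restricting to $d=1,2,7,11$ is used — for larger or more torsion-laden discriminants the quotient complex and its isotropy groups are far less tractable and the clean ``poset of subgroups'' description may fail. Finally, although it is not needed for the statement itself, once $\mathcal{P}_d$ is in hand one verifies that it is dual CL-shellable (\cref{def:recursive_coatom_ordering}), so that \cref{intro:main_theorem} and the stability condition can be fed into the Bousfield–Kan spectral sequence of the homotopy colimit above, turning the decomposition into an effective computation of $H_*(B\Gamma_d;\mathbb{Z})$.
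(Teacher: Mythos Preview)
Your approach is correct and would prove the theorem, but it takes a genuinely different route from the paper. You derive the decomposition geometrically: contractibility of the Mendoza retract $X_d$ together with the standard isotropy decomposition of $E\Gamma_d\times_{\Gamma_d}X_d$ yield the homotopy equivalence outright, and your $\CP_d$ is (the image in the subgroup lattice of) the orbit poset of cells, whose stabilizers are all finite. The paper instead writes down $\CP_d$ by hand from known presentations of $\Gamma_d$ --- and for $d=2,7,11$ these posets contain \emph{infinite} subgroups such as $\langle a,m,s,v\rangle\cong K*_{C_2}A_4$, so they are not isotropy posets of any proper cocompact action and are genuinely different from yours --- and then reduces the statement, via the fibration \eqref{equ:fibration_to_BGamma} from \cite{DiazRamos2009,CD23}, to proving that an explicit functor $H\colon\CP_d\to\Ab$ is $\colim$-acyclic. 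That acyclicity is precisely where the machinery of the paper enters: $\CP_d\cup\{\hat1\}$ is equipped with a recursive coatom ordering and $H$ is verified to be co-stable so that \cref{dual/main_theorem} applies for $d=2,7,11$, while for $d=1$ the exact sequence of \cref{thm:exact_sequence_for_atoms_covariant_colimit} is invoked followed by a short explicit check that the connecting map is injective. Your argument is therefore shorter and more classical, but it sidesteps the point of the section, which is to exhibit these Bianchi decompositions as a nontrivial application of the stability/shellability framework; in particular your closing remark that CL-shellability is ``not needed for the statement itself'' is true of your proof but is the opposite of what happens in the paper's, where it is the main ingredient.
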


Here, the morphisms in $\CP_d$ consists of the inclusions between the given subgroups. To prove this result, we show first that the homology of the homotopy fiber of the map above is given by the homology of certain functor $H$. Then we equip $\CP_d$ with a structure of dual CL-shellable poset and show that $H$ is stable with respect to this structure at every possible degree. As a direct consequence of this theorem, we can determine the homology
and cohomology of $\Gamma_d$ using the classical Bousfield–Kan spectral sequence, see \cite[Example 4.2]{CD23} for the case of $\Gamma_1$. These cohomology groups have been determined elsewhere via other methods, see \cite{BerkoveIntegral}.\bigskip

\textbf{Outline of the paper:} Section \ref{sec:Background} contains background on bounded and filtered posets, as well as the definition of dual CL-shellable posets and some of their categorical properties. In Section \ref{section:(co)homology}, we consider certain model categories structures in the categories of functors indexed by a filtered poset. Then we consider explicit (co)fibrant replacements and show that they determine (co)homology, see Propositions \ref{prop:hig-lim-via-RF} and \ref{prop:hig-colim-via-QF}. The (co)stability property at a given degree is introduced in Section \ref{sec:vanishing} and, in this same section,   \cref{intro:main_theorem} and its dual are proven employing the earlier (co)fibrant replacements. The top (co)homology groups of a functor over a dual CL-shellable poset are studied in Section \ref{sec:non-vanishing_stable_functor}. In the final section, the aforementioned applications are developed.
\bigskip

\noindent{\bf Acknowledgments.} We would like to thanks Russ Woodroofe for his comments and historical clarifications, which helped us improve the introduction of the paper. First author supported by Universidad de Málaga grant G RYC-2010-05663, Comissionat per Universitats i Recerca de la Generalitat de Catalunya (grant No. 2021-SGR-01015), and  MICINN grants BES-2017-079851, PID2020-116481GB-I00, and PID2023-149804NB-I00. Both authors supported by MICINN grant PID2020-118753GB-I00. Second author supported by Junta de Andalucía grant ProyExcel00827.

\section{Background on CL-shellable posets}
\label{sec:Background}
Given a poset $\CP=(\CP,\le)$, we denote by  $\prec$ the \emph{cover relation} in $\CP$, i.e., $p\prec q$ if and only if $p< q$ and
\[
\forall r\in \CP, p\le r\le q\Rightarrow p=r\text{ or }q=r.
\]
A chain $c$ of \emph{length} $n$ in $\CP$ is a sequence of totally ordered elements,
\[ 
c_0<c_1<\dots<c_n.
\]
We say that $c$ is a \emph{$c_n$-chain} to emphasise its largest element, and such a chain is \emph{unrefinable} if $c_i\prec c_{i+1}$ for any $0\leq i\leq n-1$. The poset $\CP$ is \emph{filtered} if the \emph{degree} map $\d\colon \CP\to \NN$ given by 
\[
\d(p)=\max\{n\mid c_0\prec c_1\prec \dots \prec c_n=p\}
\]
is strictly order preserving, and we define the \emph{degree} of a filtered poset $\CP$ as the maximum of its degrees
\[
\d(\CP)=\max\{\d(p)\mid p\in \CP\}\in \NN\cup \{\infty\}.
\] 
A filtered poset is said to be \emph{bounded} if it has a maximum $\hat 1$ and a minimum $\hat 0$. In that case, we denote by  $\overline \CP$ the subposet $\CP\setminus\{\hat0,\hat 1\}$, and it is immediate that
\[
\d(p)=\max\{n\mid \hat 0= c_0\prec c_1\prec \dots \prec c_n=p\},
\]
and that $\d(\CP)=\d(\hat1)$. We can also define a \emph{co-degree} function $\cd\colon \CP\to \NN$ defined by
\[
\cd(p)=\min\{n\mid  p=c_0\prec c_1\prec\dots \prec c_n=\hat 1\}.
\]
A poset is said to be \emph{pure} if all maximal chains have the same length. In that case, it satisfies the Jordan-Dedekind chain condition \cite[p.11]{Birkhoff}, $\cd$ is order-reversing, and, for all $p\in \CP$,
\[
\d(p)+\cd(p)=\d(\CP).
\]
These conclusions do not hold in general for a non-pure poset.
An \emph{atom} $a$ of a bounded poset $\CP$ is an element that covers the minimum $\hat 0 \prec a$. Dually, a \emph{coatom} $h$ is an element covered by the maximum $h\prec \hat 1$. Thus, atoms (resp. coatoms)  are the objects of degree (resp. co-degree) $1$. For every $p\in \CP$, we denote by $\CP_{\le p}$ the sub-poset of $\CP$ consisting of those $q\in \CP$ such that $q\le p$, 
\[
\CP_{\le p}=\{q\in \CP\mid q\le p\}.
\]
Similarly, we define $\CP_{\ge p}$, $\CP_{> p}$, $\CP_{< p}$ and $\CP_{\prec p}$.
If $Q$ is a subset of $\CP$, we denote by $\langle Q\rangle_\CP $ the sub-poset of $\CP$ whose elements are those that are least than some element in $Q$,
\[
\langle Q\rangle_\CP=\{p\in \CP\mid p\le q\text{ for some }q\in Q\},
\]
and we write $\langle Q\rangle$ if the poset $\CP$ is understood. Next, we generalise the notion of dual CL-shellable posets 
for bounded posets (not necessarily finite) via the characterisation given by Björner and Wachs \cite[Definition 5.10, Theorem 5.11]{BWnonpure}. So, given a bounded poset $\CP$, a \emph{coatom ordering} is a family $\{\lleq_c\}_c$ of well-orders $\lleq_c$ of $\CP_{\prec c_0}$,
where $c$ ranges over the family of all unrefinable $\hat1$-chains. In addition, for such a chain $c=c_0\prec c_1\prec \ldots\prec \hat 1$, and $c'=c_1\prec \ldots\prec \hat 1$, we define the set
\[
C(c)=\{x\prec c_0\mid x< h\prec c_1\mbox{ for some } h\ll_{c'} c_0\}
\]
if the length of $c$ is positive, and $C(\hat 1)=\emptyset$ otherwise. Note that $C(c)$ is also empty if either $c_0=\hat 0$ or $c_0$ equals the least element in $\ll_{c'}$. For a coatom $h$, we write ${C(h)=C(h\prec \hat 1)}$ for simplicity,  and record that
\begin{equation}\label{equ:C(h)_h_cotaom}
C(h)=\{x\prec h\mid x< h'\prec \hat{1}\mbox{ for some } h'\ll_{\hat 1} h\}.
\end{equation}
\begin{defn}
\label{def:recursive_coatom_ordering}
For $\CP$ be a bounded poset, a \emph{recursive coatom ordering} is a coatom ordering  $\{\ll_c\}_c$ such that, for every unrefinable $\hat1$-chain $c$,
\begin{description}
\item[CL0] Every initial segment of the linear order $\ll_c$ is either finite or isomorphic to $(\NN,\le)$. 
\item[CL1] The set $C(c)$ is an initial segment of the linear order $\ll_c$.
\item[CL2] For $h'\ll_c h\in \CP_{\prec c_0}$ and $p\in \CP_{<h}\cap \CP_{< h'}$, there exist $h''\in \CP_{\prec c_0}$ and $q\in \CP_{\prec h}$ with $h''\ll_c h$ and $p \le q< h''$.
\end{description}
In that case, we say that the pair $(\CP,\ll)$ is \emph{dual CL-shellable}.
\end{defn}
We say that $\CP$ is \emph{dual CL-shellable} if $(\CP,\ll)$ is \emph{dual CL-shellable}  for some $\ll$. If $\CP$ is finite, then \textbf{CL0} trivially holds, and our notion of dual CL-shellable poset coincides with that in \cite{BWnonpure}. Now, we present some properties of CL-shellable posets that we will need later.

\begin{lem}
\label{lema:Q_final_category}
If $\CP$ is a dual CL-shellable poset, then the subposet of objects of codegrees $1$ and $2$ is final in $\CP$.
\end{lem}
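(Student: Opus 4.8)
The plan is to check directly the combinatorial condition defining finality. Write $\CQ\subseteq\CP$ for the subposet of all elements of codegree $1$ or $2$; then the claim reduces to showing that for every $p\in\CP$ with $p\neq\hat 1$ the subposet $\CQ_{\geq p}=\{x\in\CQ\mid x\geq p\}$ is nonempty and connected. Nonemptiness is immediate: if $\cd(p)=1$ then $p$ is a coatom and $p\in\CQ_{\geq p}$, while if $\cd(p)\geq 2$ the penultimate term $c_{n-1}$ of any unrefinable chain $p=c_0\prec c_1\prec\dots\prec c_n=\hat 1$ is a coatom with $p\leq c_{n-1}$, so $c_{n-1}\in\CQ_{\geq p}$. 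Connectedness in the case $\cd(p)\leq 2$ is also immediate, since then $p$ itself lies in $\CQ_{\geq p}$ and is a least element of it.

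So the content is the case $\cd(p)\geq 3$, where every element of $\CQ_{\geq p}$ is strictly above $p$ and is either a coatom of $\CP$ or of codegree $2$. I would first reduce to the coatoms: if $q\in\CQ_{\geq p}$ has $\cd(q)=2$, pick a coatom $h$ with $q\prec h\prec\hat 1$; then $h>q\geq p$, so $h\in\CQ_{\geq p}$, and the relation $q\prec h$ joins $q$ to a coatom inside $\CQ_{\geq p}$. Hence it suffices to show that the coatoms of $\CP$ lying above $p$ are pairwise connected using only elements of $\CQ_{\geq p}$.

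To connect them I would apply condition \textbf{CL2} to the trivial $\hat 1$-chain $c=(\hat 1)$, for which $\CP_{\prec c_0}$ is exactly the set of coatoms of $\CP$ and $\ll_c$ is the well-order $\ll_{\hat 1}$. As $\ll_{\hat 1}$ well-orders all coatoms, the coatoms above $p$ have a $\ll_{\hat 1}$-least element $g$. For a coatom $h\neq g$ above $p$ we have $g\ll_{\hat 1}h$ and $p<g$, $p<h$ (strictly, since $p$ is not a coatom), so \textbf{CL2} with $h'=g$ and the element $p\in\CP_{<h}\cap\CP_{<g}$ produces a coatom $h''\ll_{\hat 1}h$ and an element $q\prec h$ with $p\leq q<h''$. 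Then $q\prec h\prec\hat 1$ forces $\cd(q)\leq 2$, and $q\geq p$, so $q\in\CQ_{\geq p}$; also $h''>q\geq p$, so $h''$ is again a coatom above $p$; and $q\prec h$ together with $q<h''$ gives a path from $h$ to $h''$ inside $\CQ_{\geq p}$. Thus every coatom above $p$ other than $g$ is joined inside $\CQ_{\geq p}$ to some $\ll_{\hat 1}$-smaller coatom above $p$, and since $\ll_{\hat 1}$ is well-founded, iterating reaches $g$ in finitely many steps; hence all coatoms above $p$, and therefore all of $\CQ_{\geq p}$, lie in one connected component.

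The only genuine obstacle is this last step — recognising that \textbf{CL2} for the \emph{trivial} chain $(\hat 1)$ is precisely the device linking a coatom of $\CP$ to a $\ll_{\hat 1}$-earlier one through an intermediate element $q$ with $q\prec h\prec\hat 1$, and then noting that the output stays in $\CQ_{\geq p}$ for free, since $q\geq p$ forces $h''>q\geq p$ and $q\prec h\prec\hat 1$ forces $\cd(q)\leq 2$. Everything else is bookkeeping, and conditions \textbf{CL0} and \textbf{CL1} are not needed here.
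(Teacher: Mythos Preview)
Your proof is correct and follows essentially the same approach as the paper's: reduce to connecting coatoms above $p$, then apply \textbf{CL2} to the trivial $\hat 1$-chain to link each such coatom to a $\ll_{\hat 1}$-smaller one through a codegree-$2$ element above $p$. The paper organizes the descent slightly differently---building a finite closure set $H'$ and invoking \textbf{CL0} for its finiteness---whereas you descend directly to the $\ll_{\hat 1}$-least coatom above $p$ using only well-foundedness of the well-order; your observation that \textbf{CL0} is not needed for this argument is correct.
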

\begin{proof}
Let $\{\ll_c\}_c$ be a recursive coatom ordering for $\CP$, $\CQ$ the subposet of objects of codegrees $1$ and $2$, and $\CH=\CP_{\prec \hat 1}\subseteq \CQ$ the set of coatoms of $\CP$.  For simplicity we denote by $\ll$ the linear order $\ll_{\hat1}$. We need to prove that for every $p\in \CP$, the set $\CQ\cap\CP_{\geq p}$ is connected. Thus, let $h_0,h_1\in \CQ\cap \CP_{\geq p}$ and, without loss of generality, assume that $h_0, h_1\in \CH\cap\CP_{\geq p}$. We show that $h_0$ and $h_1$ lie in the same connected component in $\CQ \cap \CP_{\geq p}$.

Set $H=\{h_0,h_1\}$. If $h_0=h_1$ we are done. Otherwise, we note that $H\subseteq \CH\cap \CP_{\geq p}$ and we repeat the following process: apply \textbf{CL2} to any pair of distinct elements $h,h'\in H$ with $h'\ll h$, and add the resulting element $h''\in \CH\cap \CP_{\geq p}$ to this set. As the set $\{h\in \CH\mid h\ll h_0\text{ or }h_1\}$ is finite by \textbf{CL0}, we obtain a set $H'\subseteq \CH\cap \CP_{\geq p}$ which contains $H$ and is closed under this process. Now, by construction, it is easy to see inductively that the subset of the $n$ smallest elements in $H'$ is connected in $\CQ\cap \CP_{\ge p}$ for $n=1,2,3,\ldots,|H'|$.
\end{proof}

\begin{defn}
\label{def:Q_compatible_recursive_ordering}
Let $(\CP,\ll)$ be a dual CL-shellable pair, $p\in \CP$, and $Q\subset P_{\prec p}$. For an unrefinable $\hat 1$-chain $c$ with $c_0=p$, we say that $Q$ is \emph{$c$-compatible} if $Q$ is an initial segment in $\ll_{c}$. We say that $Q$ is \emph{compatible with the recursive coatom ordering} if $Q$ is $c$-compatible for some $c$.
\end{defn}
\begin{lem}
\label{lem:Q_CL-shellable}
Let $\CP$ be a dual CL-shellable poset, $p\in \CP$, and $Q\subset \CP_{\prec p}$ compatible with the recursive coatom ordering. Then, $\langle Q\rangle\cup\{p\}$ is a dual CL-shellable poset of degree $\d(p)$.
\end{lem}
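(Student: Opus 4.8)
The plan is to equip $\CQ' := \langle Q\rangle\cup\{p\}$ with an explicit recursive coatom ordering inherited from $(\CP,\ll)$ and then verify the three axioms $\mathbf{CL0}$, $\mathbf{CL1}$, $\mathbf{CL2}$. First I would set up the combinatorial picture: since $Q$ is $c$-compatible for some unrefinable $\hat 1$-chain $c$ with $c_0=p$, the set $Q$ is an initial segment of the well-order $\ll_c$ on $\CP_{\prec p}$. In $\CQ'$ the element $p$ is the maximum, the minimum is that of $\langle Q\rangle$ (the minimum $\hat 0$ of $\CP$ provided $Q\neq\emptyset$, which we may assume since otherwise $\CQ'=\{p\}$ is trivially CL-shellable of degree $\d(p)=0$ — wait, $\d(p)$ need not be $0$; rather when $Q=\emptyset$ we have $\langle Q\rangle=\emptyset$ and $\CQ'=\{p\}$, a one-point poset, which one declares CL-shellable). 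The coatoms of $\CQ'$ are exactly the elements of $Q$, and for each element $q\in Q$ the interval $(\CQ')_{\le q}$ coincides with $\CP_{\le q}$ because $\langle Q\rangle$ is downward closed; hence the lower intervals of $\CQ'$ are genuine intervals of $\CP$. This is the key structural observation that lets the recursive structure descend.

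Next I would define the candidate recursive coatom ordering on $\CQ'$. For the top chain $\hat 1_{\CQ'}\prec$-chains: an unrefinable $p$-chain $c'$ in $\CQ'$ is the same data as an unrefinable $p$-chain in $\CP$ truncated at $p$ (equivalently, an unrefinable $\hat 1$-chain of $\CP$ through $p$, restricted). Given such a $c'$, one must order the coatoms of $(\CQ')_{\prec q_0}$ for each node $q_0$ along $c'$; for the top node this is ordering $Q$ itself, and here I use the restriction of $\ll_c$ to $Q$ — which is a well-order on $Q$ because $Q$ is an initial segment of $\ll_c$. For lower nodes, one recursively uses the orders $\ll$ already present in $\CP$ (they only involve elements of $\langle Q\rangle$, which sit inside $\CP$ unchanged). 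Then: $\mathbf{CL0}$ for the top order holds because an initial segment of $\ll_c\restriction Q$ is an initial segment of $\ll_c$, hence finite or $\cong(\NN,\le)$ by $\mathbf{CL0}$ for $(\CP,\ll)$; for lower nodes it is inherited verbatim. $\mathbf{CL1}$ requires checking that the set $C(c')$ computed inside $\CQ'$ is an initial segment of the chosen order; here one observes that the defining condition ``$x< h'\prec p$ for some $h'\ll_{c'}q_0$'' is computed using data entirely within $\langle Q\rangle$ and the order $\ll_c\restriction Q$, so $C_{\CQ'}(c')=C_\CP(c)\cap Q$, which is an intersection of two initial segments of $\ll_c$, hence an initial segment. $\mathbf{CL2}$ is the patching axiom: given $h'\ll h$ in $Q$ and $r\in (\CQ')_{<h}\cap(\CQ')_{<h'}=\CP_{<h}\cap\CP_{<h'}$, apply $\mathbf{CL2}$ in $(\CP,\ll)$ to obtain $h''\in\CP_{\prec p}$ with $h''\ll_c h$ and $r\le q'<h''$; since $h''\ll_c h$ and $h\in Q$ with $Q$ an initial segment, $h''\in Q$, so $h''$ is a coatom of $\CQ'$ as required — and $q'\prec h''$ holds in $\CP$, hence in $\CQ'$ since $(\CQ')_{\le h''}=\CP_{\le h''}$.

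Finally, for the degree claim $\d(\CQ')=\d(p)$: the maximum of $\CQ'$ is $p$, and unrefinable chains from the minimum of $\CQ'$ up to $p$ are precisely the unrefinable $\hat 0$-to-$p$ chains of $\CP$ (all of which lie in $\CP_{\le p}\subseteq\CQ'$), so $\d_{\CQ'}(p)=\d_\CP(p)$, and no element of $\CQ'$ has larger degree since $p$ is the maximum. I expect the main obstacle to be bookkeeping rather than conceptual: one must be careful that ``unrefinable $\hat 1$-chain of $\CQ'$'' translates correctly to chains of $\CP$ through $p$ so that the recursively-specified orders $\ll_{c'}$ on the various $(\CQ')_{\prec q_0}$ genuinely come from the ambient structure, and that the degenerate cases ($Q=\emptyset$, or $q_0$ equal to the minimum or to the $\ll$-least coatom, where $C(c')=\emptyset$) are handled — but in each of these the relevant set is empty and the axioms hold vacuously. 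The substantive point, reused throughout, is that $\langle Q\rangle$ is a down-set of $\CP$ so every proper lower interval of $\CQ'$ is literally an interval of $\CP$, and that ``initial segment of $\ll_c$ intersected with the initial segment $Q$'' is again an initial segment.
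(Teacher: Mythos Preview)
Your approach matches the paper's: define $\ll'_{c'}$ on $\CQ'_{\prec c'_0}$ as the restriction of $\ll_{c'\prec c}$ (concatenating with the fixed ambient chain $c$), then verify $\mathbf{CL0}$--$\mathbf{CL2}$. The paper dispatches the case $c'\neq p$ in one line (since then $\CQ'_{\prec c'_0}=\CP_{\prec c'_0}$ and the axioms are inherited verbatim from $\CP$) and the case $c'=p$ by noting that $\mathbf{CL0}$ and $\mathbf{CL1}$ are vacuous while $\mathbf{CL2}$ follows from $Q$ being an initial segment --- exactly your reasoning. One notational slip: your formula $C_{\CQ'}(c')=C_\CP(c)\cap Q$ is not the right identity; for a length-$1$ chain $c'=q_0\prec p$ the correct statement is $C_{\CQ'}(c')=C_\CP(c'\prec c)$, and for the length-$0$ chain $c'=p$ one has $C_{\CQ'}(p)=\emptyset$ by definition.

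There is, however, a gap in your degree argument. You assert $\CP_{\le p}\subseteq\CQ'$, but this is false in general: an element $r\prec p$ with $r\notin Q$ need not lie below any element of $Q$, hence $r\notin\langle Q\rangle$. Consequently not every unrefinable $\hat 0$-to-$p$ chain of $\CP$ lies in $\CQ'$, and your justification of $\d(\CQ')=\d_\CP(p)$ breaks down. The inequality $\d(\CQ')\le\d_\CP(p)$ is immediate (chains in $\CQ'$ are chains in $\CP$); the paper's own proof does not address the reverse inequality either, and in the applications of the lemma elsewhere in the paper only the upper bound is actually used.
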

\begin{proof}
Let $c$ be an unrefinable chain from $p$ to $\hat 1$ such that $Q$ is an initial segment in $\ll_c$ and write $\CP'=\langle Q\rangle\cup\{p\}$. If $c'$ is an unrefinable $p$-chain in $\CP'$, we define the linear order $\ll'_{c'}$ as the restriction to $\CP'_{\prec c'_0}\subset \CP_{\prec c'_0}$ of the linear order $\ll_{c'\prec c}$, where 
\[
c'\prec c=c'_0\prec \dots \prec p\prec c_1\prec \dots \prec \hat 1.
\]
Then $\{\ll'_{c'}\}_{c'}$ is a recursive coatom ordering for $\CP'$: If $c'\neq p$, then $\CP'_{\prec c'_0}= \CP_{\prec c'_0}$ and the axioms hold as they are valid for $\{\ll_c\}_c$. If $c'=p$, then \textbf{CL0} and \textbf{CL1} trivially hold and \textbf{CL2} holds because $Q$ is an initial segment in $\ll_p$.
\end{proof}


\section{(Co)homology of functors}\label{section:(co)homology}
As shown in the authors' previous work \cite{CD23}, higher (co)limits can be described in terms of (co)fibrant objects. In this section, we describe the cohomology (resp. homology) of a filtered poset $\CP$ with coefficients in a covariant (resp. contravariant) functor $F\colon \CP \to R\text{-}\mathrm{Mod}$ in terms of cofibrant (resp. fibrant) replacements, which we construct explicitly.

\subsection{Cohomology}
Let $\CP$ be a filtered poset and $R$ be a commutative ring with unit. For functors in $\Fun(\CP^\op,R\Mod)$, taking limit defines a functor,
$
\lim\colon \Fun(\CP^{op},R\Mod)\to R\Mod
$,
that is left exact functor but is not right exact, so they can be right-derived. Higher limits of a functor $F$ are the derived functors of $\lim$ evaluated in $F$, and we will denote it by
\[
H^*(F;\CP):=R^*\lim(-)(F). 
\]
The following statement is a generalization of the dual version of \cite[Theorem 2.1]{CD23} in which higher limits can be computed through fibrant replacements.

\begin{thm}
\label{thm:description_model_category_Mop}
Let $R$ be a commutative ring with unit and $\CP$ be a filtered poset. There exists a model category structure in the category of functors $\Fun(\CP^\op,\Ch(R))$ in which
\begin{enumerate}
\item a natural transformation $\eta\colon F\to G$ is a weak equivalence if, for every $p\in \CP$, the morphism  $\eta_p\colon F(p)\to G(p)$ is a quasi-isomorphism, that is $H^*(\eta_p)$ is an isomorphism;
\item a functor $F\colon \CP^\op\to \Ch(R)$ is fibrant if the natural map $F(p)\to \lim_{\CP_{<p}} F$ is a degree-wise epimorphism; and 
\item a functor $F\colon\CP^\op\to R\Mod$ satisfies that 
\[
H^*(\CP; F)=H^*({\lim}_\CP \NR F),
\]
where ${\NR F\colon \CP^\op\to \Ch(R)}$ is any fibrant replacement of $F$.
\end{enumerate}
\end{thm}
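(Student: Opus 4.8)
The plan is to realise the asserted model structure as the Reedy model structure associated to the inverse category $\CP^{\op}$, and then to match its derived limit with the classical higher limits. Since $\CP$ is filtered, the degree map $\d\colon\CP\to\NN$ makes $\CP$ a direct category, so $\CP^{\op}$ is an inverse category whose object-degree function takes values in the well-ordered set $\NN$. I would endow $\Ch(R)$ with its projective model structure (weak equivalences the quasi-isomorphisms, fibrations the degreewise epimorphisms); because $\Ch(R)$ is bicomplete, the Reedy model structure on $\Fun(\CP^{\op},\Ch(R))$ exists, and by construction its weak equivalences are the objectwise quasi-isomorphisms, which is item (1). For item (2) I would identify the matching object of $F$ at $p\in\CP$: the matching category of $\CP^{\op}$ at $p$ is $(\CP_{<p})^{\op}$, so $M_pF=\lim_{\CP_{<p}}F$ (limits of complexes being formed degreewise), and $F$ is Reedy fibrant precisely when each relative matching map $F(p)\to\lim_{\CP_{<p}}F$ is a fibration of $\Ch(R)$, i.e.\ a degreewise epimorphism. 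This is the one-sided ``inverse'' analogue of the construction underlying \cite[Theorem~2.1]{CD23}, so part of the work is checking that the non-finiteness of $\CP$ introduces no new difficulty, which it does not because the object-degrees still lie in $\NN$.

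Next I would verify that $\lim\colon\Fun(\CP^{\op},\Ch(R))\to\Ch(R)$ is right Quillen. Its left adjoint is the constant-diagram functor, which sends a (trivial) cofibration of $\Ch(R)$ to the objectwise-constant natural transformation on it; for an inverse Reedy category the Reedy cofibrations are exactly the objectwise cofibrations, so this is a (trivial) Reedy cofibration and $\mathrm{const}\dashv\lim$ is a Quillen pair. Consequently $\lim$ admits a total right derived functor, obtained by applying $\lim$ to any fibrant replacement, and, by Ken Brown's lemma, $\lim$ carries weak equivalences between Reedy fibrant objects to quasi-isomorphisms; in particular $H^{\ast}(\lim_{\CP}\NR F)$ is independent of the chosen fibrant replacement $\NR F$ of $F$.

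The remaining --- and decisive --- step is to identify $H^{\ast}(\lim_{\CP}\NR F)$ with $H^{\ast}(\CP;F)=R^{\ast}\lim(F)$, the classical right derived functor of $\lim\colon\Fun(\CP^{\op},R\Mod)\to R\Mod$. The homotopy category of the model structure above is the derived category of the abelian functor category $\Fun(\CP^{\op},R\Mod)$, which has enough injectives, so I would take an injective resolution $F\to I^{\bullet}$ there and check that, regarded in $\Fun(\CP^{\op},\Ch(R))$, it is a fibrant replacement of $F$: it is an objectwise quasi-isomorphism since $I^{\bullet}(p)$ resolves $F(p)$ for each $p$, and it is Reedy fibrant because every injective functor $I^{n}$ satisfies $I^{n}(p)\twoheadrightarrow\lim_{\CP_{<p}}I^{n}$. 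For the last point I would use that injectives of $\Fun(\CP^{\op},R\Mod)$ are retracts of products $\prod_{p}(i_p)_{\ast}J_p$ of right Kan extensions of injective $R$-modules along the point inclusions $\{p\}\hookrightarrow\CP^{\op}$, then note that $(i_p)_{\ast}J$ is the functor equal to $J$ on $\CP_{\geq p}$ and to $0$ elsewhere, whose matching maps are visibly surjective, and finally that products and retracts of Reedy fibrant objects are Reedy fibrant. This gives $H^{\ast}(\lim_{\CP}\NR F)=H^{\ast}(\lim_{\CP}I^{\bullet})=R^{\ast}\lim(F)=H^{\ast}(\CP;F)$.

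I expect this last identification to be the main obstacle: one must reconcile the model-categorical derived functor of $\lim$ with the classical higher-limit functor that \emph{defines} $H^{\ast}(\CP;F)$, and the crux is precisely the verification that injective objects of $\Fun(\CP^{\op},R\Mod)$ are Reedy fibrant, i.e.\ that the matching maps $I^{n}(p)\to\lim_{\CP_{<p}}I^{n}$ are surjective. A secondary point worth addressing is the existence of the Reedy structure (and of the small object argument, if one wants cofibrant generation) when $\d(\CP)=\infty$; this is harmless here since $\CP$ is a small poset, the object-degree function is $\NN$-valued, and $\CP^{\op}$ is an inverse category, so only well-founded downward induction on degree and the (co)completeness of $\Ch(R)$ are invoked.
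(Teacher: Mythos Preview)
Your proposal is correct and follows essentially the same route as the paper: equip $\CP$ (hence $\CP^{\op}$) with the evident Reedy structure, use the projective model structure on $\Ch(R)$, invoke Kan's theorem to obtain the Reedy model structure with objectwise weak equivalences and cofibrations, identify the matching maps to get the fibrancy description, and check that the constant/limit adjunction is Quillen. The only difference is in the last step: the paper simply asserts that, since $\Fun(\CP^{\op},\Ch(R))$ has enough injectives, the total right derived functor of $\lim$ recovers the classical $R^{\ast}\lim$, whereas you spell this out by showing that an injective resolution of $F$ is itself a Reedy fibrant replacement (via the explicit description of injectives as retracts of products of right Kan extensions $(i_p)_{\ast}J$ and the direct check that their matching maps are surjective). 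Your elaboration is a useful clarification of exactly the point the paper leaves implicit, but it is not a different argument.
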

\begin{proof}
We see $\CP$ as Reedy category, see \cite[Section 15]{Hirschhorn2003} for background, by setting $\direct\CP=\CP$ and $\invers{\CP}$ equal to the discrete category $\CP^0$, and we endow $\Ch(R)$ with the projective model category structure, see \cite[Subsection 18.4]{MAYPONTO2011}. Hence, the weak equivalences and the fibrations in $\Ch(R)$ are, respectively, the quasi-isomorphisms and the epimorphisms. Therefore, by the result of D. M. Kan \cite[Theorem 15.3.4]{Hirschhorn2003}, we can equip $\Fun(\CP^\op,\Ch(R))$ with a model category structure satisfying points (1) and (2) of the statement and the following,
\begin{center}
$\eta\colon F\to G$ is a cofibration if and only if  $\eta_p$ is a cofibration in $\Ch(R)$ for every $p\in \CP$.
\end{center}
Now, the adjoint pair, $\Delta\colon \Ch(R)\Longleftrightarrow
\Fun(\CP^\op,\Ch(R))\colon \lim$, where $\Delta$ is the diagonal functor, is a Quillen pair as, by the comments above, $\Delta$ preserves acyclic cofibrations and cofibrations. Thus, by \cite[Lemma 8.5.9]{Hirschhorn2003}, 
the total right derived functor ${\operatorname{Ho}(\Fun(\CP,\Ch(R)))\to \operatorname{Ho}(\Ch(R))}$ exists and takes $[F]$ to $[\lim \NR F]$, where $\NR F$ is a fibrant replacement of $F$. As there exist enough injectives in $\Fun(\CP^\op,\Ch(R))$, we recover its usual $i$-th right derived functors as ${H^i(\CP, F)=H^i(\lim \NR F)}$. This proves (3) of the statement.
\end{proof}

\begin{defn}
\label{def:mapping-cocylinder}
Let $f\colon C\to D$ be a map between unbounded cochain complexes. The mapping cocylinder of $f$ is the cochain complex $\cocyl(f)$ whose degree $n$ part is
\[
\cocyl(f)^n:=C^n\times D^{n-1}\times D^n,
\]
and whose differential is given by the formula
\[
\partial(c,d,d')=(\partial c,d'-f(c)-\partial d,\partial d').
\]
\end{defn}
By analogy with the mapping cylinder, see \cite[Section 1.5]{Weibel1994}, we have the following result.
\begin{prop}
\label{prop:cocyl-factorisation}
Let $f\colon C\to D$ be a morphism between cochain complexes. Then, there is a factorisation of $f$:
\[
\begin{tikzcd}[row sep=tiny]
C\arrow[r,"i"]&\cocyl(f)\arrow[r,"\pi"]& D\\
c\arrow[r,maps to] &(c,0,f(c))&\\
&(c,d,d')\arrow[r,maps to] &d'
\end{tikzcd}
\]
where $i$ is a monomorphism inducing an isomorphism in cohomology, and $\pi$ is a split epimorphism.
\end{prop}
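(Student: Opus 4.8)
The plan is to transpose the classical mapping-cylinder construction (see \cite[Section 1.5]{Weibel1994}) arrow by arrow. The first task is routine bookkeeping from \cref{def:mapping-cocylinder}: that $\partial^2=0$ on $\cocyl(f)$ — this is the one place where the hypothesis that $f$ is a chain map enters, through the identity $\partial f(c)=f(\partial c)$ — and that $i$ and $\pi$ as displayed are morphisms of cochain complexes with $\pi\circ i=f$. Each of these is a one-line check against the formula $\partial(c,d,d')=(\partial c,\,d'-f(c)-\partial d,\,\partial d')$.

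The assertions that $i$ is a monomorphism and about $\pi$ are then immediate. In degree $n$, $i$ is the injection $c\mapsto(c,0,f(c))$, so $i$ is a monomorphism. In degree $n$, $\pi$ is the projection $C^n\times D^{n-1}\times D^n\to D^n$, which is surjective and split, in each degree, as a map of $R$-modules, by $d'\mapsto(0,0,d')$; this $R$-linear section is not itself a chain map, exactly as in the classical picture, but this is all that ``split epimorphism'' requires.

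The real content is that $i$ induces an isomorphism in cohomology. Here I would introduce the retraction $r\colon\cocyl(f)\to C$, $(c,d,d')\mapsto c$, check that it is a chain map with $r\circ i=\Id_C$, and observe that $i$ therefore realizes $C$ as a direct-summand subcomplex of $\cocyl(f)$, with complementary subcomplex $K=\ker r=\{(0,d,d')\}$; under the resulting splitting $\cocyl(f)\cong i(C)\oplus K$, the map $H^*(i)$ becomes the inclusion $H^*(C)\hookrightarrow H^*(C)\oplus H^*(K)$, so it suffices to show that $K$ is acyclic — in fact contractible. I expect this to be the only step requiring genuine (though short) work: one exhibits a contracting homotopy for $K$, for which the degree $-1$ map $s(0,d,d')=(0,0,d)$ is the natural candidate, and a direct computation of $\partial s+s\partial$ from the differential formula gives $\Id_K$. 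Hence $K$ is contractible, so acyclic, and $i$ is a quasi-isomorphism. One can observe further that extending $s$ by zero along the summand $i(C)$ produces a homotopy $i\circ r\simeq\Id_{\cocyl(f)}$, so $i$ is even a chain homotopy equivalence, but only the cohomological statement is needed later.

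No step presents a serious obstacle; the points that warrant care are the degree and sign conventions in $\partial$ — these must be exactly right both for $\partial^2=0$ to hold and for it to use that $f$ commutes with differentials — and checking that the guessed homotopy $s$ has the correct degrees and really satisfies $\partial s+s\partial=\Id_K$.
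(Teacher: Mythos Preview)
Your proof is correct and is exactly the standard dualization of the mapping-cylinder argument that the paper alludes to via its reference to \cite[Section 1.5]{Weibel1994}; the paper itself gives no explicit proof beyond that reference, so there is nothing further to compare. Your reading of ``split epimorphism'' as degreewise $R$-linear split is the right one for this paper, as confirmed by how the proposition is invoked later (e.g.\ in the proofs of \cref{seccion_matching} and \cref{CL/lem/aux_shellable}, where only a section in a fixed degree is needed).
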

\begin{defn}
\label{def:fibrant-replacement}
Let $F\colon \CP^\op\to R\Mod$ be a functor over a filtered poset. The \emph{cocylinder} of $F$ is the functor $\NR{F}\colon \CP^\op\to \Ch(R)$ defined inductively on the objects by
\[
\NR F(p)=\cocyl\left(F(p)\to \lim_{\CP_{<p}} F\to \lim_{\CP_{<p}}\NR F\right)
\]
and for any $q<p$, the morphism
\[
\NR F(q<p):\NR F(p)\overset\pi\longrightarrow \lim_{\CP_{<p}}\NR F\longrightarrow \NR F(q)
\]
is the composite of the projection of the mapping cocylinder $\pi$ followed by the natural map of the cone $\lim_{\CP_{<p}}\NR F\to \NR F(q)$.
\end{defn}
\begin{prop}
\label{prop:hig-lim-via-RF}
Let $\CP$ be a filtered poset and $F\colon \CP^{\op}\to R\Mod$ be a functor. Then, the cocylinder of $F$, $\NR{F}$, is a fibrant replacement of $F$. Moreover, for every $p\in \CP$
\[
H^*(\CP_{<p};F)=H^*({\lim}_{\CP_{<p}}\NR{F})
\]
\end{prop}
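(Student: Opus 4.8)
The plan is to prove the two assertions separately, letting \cref{prop:cocyl-factorisation} and \cref{thm:description_model_category_Mop} do the real work. First I would check that $\NR F$ is a well-defined object of $\Fun(\CP^\op,\Ch(R))$ and that the maps $c\mapsto (c,0,f(c))$ assemble into a natural transformation $i\colon F\to \NR F$. Because $\CP$ is filtered one argues by induction on $\d(p)$: every element of $\CP_{<p}$ has strictly smaller degree, so $\lim_{\CP_{<p}}\NR F$ together with its projections $\mathrm{proj}_q$ are available from the inductive step (and exist because $\Ch(R)$ is complete, limits being formed degreewise). With $f$ the composite $F(p)\to \lim_{\CP_{<p}}F\to \lim_{\CP_{<p}}\NR F$, \cref{def:fibrant-replacement} sets $\NR F(p)=\cocyl(f)$ and $\NR F(q<p)=\mathrm{proj}_q\circ\pi$. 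Functoriality $\NR F(r<q)\circ \NR F(q<p)=\NR F(r<p)$ for $r<q<p$ is then immediate from the cone compatibility $\NR F(r<q)\circ\mathrm{proj}_q=\mathrm{proj}_r$ of the limit over $\CP_{<p}$, and the naturality square $\NR F(q<p)\circ i_p=i_q\circ F(q<p)$ unwinds, using $\pi\circ i_p=f$ and $\mathrm{proj}_q\circ(\lim_{\CP_{<p}}i)=i_q\circ\mathrm{proj}_q$, to the known compatibility of $i$ with structure maps on $\CP_{<p}$.

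Granting this, $\NR F$ is a fibrant replacement of $F$. Each $i_p$ induces an isomorphism in cohomology by \cref{prop:cocyl-factorisation}, so $i$ is a weak equivalence in the sense of \cref{thm:description_model_category_Mop}(1). For fibrancy, the canonical map $\NR F(p)\to \lim_{\CP_{<p}}\NR F$ induced by the family $\{\NR F(q<p)\}_{q<p}$ must, by the uniqueness part of the universal property of the limit, coincide with the cocylinder projection $\pi$ (which satisfies $\mathrm{proj}_q\circ\pi=\NR F(q<p)$ by construction); since $\pi$ is a split epimorphism by \cref{prop:cocyl-factorisation}, it is in particular a degreewise epimorphism, so $\NR F$ is fibrant by \cref{thm:description_model_category_Mop}(2).

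For the ``moreover'' clause, observe that for $q\in \CP_{<p}$ one has $\CP_{<q}=(\CP_{<p})_{<q}$, so $\CP_{<p}$ is itself a filtered poset with the degree function inherited from $\CP$, and the inductive recipe of \cref{def:fibrant-replacement} shows that the restriction of $\NR F$ to $\CP_{<p}$ is precisely the cocylinder of $F|_{\CP_{<p}}$. By the first part applied to the poset $\CP_{<p}$, this restriction is a fibrant replacement of $F|_{\CP_{<p}}$, whence \cref{thm:description_model_category_Mop}(3) gives $H^*(\CP_{<p};F)=H^*(\lim_{\CP_{<p}}\NR F)$. I do not expect a real obstacle: the construction was designed so that \cref{prop:cocyl-factorisation} furnishes simultaneously the weak equivalence and the splitting that gives fibrancy, and the only point deserving care is the bookkeeping of the simultaneous induction, in particular the observation that the degree function --- and hence the whole construction --- is unchanged on the down-sets $\CP_{<q}$.
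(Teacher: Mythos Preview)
Your proposal is correct and follows essentially the same approach as the paper: use \cref{prop:cocyl-factorisation} to obtain the factorisation as a quasi-isomorphism followed by a split epimorphism, invoke \cref{thm:description_model_category_Mop}(1)--(2) for weak equivalence and fibrancy, observe that $(\NR F)\vert_{\CP_{<p}}=\NR(F\vert_{\CP_{<p}})$, and conclude via \cref{thm:description_model_category_Mop}(3). The paper's proof is terser and omits the well-definedness and functoriality bookkeeping you spell out, but the logical structure is identical.
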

\begin{proof}
By \cref{prop:cocyl-factorisation}, for every $p\in \CP$ we have the following factorisation of the composite $\varepsilon_p\colon F(p)\to \lim_{\CP_{<p}} F\to \lim_{\CP_{<p}} \NR F$
\[
F(p)\to \NR F(p):=\cocyl(\varepsilon_p)\to \lim_{\CP_{<p}} \NR F
\]
as a quasi-isomorphism followed by a split epimorphism. From \cref{thm:description_model_category_Mop}(1), the natural transformation $F\to \NR F$ is a weak equivalence and, from (2), $\NR F$ is a fibrant object. Moreover, since this construction only depends on the rays $\CP_{<p}$, for every $p\in \CP$, it follows that 
\[
(\NR F)\vert_{\CP_{< p}}=\NR (F\vert_{\CP_{< p}})
\]
for every $p$. Now, by statement (3) of the same theorem, we have $H^*(\CP_{<p};F)=H^*(\lim_{\CP_{<p}}\NR{F})$.
\end{proof}
\begin{prop}
\label{fib-rep-F(0)=0}
Let $\CP$ be a bounded poset, $p\in \CP$ and $F\colon \CP^{\op}\to R\Mod$ be a functor. Then, $(\NR F)\vert_{\overline \CP}\cong \NR (F\vert_{\overline \CP})$ if and only if $F(\hat 0)=0$. Moreover, in this case, $H^i(\CP\setminus\{\hat 1\};F)\cong H^i(\overline{\CP};F)$ for all $i\geq 0$.
\end{prop}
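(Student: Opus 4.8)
The plan is to prove the stated equivalence first, by an induction on degree, and then obtain the cohomology identification through a short chain of isomorphisms. The argument rests on two elementary observations, which I would record at the outset. Since $\CP_{<\hat 0}=\emptyset$ and a limit over the empty poset is $0$, \cref{def:fibrant-replacement} together with \cref{prop:cocyl-factorisation} gives
\[
\NR F(\hat 0)=\cocyl\bigl(F(\hat 0)\to 0\to 0\bigr)=F(\hat 0),
\]
concentrated in degree $0$; in particular $\NR F(\hat 0)=0$ precisely when $F(\hat 0)=0$. And for any poset $\mathcal D$ with a minimum $m$ and any functor $G$ on $\mathcal D$ with $G(m)=0$, the restriction map $\lim_{\mathcal D}G\to\lim_{\mathcal D\setminus\{m\}}G$ is an isomorphism: a compatible family indexed by $\mathcal D\setminus\{m\}$ extends uniquely to one indexed by $\mathcal D$ by putting $0$ in the $m$-coordinate, all structure maps with target $G(m)=0$ being zero. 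Taking $\mathcal D=\CP_{<p}$ and $m=\hat 0$ for $p\in\overline\CP$ --- so $\mathcal D\setminus\{m\}=\overline\CP_{<p}$ --- this lets one replace limits over $\CP_{<p}$ by limits over $\overline\CP_{<p}$ as soon as the coefficient functor vanishes at $\hat 0$.

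For the direction $(\Leftarrow)$, assume $F(\hat 0)=0$. I would construct an isomorphism of functors $\Phi\colon(\NR F)\vert_{\overline\CP}\to\NR(F\vert_{\overline\CP})$, compatible with the transition maps and with the weak equivalences $F\to\NR F$, by induction on $\d(p)$ for $p\in\overline\CP$. For such $p$ the poset $\CP_{<p}$ has minimum $\hat 0$ and $\CP_{<p}\setminus\{\hat 0\}=\overline\CP_{<p}$, whose elements all have degree $<\d(p)$. Since $F(\hat 0)=0=\NR F(\hat 0)$, the restriction maps $\lim_{\CP_{<p}}F\to\lim_{\overline\CP_{<p}}F$ and $\lim_{\CP_{<p}}\NR F\to\lim_{\overline\CP_{<p}}\NR F$ are isomorphisms, and the inductive $\Phi$ gives $\lim_{\overline\CP_{<p}}\NR F\cong\lim_{\overline\CP_{<p}}\NR(F\vert_{\overline\CP})$. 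Composing these, and using that $\cocyl$ is functorial in its defining morphism, produces
\[
(\NR F)(p)=\cocyl\!\left(F(p)\to\lim_{\CP_{<p}}F\to\lim_{\CP_{<p}}\NR F\right)\cong\cocyl\!\left(F(p)\to\lim_{\overline\CP_{<p}}F\vert_{\overline\CP}\to\lim_{\overline\CP_{<p}}\NR(F\vert_{\overline\CP})\right)=\NR(F\vert_{\overline\CP})(p),
\]
which I take as $\Phi_p$; the base case is $\overline\CP_{<p}=\emptyset$, that is, $p$ an atom of $\CP$, where both sides are $F(p)$ and $\Phi_p=\Id$.

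For $(\Rightarrow)$ I would test at an atom. Assuming $\overline\CP\neq\emptyset$ (the equivalence is vacuous otherwise), pick a minimal element $a$ of $\overline\CP$; it is an atom of $\CP$, so $\overline\CP_{<a}=\emptyset$ and $\CP_{<a}=\{\hat 0\}$. Then $\NR(F\vert_{\overline\CP})(a)=\cocyl(F(a)\to0\to0)=F(a)$ is concentrated in degree $0$, whereas $\lim_{\{\hat 0\}}F=F(\hat 0)$ and $\lim_{\{\hat 0\}}\NR F=\NR F(\hat 0)=F(\hat 0)$, so $\NR F(a)=\cocyl\bigl(F(a)\to F(\hat 0)\bigr)$ with both terms in degree $0$, and hence the degree-$1$ part of $\NR F(a)$ is $F(\hat 0)$; a degreewise isomorphism $\NR F(a)\cong\NR(F\vert_{\overline\CP})(a)$ then forces $F(\hat 0)=0$. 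For the "moreover", assume again $F(\hat 0)=0$. Then $\CP\setminus\{\hat 1\}=\CP_{<\hat 1}$ as $\hat 1$ is the maximum, $\NR F(\hat 0)=0$ by the first paragraph, and $\CP_{<\hat 1}\setminus\{\hat 0\}=\overline\CP$; combining \cref{prop:hig-lim-via-RF} (with $p=\hat 1$), the second observation of the first paragraph, the implication $(\Leftarrow)$ just proven, and \cref{thm:description_model_category_Mop}(3) applied to the filtered poset $\overline\CP$ (for which $\NR(F\vert_{\overline\CP})$ is a fibrant replacement of $F\vert_{\overline\CP}$, again by \cref{prop:hig-lim-via-RF}), one obtains
\[
H^i(\CP\setminus\{\hat 1\};F)=H^i\!\left(\lim_{\CP_{<\hat 1}}\NR F\right)\cong H^i\!\left(\lim_{\overline\CP}\NR F\right)\cong H^i\!\left(\lim_{\overline\CP}\NR(F\vert_{\overline\CP})\right)=H^i(\overline\CP;F)
\]
for every $i\geq 0$.

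The hard part --- essentially the only step that is not immediate --- is checking that the objectwise isomorphisms $\Phi_p$ of the $(\Leftarrow)$ direction fit together into a natural transformation that is also compatible with the weak equivalences $F\to\NR F$; this is a routine but somewhat tedious unravelling of \cref{def:fibrant-replacement}, verifying that the unit $F(p)\to\lim_{\CP_{<p}}F$, the mapping-cocylinder projection $\pi$, and each leg $\lim_{\CP_{<p}}\NR F\to\NR F(q)$ of the limit cone all commute with discarding the $\hat 0$-coordinate. Everything else is bookkeeping with limits over subposets and with the explicit shape of $\cocyl$.
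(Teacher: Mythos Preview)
Your proof is correct and follows the same approach as the paper's proof, which is a single sentence pointing to the inductive construction of $\NR F$ and the observation that $\NR F(p)=F(p)$ for every atom $p$ if and only if $F(\hat 0)=0$; you have simply spelled out the details the paper leaves implicit. One small quibble: when $\overline\CP=\emptyset$ the equivalence is not ``vacuous'' but actually fails in the $\Rightarrow$ direction (both restricted functors are the empty functor regardless of $F(\hat 0)$), an edge case the paper does not address either.
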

\begin{proof}
This follows from the inductive construction of $\NR F$ and the fact that $\NR F(p)=F(p)$ for every $p\in \CP$ of degree $1$ if and only if $F(\hat 0)=0$.
\end{proof}
\subsection{Homology}
In this subsection, we present the dual versions of the definitions and results introduced previously. Proofs are omitted. Taking colimit defines a right exact functor that is not left exact,
$\colim\colon \Fun(\CP,R\Mod)\to R\Mod$, so we can left-derive it. Higher colimits of a functor, or the homology modules of a functor, $F\colon \CP\to R\Mod$ are the left-derived functor of $\colim$ evaluated in $F$, and we will denote it by
\[
H_*(\CP;F)=L_*\colim(-)(F).
\]

\begin{thm}
\label{thm:description_model_category_M}
Let $R$ be a commutative ring with unit and $\CP$ be a filtered poset. There exists a model category structure in the category of functors $\Fun(\CP,\Ch(R))$ in which
\begin{enumerate}
\item a natural transformation $\eta\colon F\to G$ is a weak equivalence if, for every $p\in \CP$, the morphism  $\eta_p\colon F(p)\to G(p)$ is a quasi-isomorphism, that is $H_*(\eta_p)$ is an isomorphism;
\item a functor $F\colon \CP\to \Ch(R)$ is cofibrant if the natural map $\colim_{\CP_{<p}} F\to F(p)$ is a degree-wise monomorphism; and 
\item a functor $F\colon\CP\to R\Mod$ satisfies that 
\[
H_*(\CP; F)=H_*({\colim}_\CP \NQ F),
\]
where ${\NQ F\colon \CP\to \Ch(R)}$ is any cofibrant replacement of $F$.
\end{enumerate}
\end{thm}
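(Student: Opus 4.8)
\textbf{Proof plan for \cref{thm:description_model_category_M}.}
The plan is to dualise the argument given for \cref{thm:description_model_category_Mop}, exchanging the roles of $\direct\CP$ and $\invers\CP$ in the Reedy structure and replacing the projective model structure on $\Ch(R)$ by the injective one. Concretely, I would again view $\CP$ as a Reedy category, but this time with $\invers\CP=\CP$ and $\direct\CP=\CP^0$ the discrete subcategory, and I would equip $\Ch(R)$ with the injective model structure, so that the weak equivalences are the quasi-isomorphisms and the cofibrations are the (degree-wise) monomorphisms. Kan's theorem \cite[Theorem 15.3.4]{Hirschhorn2003} then produces a Reedy model structure on $\Fun(\CP,\Ch(R))$ in which a map $\eta$ is a weak equivalence iff each $\eta_p$ is a quasi-isomorphism, a map $\eta$ is a fibration iff each $\eta_p$ is a fibration in $\Ch(R)$, and — this is the point that needs the matching-object computation — a functor $F$ is cofibrant iff for every $p\in\CP$ the latching map $L_pF=\colim_{\CP_{<p}}F\to F(p)$ is a (degree-wise) monomorphism. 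This gives (1) and (2).

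For (3) I would consider the Quillen adjunction $\colim\colon\Fun(\CP,\Ch(R))\rightleftarrows\Ch(R)\colon\Delta$, where now $\colim$ is the left adjoint; it is a Quillen pair because its right adjoint $\Delta$ preserves fibrations and acyclic fibrations (equivalently, $\colim$ preserves cofibrations and acyclic cofibrations, which follows from the Reedy/injective description of the cofibrations above). By \cite[Lemma 8.5.9]{Hirschhorn2003} the total left derived functor of $\colim$ exists on homotopy categories and sends $[F]$ to $[\colim\,\NQ F]$ for any cofibrant replacement $\NQ F$. Since $\Fun(\CP,\Ch(R))$ has enough projectives, this total left derived functor computes the usual left derived functors, so $H_*(\CP;F)=H_*(\colim\,\NQ F)$, which is (3).

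The main obstacle is purely Reedy-categorical bookkeeping: one must check that $\CP$, filtered via the degree map $\d$, is genuinely a Reedy category with the chosen direct/inverse factorisation, that the latching objects in this Reedy structure are exactly the colimits $\colim_{\CP_{<p}}F$ appearing in the statement (so the Reedy-cofibrancy condition really is the one in (2)), and that the injective model structure on $\Ch(R)$ of unbounded complexes is available — all of this is the formal dual of what was done for \cref{thm:description_model_category_Mop}, so no genuinely new difficulty arises. As stated in the excerpt, the proof is omitted, so I would simply indicate that it is obtained from the proof of \cref{thm:description_model_category_Mop} by replacing limits with colimits, the projective model structure with the injective one, monomorphisms with epimorphisms, and injectives with projectives.
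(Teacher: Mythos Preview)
Your overall strategy --- dualize the proof of \cref{thm:description_model_category_Mop} by passing to the injective model structure on $\Ch(R)$ and to the Quillen adjunction $\colim\dashv\Delta$ --- is exactly what the paper intends (the proof is omitted there as a formal dual). However, there is a concrete error in your Reedy bookkeeping: you propose to set $\invers\CP=\CP$ and $\direct\CP=\CP^0$, but with the degree function $\d$ every non-identity morphism of $\CP$ \emph{raises} degree, so by the axioms of a Reedy category all of $\CP$ must lie in $\direct\CP$, not in $\invers\CP$. Worse, your choice is internally inconsistent: with $\direct\CP=\CP^0$ the latching category at each $p$ is empty and hence $L_pF=0$, contradicting your own identification $L_pF=\colim_{\CP_{<p}}F$ a few lines later.

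The fix is not to swap the Reedy structure at all: keep $\direct\CP=\CP$ and $\invers\CP=\CP^0$ exactly as in the proof of \cref{thm:description_model_category_Mop}. What actually changes in the dual is (i) the indexing category is $\CP$ rather than $\CP^{\op}$, so latching objects replace matching objects, and (ii) the model structure on $\Ch(R)$ is the injective one rather than the projective one. With $\direct\CP=\CP$ one indeed has $L_pF=\colim_{\CP_{<p}}F$, so Reedy-cofibrancy is precisely condition~(2); and since $\invers\CP$ is discrete, Reedy fibrations are objectwise fibrations, whence $\Delta$ trivially preserves fibrations and acyclic fibrations and $\colim\dashv\Delta$ is a Quillen pair. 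After this correction, the remainder of your outline (existence of the total left derived functor via \cite[Lemma~8.5.9]{Hirschhorn2003}, and enough projectives in $\Fun(\CP,\Ch(R))$ to identify it with the usual left derived functors of $\colim$) goes through unchanged.
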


\begin{defn}
Let $f\colon C\to D$ be a map between unbounded chain complexes. The mapping cylinder of $f$ is the chain complex $\cocyl(f)$ whose degree $n$ part is
\[
\cyl(f)_n:=C_n\times C_{n-1}\times D_n,
\]
and whose differential is given by the formula
\[
\partial(c,c',d)=(\partial c+c',-\partial c',\partial d-f(b')).
\]
\end{defn}

\begin{prop}
\label{dual:cyl-factorisation}
Let $f\colon C\to D$ be a morphism between chain complexes. Then, there is a factorisation of $f$:
\[
\begin{tikzcd}[row sep=tiny]
C\arrow[r,"i"]&\cyl(f)\arrow[r,"\pi"]& D\\
c\arrow[r,maps to] &(c,0,0)&\\
&(c,c',d)\arrow[r,maps to] &f(c)+d
\end{tikzcd}
\]
where $i$ is a monomorphism inducing an isomorphism in homology, and $\pi$ is a split epimorphism.
\end{prop}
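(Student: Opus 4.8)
The plan is to prove \cref{dual:cyl-factorisation} by direct computation, in complete analogy with the dual statement \cref{prop:cocyl-factorisation} for the mapping cocylinder, checking three things: that $i$ and $\pi$ are chain maps, that $i$ is a monomorphism inducing an isomorphism on homology, and that $\pi$ is a split epimorphism.

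First I would verify that the stated differential on $\cyl(f)_n = C_n\times C_{n-1}\times D_n$ squares to zero, and that $i\colon c\mapsto (c,0,0)$ and $\pi\colon (c,c',d)\mapsto f(c)+d$ commute with the differentials; these are routine sign-bookkeeping checks using $\partial(c,c',d)=(\partial c+c',-\partial c',\partial d-f(c'))$ (note the typo ``$f(b')$'' should read $f(c')$). That $i$ is a (degreewise split) monomorphism is immediate since it is inclusion of a direct summand, with retraction $(c,c',d)\mapsto c$. That $\pi$ is a split epimorphism follows from the section $D\to \cyl(f)$, $d\mapsto (0,0,d)$, which one checks is a chain map.

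The one genuinely substantive point is that $i$ induces an isomorphism in homology. The standard argument is to exhibit a chain homotopy equivalence: the map $r\colon \cyl(f)\to C$ given by $(c,c',d)\mapsto c$ (this time we must use $c$, not the retraction above necessarily — but in fact they coincide) satisfies $r\circ i=\Id_C$, and I would construct a chain homotopy $s\colon \cyl(f)_n\to \cyl(f)_{n+1}$ with $\partial s + s\partial = \Id - i\circ r$, by the usual formula $s(c,c',d)=(0,c,0)$ up to signs; a direct computation then shows the difference $\Id-i r$ is null-homotopic. Hence $i$ and $r$ are mutually inverse homotopy equivalences, so $i$ induces an isomorphism $H_*(C)\xrightarrow{\cong} H_*(\cyl(f))$. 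Alternatively, and perhaps more cleanly, one invokes the short exact sequence of complexes $0\to C\xrightarrow{i}\cyl(f)\to \coker(i)\to 0$ and checks that $\coker(i)$, with underlying graded module $C_{n-1}\times D_n$ and induced differential, is acyclic because it is the mapping cone of $\Id_C$ shifted appropriately (or directly, it deformation retracts to $0$); the long exact sequence then gives the claim.

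The main obstacle is purely notational: keeping the signs consistent in the differential formula for $\cyl(f)$ — especially reconciling the sign conventions with those already fixed for $\cocyl(f)$ in \cref{def:mapping-cocylinder} — and verifying that the proposed homotopy $s$ actually satisfies the homotopy identity with those signs. There is no conceptual difficulty: this is the classical mapping-cylinder factorisation of \cite[Section 1.5]{Weibel1994} adapted to unbounded chain complexes, and the dualisation of the already-established \cref{prop:cocyl-factorisation}. Since the subsection announces that proofs are omitted, in the paper itself this would simply be stated with a reference; in full it is the verification just outlined.
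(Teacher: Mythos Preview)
There is a genuine gap, and it stems from the fact that the stated proposition contains an error that you did not catch. In the factorisation $C\xrightarrow{i}\cyl(f)\xrightarrow{\pi}D$, it is $\pi$ that induces an isomorphism in homology, not $i$; the map $i$ is merely a (degreewise split) monomorphism. This is what is actually needed in \cref{def:cofibrant-replacement} and \cref{prop:hig-colim-via-QF}: cofibrancy of $\NQ F$ uses that $i$ is a monomorphism, while the weak equivalence $\NQ F\to F$ uses that $\pi$ is a quasi-isomorphism. It is also the statement in \cite[1.5.5--1.5.6]{Weibel1994}. The paper's phrasing simply copied the cocylinder proposition verbatim, and since proofs are omitted in that subsection the slip went unnoticed.

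Both of your proposed arguments for ``$i$ is a quasi-isomorphism'' fail, as they must. Your retraction $r\colon(c,c',d)\mapsto c$ is not a chain map: $r\,\partial(c,c',d)=\partial c+c'$ while $\partial\, r(c,c',d)=\partial c$. And the cokernel of $i$, with underlying graded module $C_{n-1}\times D_n$ and induced differential $(c',d)\mapsto(-\partial c',\,\partial d-f(c'))$, is the mapping cone of $f$, not of $\Id_C$; it is acyclic precisely when $f$ is a quasi-isomorphism, which is not assumed. The correct argument goes the other way: the section $\sigma\colon d\mapsto(0,0,d)$ of $\pi$ \emph{is} a chain map, and the homotopy $s(c,c',d)=(0,c,0)$ witnesses $\partial s+s\partial=\Id-\sigma\pi$, so $\pi$ is a chain homotopy equivalence. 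Equivalently, $\ker\pi\cong\{(c,c',-f(c))\}$ with differential $(c,c')\mapsto(\partial c+c',-\partial c')$ is the cone of $\Id_C$, hence acyclic.
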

\begin{defn}
\label{def:cofibrant-replacement}
Let $F\colon \CP\to R\Mod$ be a functor over a filtered poset. The \emph{cylinder} of $F$, is the functor $\NQ{F}\colon \CP\to \Ch(R)$ defined inductively on the objects by
\[
\NQ F(p)=\cyl\left(\colim_{\CP_{<p}}\NQ F\to \colim_{\CP_{<p}} F\to F(p)\right)
\]
and for any $q<p$, the morphism
\[
\NQ F(q<p):\NQ F(q)\longrightarrow \colim_{\CP_{<p}}\NQ F\overset i\longrightarrow \NQ F(p)
\]
is the composite of the natural map of the cocone $F(q)\to \colim_{\CP_{<p}}\NQ F$ followed by the inclusion of the mapping cylinder $i$.
\end{defn}
\begin{prop}
\label{prop:hig-colim-via-QF}
Let $\CP$ be a filtered poset and $F\colon \CP\to R\Mod$ be a functor. Then, the cylinder of $F$, $\NQ{F}$, is a cofibrant replacement of $F$. Moreover, for every $p\in \CP$
\[
H_*(\CP_{<p};F)=H_*({\colim}_{\CP_{<p}}\NQ{F})
\]
\end{prop}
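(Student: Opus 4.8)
The plan is to dualise the proof of \cref{prop:hig-lim-via-RF} essentially verbatim, interchanging limits with colimits and monomorphisms with epimorphisms, and using \cref{dual:cyl-factorisation} and \cref{thm:description_model_category_M} in place of \cref{prop:cocyl-factorisation} and \cref{thm:description_model_category_Mop}.

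First I would check that the recursion in \cref{def:cofibrant-replacement} is legitimate. Since $\CP$ is filtered, $q<p$ implies $\d(q)<\d(p)$, so one inducts on $\d(p)$: when $\CP_{<p}=\emptyset$ (in particular for $\d(p)=0$) both colimits are the zero complex and $\NQ F(p)=\cyl(0\to F(p))\cong F(p)$, which starts the induction; for general $p$ the functor $\NQ F$, together with its structure maps and their functoriality, is already available on $\CP_{<p}$, so $\colim_{\CP_{<p}}\NQ F$, the natural map $\colim_{\CP_{<p}}\NQ F\to\colim_{\CP_{<p}}F$, and hence $\varepsilon_p\colon\colim_{\CP_{<p}}\NQ F\to\colim_{\CP_{<p}}F\to F(p)$ are defined, and one sets $\NQ F(p):=\cyl(\varepsilon_p)$. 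One then verifies that the maps $\NQ F(q<p)$ of \cref{def:cofibrant-replacement} compose correctly, which reduces to compatibility of the cocone maps into $\colim_{\CP_{<p}}\NQ F$ with the already functorial maps on $\CP_{<p}$ together with functoriality of $\cyl(-)$, so that $\NQ F\colon\CP\to\Ch(R)$ is a functor; simultaneously one checks that the maps $\pi\colon\NQ F(p)\to F(p)$ assemble into a natural transformation $\NQ F\to F$, naturality in $q<p$ following from $\pi\circ i=\varepsilon_p$ and compatibility of the cocone maps $F(q)\to\colim_{\CP_{<p}}F$.

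Granting this, \cref{dual:cyl-factorisation} applied to $\varepsilon_p$ factors the canonical map $\colim_{\CP_{<p}}\NQ F\to F(p)$ as a degree-wise monomorphism $i\colon\colim_{\CP_{<p}}\NQ F\to\NQ F(p)$ followed by $\pi\colon\NQ F(p)\to F(p)$; the map $\pi$ is a split epimorphism, and since the mapping cylinder deformation retracts onto its target it is in addition a quasi-isomorphism. Consequently $\NQ F\to F$ is an objectwise quasi-isomorphism, hence a weak equivalence by \cref{thm:description_model_category_M}(1), while the natural map $\colim_{\CP_{<p}}\NQ F\to\NQ F(p)$ is the degree-wise monomorphism $i$, so $\NQ F$ is cofibrant by \cref{thm:description_model_category_M}(2); thus $\NQ F$ is a cofibrant replacement of $F$. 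Finally, since the construction at $p$ only uses the ray $\CP_{<p}$, an induction on degree gives $(\NQ F)\vert_{\CP_{<p}}=\NQ(F\vert_{\CP_{<p}})$ for every $p$, and applying \cref{thm:description_model_category_M}(3) to the filtered poset $\CP_{<p}$ and the functor $F\vert_{\CP_{<p}}$ yields $H_*(\CP_{<p};F)=H_*(\colim_{\CP_{<p}}\NQ F)$.

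The only step with genuine content is the intertwined induction in the second paragraph — simultaneously establishing that $\NQ F$ is a well-defined functor and that $\NQ F\to F$ is natural — and this is entirely parallel to what is done (tacitly) in \cref{prop:hig-lim-via-RF}. Everything afterwards is a formal dualisation of the model-categorical argument, so I anticipate no essentially new obstacle.
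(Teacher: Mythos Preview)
Your proposal is correct and matches the paper's intended approach: the paper omits the proof entirely, stating that the homology subsection consists of dual versions of the cohomology results, and your argument is precisely the dualisation of the proof of \cref{prop:hig-lim-via-RF} using \cref{dual:cyl-factorisation} and \cref{thm:description_model_category_M}. Your care in noting that $\pi\colon\cyl(\varepsilon_p)\to F(p)$ is the quasi-isomorphism (via the deformation retraction of the cylinder onto its target) is exactly what is needed for $\NQ F\to F$ to be a weak equivalence, and your added detail on the well-definedness of the inductive construction goes beyond what the paper spells out even in the contravariant case.
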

\begin{prop}
Let $\CP$ be a bounded poset, $p\in \CP$ and $F\colon \CP\to R\Mod$ be a functor. Then, $(\NQ F)\vert_{\overline \CP}\cong \NQ (F\vert_{\overline \CP})$ if and only if $F(\hat 0)=0$.  Moreover, in this case, $H_i(\CP\setminus\{\hat 1\};F)\cong H_i(\overline{\CP};F)$ for all $i\geq 0$.
\end{prop}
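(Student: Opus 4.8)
The plan is to dualize the proof of \cref{fib-rep-F(0)=0} verbatim, replacing the cocylinder $\NR F$ by the cylinder $\NQ F$ of \cref{def:cofibrant-replacement}. First I would read off the construction at the extremes. At the minimum, $\CP_{<\hat 0}=\emptyset$, so both colimits in \cref{def:cofibrant-replacement} vanish and, the cylinder of a map out of the zero complex being its target, $\NQ F(\hat 0)=F(\hat 0)$ as a complex concentrated in degree $0$. For an atom $a$ one has $\CP_{<a}=\{\hat 0\}$, hence $\NQ F(a)=\cyl\bigl(F(\hat 0)\xrightarrow{\ \Id\ }F(\hat 0)\to F(a)\bigr)$, whose degree-$0$ part is $F(\hat 0)\oplus F(a)$ and whose degree-$1$ part is exactly $F(\hat 0)$. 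Thus $\NQ F(a)=F(a)$ (concentrated in degree $0$) for every atom $a$ if and only if $F(\hat 0)=0$; this is the analogue of the fact, used in \cref{fib-rep-F(0)=0}, that $\NR F(p)=F(p)$ for all $p$ of degree $1$ exactly when $F(\hat 0)=0$.

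For the ``if'' direction I would show, by induction on $\d(p)$, that $F(\hat 0)=0$ forces $\NQ F(p)=\NQ(F\vert_{\overline\CP})(p)$ for every $p\in\overline\CP$, with matching structure maps, whence $(\NQ F)\vert_{\overline\CP}=\NQ(F\vert_{\overline\CP})$. The mechanism is that for $p\in\overline\CP$ the ray splits as $\CP_{<p}=\{\hat 0\}\cup\overline\CP_{<p}$, that $\hat 0$ is its minimum, and that the minimum of a diagram, if it has value $0$, may be deleted without changing the colimit in any degree --- it contributes a zero summand and imposes no relations, since nothing lies below it. Applying this to $\NQ F$ and to $F$ over $\CP_{<p}$, using $\NQ F(\hat 0)=F(\hat 0)=0$ and the inductive hypothesis over $\overline\CP_{<p}$, identifies the two cylinders together with their inclusions $i$ from \cref{dual:cyl-factorisation}, as desired. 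For the converse, if $F(\hat 0)\neq 0$ and $\overline\CP\neq\emptyset$, choose an atom $a$ of $\CP$ lying in $\overline\CP$ (one lies below every element of $\overline\CP$) and note that the degree-$1$ part of $\NQ F(a)$ is $F(\hat 0)\neq 0$, while that of $\NQ(F\vert_{\overline\CP})(a)=F(a)$ is $0$, so no natural isomorphism can exist.

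For the ``Moreover'' clause I would apply the same deletion argument one level up. The poset $\CP\setminus\{\hat 1\}$ is again filtered, with minimum $\hat 0$, and removing $\hat 0$ from it leaves exactly $\overline\CP$; since $F(\hat 0)=0$, the argument above gives $\NQ\bigl(F\vert_{\CP\setminus\{\hat 1\}}\bigr)(\hat 0)=0$ and $\NQ\bigl(F\vert_{\CP\setminus\{\hat 1\}}\bigr)\vert_{\overline\CP}=\NQ(F\vert_{\overline\CP})$, so the chain complexes $\colim_{\CP\setminus\{\hat 1\}}\NQ\bigl(F\vert_{\CP\setminus\{\hat 1\}}\bigr)$ and $\colim_{\overline\CP}\NQ(F\vert_{\overline\CP})$ agree in every degree. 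Computing both higher colimits from these explicit cofibrant replacements via \cref{thm:description_model_category_M}(3) then gives $H_i(\CP\setminus\{\hat 1\};F)\cong H_i(\overline\CP;F)$ for all $i\geq 0$. The one point that needs genuine care --- everything else being a mechanical transcription of \cref{fib-rep-F(0)=0} --- is the degreewise bookkeeping in the inductive step: one must check that deleting $\hat 0$ leaves the colimit of a chain-complex-valued diagram unchanged in each degree and compatibly with all structure and cylinder maps, since this is what upgrades the expected homotopy-level coincidence to an honest equality of functors.
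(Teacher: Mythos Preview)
Your proposal is correct and follows essentially the same approach as the paper. The paper omits the proof of this covariant statement entirely, referring the reader to the dual of \cref{fib-rep-F(0)=0}, whose proof is the single sentence ``This follows from the inductive construction of $\NR F$ and the fact that $\NR F(p)=F(p)$ for every $p\in \CP$ of degree $1$ if and only if $F(\hat 0)=0$''; your argument is precisely the dualization of that idea, with the details of the induction and the ``Moreover'' clause spelled out.
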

\section{Vanishing (co)homology of stable functors}\label{sec:vanishing}
In this section, we present a notion of stable functor, which is weaker than the notion of cofibrant functor presented in Section \ref{section:(co)homology}, and that still implies vanishing of the cohomology in a single degree. This notion is inspired by the recursive construction of shellable posets, see \cite{BWnonpure}. We also introduce the dual notion of co-stable functor and the corresponding vanishing results for homology.

\begin{defn}
\label{def:stability}
Let $(\CP,\ll)$ be a dual CL-shellable pair and $F\colon \CP^{\op}\to R\Mod$ a functor. We say that $(\CP, \ll,F)$ has the \emph{stability property at} $1\le i\leq \d(\CP)-1$ if for any unrefinable $\hat1$-chain $c$ of length $i$, the natural map
\[
F(c_0)\to \lim_{\langle C(c) \rangle} F
\]
is an epimorphism. If $(\CP,\ll)$ is clear from the context, we also say that $F$ is \emph{stable} at $i$.
\end{defn}

Note that, if $\CP$ is pure, then the previous condition is equivalent to that the natural map above is surjective for any unrefinable $\hat1$-chain $c$ with $\cd(c_0)=i$.

\begin{rmk}
\label{rmk:stability_property_cases}
The condition above for either $i=0$ or $i=\d(\CP)$ holds trivially as $C(c)=\emptyset$ in these cases. If $i=\d(\CP)-1$, then the natural map above is either $F(c_0) \to F(\hat 0)$ or $F(c_0)\to 0$. If $F(\hat 0)=0$ and $i=\d(\CP)-2$, the natural map above is exactly
\[
F(c_0)\to \bigoplus_{a\in C(c)} F(a),
\]
where we note that $C(c)$ is contained in the set of atoms of $\CP$. 
\end{rmk}

\begin{thm}
\label{main_theorem}
Let $(\CP,\ll)$ be a dual CL-shellable pair, $1\le i\le \d(\CP)-1$, and $F\colon \CP^{\op}\to R\Mod$. If $(\CP,\ll,F)$ has the stability property at $i$, then
\[
H^i(\CP\setminus \{\hat 1\};F)=0.
\]
\end{thm}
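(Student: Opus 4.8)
The plan is to prove $H^{i}(\CP\setminus\{\hat1\};F)=0$ by induction on $\d(\CP)$, peeling off one coatom at a time and feeding the pieces into a Mayer--Vietoris sequence. Two preliminary facts are needed. \emph{(a)~Mayer--Vietoris.} If a down-set $X\subseteq\CP\setminus\{\hat1\}$ is written as $X=A\cup B$ with $A,B$ down-sets, there are no order relations between $A\setminus B$ and $B\setminus A$, so $\lim_{X}$ is the pullback of $\lim_{A}$ and $\lim_{B}$ over $\lim_{A\cap B}$; applying this to the cocylinder $\NR F$ of \cref{def:fibrant-replacement} — which restricts to a fibrant replacement over any down-set (its construction only uses the rays $\CP_{<p}$, see \cref{prop:hig-lim-via-RF}) and for which restriction along a down-set inclusion is a degreewise epimorphism since $\NR F$ is fibrant — I get a short exact sequence $0\to\lim_{X}\NR F\to\lim_{A}\NR F\oplus\lim_{B}\NR F\to\lim_{A\cap B}\NR F\to0$ of cochain complexes, hence the Mayer--Vietoris long exact sequence relating $H^{*}(X;F)$, $H^{*}(A;F)$, $H^{*}(B;F)$ and $H^{*}(A\cap B;F)$. \emph{(b)~An intersection formula.} Ordering the coatoms $\CH=\{h_{1}\ll h_{2}\ll\cdots\}$ by $\ll\,:=\,\ll_{\hat1}$ (of order type $\le\omega$ by \textbf{CL0}) and writing $Y_{n}=\bigcup_{l\le n}\CP_{\le h_{l}}$, one has $Y_{n-1}\cap\CP_{\le h_{n}}=\langle C(h_{n}\prec\hat1)\rangle$; the inclusion $\subseteq$ is \textbf{CL2}, the inclusion $\supseteq$ unwinds the definition of $C(h_{n}\prec\hat1)$.

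For the induction (the case $\d(\CP)\le1$ being vacuous, no degree $i$ being admissible) I would show $H^{i}(Y_{n};F)=0$ for all $n$ by a secondary induction on $n$. As $Y_{1}=\CP_{\le h_{1}}$ has a maximum and $i\ge1$, $H^{i}(Y_{1};F)=0$. For $n\ge2$, Mayer--Vietoris for $Y_{n}=Y_{n-1}\cup\CP_{\le h_{n}}$ gives around degree $i$
\[
H^{i-1}(Y_{n-1};F)\oplus H^{i-1}(\CP_{\le h_{n}};F)\xrightarrow{\ \rho\ }H^{i-1}(\langle C(h_{n}\prec\hat1)\rangle;F)\to H^{i}(Y_{n};F)\to H^{i}(Y_{n-1};F)\oplus H^{i}(\CP_{\le h_{n}};F),
\]
whose last term vanishes ($H^{i}(Y_{n-1};F)=0$ by the secondary induction; $H^{i}(\CP_{\le h_{n}};F)=0$ since $i\ge1$ and $\CP_{\le h_{n}}$ has a maximum), so $H^{i}(Y_{n};F)=\coker\rho$. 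When $i=1$, the second coordinate of $\rho$ is the natural map $F(h_{n})=H^{0}(\CP_{\le h_{n}};F)\to\lim_{\langle C(h_{n}\prec\hat1)\rangle}F=H^{0}(\langle C(h_{n}\prec\hat1)\rangle;F)$, which is surjective by the stability hypothesis for the length-$1$ chain $h_{n}\prec\hat1$; hence $\rho$ is onto and $H^{1}(Y_{n};F)=0$. When $i\ge2$, also $H^{i-1}(\CP_{\le h_{n}};F)=0$, so it is enough to prove $H^{i-1}(\langle C(h_{n}\prec\hat1)\rangle;F)=0$.

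To do so, put $\CP^{*}=\langle C(h_{n}\prec\hat1)\rangle\cup\{h_{n}\}$. By \cref{lem:Q_CL-shellable} — applicable because \textbf{CL1} makes $C(h_{n}\prec\hat1)$ an initial segment of $\ll_{h_{n}\prec\hat1}$, hence compatible — the poset $\CP^{*}$ is dual CL-shellable of degree $\d(h_{n})<\d(\CP)$, and $\langle C(h_{n}\prec\hat1)\rangle=\CP^{*}\setminus\{h_{n}\}$ carries only chains of length $\le\d(h_{n})-1$. If $i-1\ge\d(h_{n})$ the group vanishes for that degree reason. Otherwise $1\le i-1\le\d(\CP^{*})-1$ and I would invoke the outer induction hypothesis for $\CP^{*}$ and $F|_{\CP^{*}}$, which finishes the step once $F|_{\CP^{*}}$ is known to be stable at $i-1$. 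This is the heart of the argument. For an unrefinable $h_{n}$-chain $d=d_{0}\prec\cdots\prec d_{i-1}=h_{n}$ in $\CP^{*}$ I would verify that the set $C_{\CP^{*}}(d)$ computed inside $\CP^{*}$ coincides with $C_{\CP}(d\prec\hat1)$ computed in $\CP$, where $d\prec\hat1$ is the length-$i$ chain obtained by appending $\hat1$ on top: because $\langle C(h_{n}\prec\hat1)\rangle$ is a down-set, the relevant covers and order relations in $\CP^{*}$ agree with those in $\CP$; and in the one remaining case $d_{1}=h_{n}$, \textbf{CL1} forces each coatom $h''$ entering the definition of $C_{\CP}(d\prec\hat1)$ to lie already in $C(h_{n}\prec\hat1)=\CP^{*}_{\prec h_{n}}$, so nothing is lost on restriction. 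Then the $\CP^{*}$-stability map for $d$ is literally the $\CP$-stability map for $d\prec\hat1$, surjective by hypothesis, so $F|_{\CP^{*}}$ is stable at $i-1$ and the induction yields $H^{i-1}(\langle C(h_{n}\prec\hat1)\rangle;F)=0$, completing the claim $H^{i}(Y_{n};F)=0$.

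It remains to pass from $Y_{n}$ to $\CP\setminus\{\hat1\}=\bigcup_{n}Y_{n}$. If $\CH$ is finite this is trivial; if $\CH\cong\NN$ (by \textbf{CL0}) then $\lim_{\CP\setminus\{\hat1\}}\NR F=\varprojlim_{n}\lim_{Y_{n}}\NR F$ is the limit of a tower with degreewise surjective structure maps (again using fibrancy of $\NR F$), giving an exact sequence $0\to\varprojlim^{1}_{n}H^{i-1}(Y_{n};F)\to H^{i}(\CP\setminus\{\hat1\};F)\to\varprojlim_{n}H^{i}(Y_{n};F)\to0$; the right-hand term is $0$ by the above, and the left-hand term vanishes because the tower $(H^{i-1}(Y_{n};F))_{n}$ is Mittag--Leffler — for $i\ge2$ by Mayer--Vietoris exactness together with $H^{i-1}(\CP_{\le h_{n+1}};F)=H^{i-1}(\langle C(h_{n+1}\prec\hat1)\rangle;F)=0$, and for $i=1$ because every section over $Y_{n}$ extends over $\CP_{\le h_{n+1}}$ exactly by the stability surjectivity $F(h_{n+1})\to\lim_{\langle C(h_{n+1}\prec\hat1)\rangle}F$. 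Thus $H^{i}(\CP\setminus\{\hat1\};F)=0$. The step I expect to be most delicate is the identification $C_{\CP^{*}}(d)=C_{\CP}(d\prec\hat1)$ — i.e. that restriction to $\CP^{*}$ preserves the stability condition one degree down — which is precisely where \textbf{CL1} (and, through \cref{lem:Q_CL-shellable}, \textbf{CL2}) is used, with \textbf{CL0} serving only to handle the non-finite case.
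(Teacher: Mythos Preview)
Your proof is correct and takes a genuinely different route from the paper's. The paper inducts on $i$ (not on $\d(\CP)$) and works internally with the cocylinder $\NR F$: an explicit lemma (\cref{CL/lem/aux_shellable}) reduces $H^i=0$ to surjectivity of $\ker\NR F^{i-1}(h)\to\lim_{\langle C(h)\rangle}\ker\NR F^{i-1}$ for every coatom $h$, and this is then verified by chasing a commutative square whose edges are handled by fibrancy of $\NR F$, a separately proven split-surjectivity lemma for restrictions to initial segments (\cref{seccion_matching}), and the inductive hypothesis applied to $\langle C(h)\rangle\cup\{h\}$. You instead treat $H^*(-;F)$ as a cohomology theory on down-sets, peel off one coatom at a time via Mayer--Vietoris, and handle the non-finite case with a Milnor $\varprojlim^1$ sequence. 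Both arguments share the key reduction --- that stability at $i$ for $\CP$ descends to stability at $i-1$ for $\langle C(h)\rangle\cup\{h\}$ through the identification $C_{\CP^*}(d)=C_\CP(d\prec\hat1)$ --- and you isolate correctly the role of \textbf{CL1} in the case $d_1=h_n$. Your approach is more conceptual and reuses standard homological machinery cleanly; the paper's is more explicit and yields \cref{seccion_matching} (a splitting, not just surjectivity) as a byproduct. One small remark: your blanket claim that ``restriction along a down-set inclusion is a degreewise epimorphism since $\NR F$ is fibrant'' is true but is not an immediate tautology --- it is proven by enlarging the down-set one degree at a time (finitely many steps, since $\d(\CP)=\d(\hat1)\in\NN$), using at each step that elements of a fixed degree are incomparable and that $\NR F(p)\to\lim_{\CP_{<p}}\NR F$ is onto. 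The paper only proves (and only needs) the special case of restriction to $\langle Q\rangle$ for $Q$ an initial segment of coatoms.
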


For covariant functors, we have the dual counterpart of \cref{def:stability} and \cref{main_theorem}.

\begin{defn}
\label{def:costability}
Let $(\CP,\ll)$ be a dual CL-shellable pair and $F\colon \CP\to R\Mod$ a functor. We say that $(\CP, \ll,F)$ has the \emph{co-stability property at} $1\leq i\leq \d(\CP)-1$ if for any unrefinable $\hat1$-chain $c$ of length $i$, the natural map
\[
\colim_{\langle C(c) \rangle} F\to F(c_0)
\]
is a monomorphism. If $(\CP,\ll)$ is clear from the context, we also say that $F$ is \emph{co-stable} at $i$.
\end{defn}

\begin{thm}
\label{dual/main_theorem}
Let $(\CP,\ll)$ be a dual CL-shellable pair, $1\le i\le \d(\CP)-1$, and $F\colon \CP\to R\Mod$. If $(\CP,\ll,F)$ has the co-stability property at $i$, then
\[
H_{i}(\CP\setminus \{\hat 1\};F)=0.
\]
\end{thm}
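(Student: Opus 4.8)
The plan is to prove \cref{dual/main_theorem} as the literal dual of \cref{main_theorem}, so I will first organize the argument for the contravariant case and then dualize. Since the statement only constrains $H^i(\CP\setminus\{\hat 1\};F)$, and by \cref{fib-rep-F(0)=0} we may as well reduce to the situation $F(\hat 0)=0$ (replace $F$ by the functor that agrees with $F$ on $\overline\CP$ and vanishes at $\hat 0$ — this changes neither the stability hypothesis, since $C(c)$ is an initial segment not containing $\hat 0$ unless $c_0 = \hat 0$ forces $C(c)=\emptyset$, nor the cohomology group in question). So assume $F(\hat 0)=0$ and work with $\overline\CP$. The key computational tool is \cref{prop:hig-lim-via-RF}: $H^i(\CP\setminus\{\hat1\};F)=H^i(\lim_{\CP_{<\hat1}}\NR F)$, and the cocylinder $\NR F$ is built inductively so that $\NR F$ restricted to any ray $\CP_{<p}$ is the cocylinder of the restriction. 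The strategy is to show that a cocycle in $\lim_{\overline\CP}\NR F$ of degree $i$ is a coboundary by a descending induction that peels off coatoms one at a time, using the recursive coatom ordering $\ll_{\hat1}$ and axioms \textbf{CL0}, \textbf{CL1}, \textbf{CL2}.

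More concretely, I would set $\CH=\CP_{\prec\hat1}$, enumerate the coatoms $h_1\ll_{\hat1} h_2\ll_{\hat1}\cdots$ (a well-order, each initial segment finite or $\cong(\NN,\le)$ by \textbf{CL0}), and write $\CP^{(k)}=\langle h_1,\dots,h_k\rangle$ for the union of the first $k$ principal ideals below coatoms. Then $\CP_{<\hat1}=\bigcup_k \CP^{(k)}$, and $\lim_{\CP_{<\hat1}}\NR F=\lim_k \lim_{\CP^{(k)}}\NR F$. The heart of the matter is the inductive step comparing $\CP^{(k)}$ to $\CP^{(k-1)}$: we have $\CP^{(k)}=\CP^{(k-1)}\cup \CP_{\le h_k}$ and $\CP^{(k-1)}\cap \CP_{\le h_k}=\langle h_1,\dots,h_{k-1}\rangle\cap \CP_{\le h_k}$, which by \textbf{CL2} is exactly $\langle C(h_k)\rangle$ inside $\CP_{<h_k}$ — this is where the combinatorial axiom does its work, identifying the overlap with the set $C(h_k)$ that appears in the stability hypothesis. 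A Mayer–Vietoris / derived-pullback argument for higher limits over this decomposition, plus the vanishing of $H^i$ over the cone-like piece $\CP_{\le h_k}$ (which has a maximum, hence all higher limits of any functor vanish above degree $0$), reduces the obstruction to lifting at stage $k$ to the cokernel of $F(h_k)\to \lim_{\langle C(h_k)\rangle} F$ in the relevant degree — but by the stability hypothesis at degree $i=\cd(h_k)$ applied to an unrefinable $\hat1$-chain through $h_k$, this map is surjective when $\cd(h_k)=i$, and for coatoms with other codegrees one recurses into the smaller dual CL-shellable poset $\langle h_1,\dots,h_k\rangle\cup\{\hat1\}$ furnished by \cref{lem:Q_CL-shellable}, whose degree strictly drops. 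The passage to the (possibly infinite) colimit over $k$ is controlled by \textbf{CL0}: since each initial segment is finite or $\NN$-shaped, the inverse system $\{\lim_{\CP^{(k)}}\NR F\}_k$ is a tower, and one checks a Mittag–Leffler / $\lim^1$-vanishing condition so that $H^i$ commutes with this limit.

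I would then dualize everything verbatim for \cref{dual/main_theorem}: replace $\lim$ by $\colim$, the cocylinder $\NR F$ by the cylinder $\NQ F$ of \cref{def:cofibrant-replacement}, \cref{prop:hig-lim-via-RF} by \cref{prop:hig-colim-via-QF}, epimorphisms by monomorphisms, initial segments in $\ll$ still governing the filtration, and the co-stability hypothesis (injectivity of $\colim_{\langle C(c)\rangle}F\to F(c_0)$) supplying the dual of the surjectivity input. The cone-like pieces $\CP_{\le h_k}$ have a maximum so all of their higher colimits vanish above degree $0$ as well, and the Mayer–Vietoris sequence for higher colimits over $\CP^{(k)}=\CP^{(k-1)}\cup\CP_{\le h_k}$ has the same shape. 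The colimit over $k$ is in fact harmless here since homology commutes with filtered colimits of coefficient systems, so the $\lim^1$ subtlety of the contravariant case disappears.

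The main obstacle I anticipate is making the inductive peeling step rigorous: one needs the right Mayer–Vietoris-type long exact sequence for higher limits (equivalently, a derived-pullback square) associated to the covering $\CP^{(k)}=\CP^{(k-1)}\cup\CP_{\le h_k}$ along the intersection $\langle C(h_k)\rangle$, and one must verify — using \textbf{CL1} (so that $C(h_k)$ is genuinely an initial segment, hence compatible with the recursive ordering and amenable to \cref{lem:Q_CL-shellable}) and \textbf{CL2} (so that the intersection is exactly $\langle C(h_k)\rangle$) — that the connecting maps are precisely the natural maps $F(h_k)\to\lim_{\langle C(h_k)\rangle}F$ whose cokernels the (co)stability hypothesis kills. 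Keeping the bookkeeping of degrees straight, so that the recursion on $\d(\CP)$ via \cref{lem:Q_CL-shellable} terminates and only ever invokes stability at the single degree $i$, is the delicate part; the explicit cocylinder/cylinder models of \cref{def:fibrant-replacement} and \cref{def:cofibrant-replacement} are what make the connecting maps computable, and \cref{lema:Q_final_category} may be needed to handle the codegree-$2$ stratum cleanly.
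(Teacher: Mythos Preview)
Your overall strategy—dualize \cref{main_theorem}, filter $\CP_{<\hat1}$ by initial segments of coatoms, identify the overlap $\CP^{(k-1)}\cap\CP_{\le h_k}=\langle C(h_k)\rangle$ via \textbf{CL2}, and recurse via \cref{lem:Q_CL-shellable}—is the right skeleton and matches the paper's. But there is a real gap in how you invoke stability at degree $i$. You claim the obstruction at $h_k$ is the cokernel of $F(h_k)\to\lim_{\langle C(h_k)\rangle}F$, killed by stability ``when $\cd(h_k)=i$'', and that for ``coatoms with other codegrees'' one recurses into $\langle h_1,\dots,h_k\rangle\cup\{\hat1\}$, ``whose degree strictly drops''. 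Both statements are wrong: every coatom has codegree $1$, so your argument as written handles only $i=1$; and the poset $\langle h_1,\dots,h_k\rangle\cup\{\hat1\}$ has degree $\d(\CP)$, not smaller.

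For $i\ge2$ the obstruction at $h_k$ is not that cokernel but $H^{i-1}(\langle C(h_k)\rangle;F)$. The correct recursion is into $\CR=\langle C(h_k)\rangle\cup\{h_k\}$, which by \cref{lem:Q_CL-shellable} is dual CL-shellable of degree $\d(h_k)<\d(\CP)$; stability at $i$ on $\CP$ restricts to stability at $i-1$ on $\CR$ (append $h_k\prec\hat1$ to any unrefinable $h_k$-chain of length $i-1$), and induction then gives $H^{i-1}(\langle C(h_k)\rangle;F)=0$. The paper packages this not via Mayer--Vietoris but at the chain level: \cref{CL/lem/aux_shellable} reduces the problem to surjectivity of $\ker\NR F^{i-1}(h)\to\lim_{\langle C(h)\rangle}\ker\NR F^{i-1}$ for each coatom $h$ (by building a preimage of a degree-$i$ cocycle one coatom at a time, using \cref{lema:Q_final_category} to check compatibility), and then a commutative square together with fibrancy of $\NR F$ and \cref{seccion_matching} reduces that surjectivity precisely to the $H^{i-1}$-vanishing above. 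Your reduction to $F(\hat0)=0$ is neither needed nor used.
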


The proofs of \cref{main_theorem} and \ref{dual/main_theorem} are dual, and thus below we only spell the details for the former one. We start with some auxiliary lemmas and notation: Let $\CP$ be a poset, and $\CQ$ be a subposet of $\CP$. Given $F:\CP^\op\to R\Mod$, the \emph{restriction morphism} induced by the inclusion $\CQ\rightarrow \CP$ is denoted by
\[
\res^{\CP}_{\CQ}\colon \lim_\CP  F\to \lim_{\CQ}F.
\]

\begin{lem}
\label{auxiliar_seccion}
Let $(\CP,\ll)$ be a dual CL-shellable pair, $Q\subset \CP_{\prec \hat 1}$ be an initial segment of $\ll_{\hat1}$ and $h$ be the least coatom in $\CP_{\prec \hat 1}\setminus Q$. Let $F\colon \CP^{\op}\to R\Mod$ be a functor such that the morphism
\[
F(h)\to \lim_{\langle C(h)\rangle}F
\]
is a split epimorphism. Then the restriction
\[
\lim_{\langle Q\cup \{h\}\rangle}F\rightarrow \lim_{\langle Q\rangle}F
\]
is also a split epimorphism.
\end{lem}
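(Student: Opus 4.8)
The plan is to realise $\langle Q\cup\{h\}\rangle$ as a union of two down-sets whose intersection is precisely $\langle C(h)\rangle$, to convert the corresponding limit into a pullback square, and finally to use that split epimorphisms of $R$-modules are stable under pullback. Write $\CA=\langle Q\rangle$ and $\CB=\CP_{\le h}$; both are down-closed in $\CP$ and $\langle Q\cup\{h\}\rangle=\CA\cup\CB$. Since $h$ is a coatom not in $Q$ and every element of $Q$ is a coatom, no $q\in Q$ can satisfy $h\le q$ (that would force $q=h$), so $h\notin\CA$ and hence $\CA\cap\CB=\CA\cap\CP_{<h}$.

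The key step — the only one I expect to require real work — is the identity $\CA\cap\CP_{<h}=\langle C(h)\rangle$. The inclusion $\supseteq$ is formal: if $x\in C(h)$ then $x\prec h$ and $x< h'\prec\hat 1$ with $h'\ll_{\hat 1}h$, and as $Q$ is an initial segment of $\ll_{\hat 1}$ missing $h$ we get $h'\in Q$, so $x\in\CA\cap\CP_{<h}$; taking down-closures keeps us inside the down-closed set $\CA\cap\CP_{<h}$. For $\subseteq$, take $x\in\CA\cap\CP_{<h}$, so $x<h$ and $x\le q_0$ for some $q_0\in Q$, necessarily with $q_0\ll_{\hat 1}h$; since $q_0$ and $h$ are distinct coatoms we have $q_0\not< h$, hence in fact $x<q_0$. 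I would then feed the pair $q_0\ll_{\hat 1}h$ of coatoms and the element $x\in\CP_{<h}\cap\CP_{<q_0}$ into axiom \textbf{CL2} (applied to the trivial $\hat 1$-chain $c=\hat 1$): this yields a coatom $h''\ll_{\hat 1}h$ and an element $q\in\CP_{\prec h}$ with $x\le q<h''$. Then $q\prec h$ and $q<h''\prec\hat 1$ with $h''\ll_{\hat 1}h$, so $q\in C(h)$ by \eqref{equ:C(h)_h_cotaom}, whence $x\in\langle C(h)\rangle$. This establishes $\CA\cap\CB=\langle C(h)\rangle$.

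With that in hand I would observe that every morphism of $\CA\cup\CB$ has both endpoints in $\CA$ or both in $\CB$: if $x\le y$ and $y\in\CA$ then $x\in\CA$ by down-closure, while if $y\notin\CA$ then $y\in\CB$ and $x\le y\le h$ puts both endpoints in $\CB$. The same holds in $(\CA\cup\CB)^{\op}$, so a cone over $F|_{\CA\cup\CB}$ is exactly a pair of cones over $F|_{\CA}$ and $F|_{\CB}$ agreeing over $F|_{\CA\cap\CB}$; that is,
\[
\lim_{\langle Q\cup\{h\}\rangle}F=\lim_{\CA\cup\CB}F=\lim_{\CA}F\times_{\lim_{\CA\cap\CB}F}\lim_{\CB}F,
\]
and under this identification $\res^{\langle Q\cup\{h\}\rangle}_{\langle Q\rangle}$ is the projection onto the first factor. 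Since $\CB=\CP_{\le h}$ has maximum $h$ and $F$ is contravariant, $\lim_{\CB}F=F(h)$, and the projection $\lim_{\CB}F\to\lim_{\CA\cap\CB}F$ is then exactly the map $F(h)\to\lim_{\langle C(h)\rangle}F$ that is assumed to be a split epimorphism. Finally, split epimorphisms of $R$-modules pull back to split epimorphisms: if $s$ is a section of $F(h)\to\lim_{\langle C(h)\rangle}F$, then $x\mapsto\bigl(x,\,s(\res^{\langle Q\rangle}_{\langle C(h)\rangle}x)\bigr)$ is a section of $\res^{\langle Q\cup\{h\}\rangle}_{\langle Q\rangle}$, which is therefore a split epimorphism. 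The whole argument rests on the poset-combinatorial identity $\langle Q\rangle\cap\CP_{<h}=\langle C(h)\rangle$, and there all the weight is carried by \textbf{CL2}; everything else is bookkeeping about cones over a union of down-sets.
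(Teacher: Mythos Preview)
Your argument is correct, and in fact you end up constructing exactly the same section as the paper does, namely $x\mapsto\bigl(x,\,s(\res^{\langle Q\rangle}_{\langle C(h)\rangle}x)\bigr)$. The difference lies in how the two proofs verify that this lands in $\lim_{\langle Q\cup\{h\}\rangle}F$. The paper invokes \cref{lem:Q_CL-shellable} to make $\langle Q\cup\{h\}\rangle\cup\{\hat 1\}$ dual CL-shellable and then \cref{lema:Q_final_category} to say the codegree-$1{,}2$ subposet is final, reducing the compatibility check to elements $l\prec h$ lying below some $q\in Q$, which are visibly in $C(h)$. You bypass both lemmas by proving directly, via a single application of \textbf{CL2}, the stronger set-theoretic identity $\langle Q\rangle\cap\CP_{<h}=\langle C(h)\rangle$, and then recognising the limit over the union of the two down-sets $\langle Q\rangle$ and $\CP_{\le h}$ as a pullback. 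Your route is more elementary and self-contained; the paper's route reuses the structural lemmas it has already set up, which is economical in context but hides the fact that only \textbf{CL2} is really needed here.
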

\begin{proof}
Let $s\colon \lim_{\langle C(h)\rangle}F\to F(h)$  be a section as in the statement. We claim that the morphism 
\[
\begin{tikzcd}
\lim_{\langle Q\rangle} F \arrow[rr, "{(\res^{\langle Q\rangle}_{\langle C(h)\rangle},\Id)}"] &  & \lim_{\langle C(h)\rangle} F \times \lim_{\langle Q\rangle} F \arrow[rr, "{(s,\Id)}"] &  & F(h)\times \lim_{\langle Q\rangle} F
\end{tikzcd}
\]
induces a section as in the thesis of the theorem. Since $Q$ is compatible with the coatom ordering and $h$ is the least element in $\CP_{\prec \hat1}\setminus Q$, it follows that $Q\cup\{h\}$ is compatible with the recursive coatom ordering. By Lemma \ref{lem:Q_CL-shellable}, $\langle Q\cup \{h\}\rangle \cup \{\hat 1\}$ is a dual CL-shellable poset and then, by Lemma \ref{lema:Q_final_category}, its subposet with objects of codegrees $1$ and $2$ is final. Thus, it is enough to check that, for any $q\in Q$, $l\in \CP_{\le h}\cap \CP_{\le q}$ with $\cd(l)=2$, and $(x_q)_{q\in Q}\in \lim_{\langle Q\rangle} F\le \prod_{q\in Q} F(q)$, we have 
\[
F(l<q)(x_q)=F(l<h)(s(\res^{\langle Q\rangle}_{\langle C(h)\rangle} (x))).
\]
But such an element $l$ belongs to $C(h)$ by \eqref{equ:C(h)_h_cotaom}, and then the equation above  holds by the definition of the section $s$.
\end{proof}

\begin{lem}
\label{seccion_matching}
Let $(\CP,\ll)$ be a dual CL-shellable pair, $Q\subset \CP_{\prec \hat 1}$ be an initial segment of $\ll_{\hat1}$ and $F\colon \CP^{\op}\to R\Mod$ be a functor. Then the restriction morphism
\[
{\lim}_{\CP_{<\hat 1}}\NR{F}\to {\lim}_{\langle Q\rangle}\NR{F}
\]
is a degreewise split epimorphism.
\end{lem}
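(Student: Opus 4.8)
\textbf{Proof plan for \cref{seccion_matching}.}

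The plan is to reduce the statement to an iterated application of \cref{auxiliar_seccion} and to transfer the split-epimorphism condition required there from the level of $F$ to the level of the fibrant replacement $\NR F$. First I would fix notation: write $\ll$ for $\ll_{\hat 1}$ and enumerate the coatoms below $\hat 1$ as an initial segment of $\ll$, so that by \textbf{CL0} the well-order $\ll$ restricted to $\CP_{\prec \hat 1}$ is either finite or isomorphic to $(\NN,\le)$. Since $\NR F$ is built ray-wise (as noted in the proof of \cref{prop:hig-lim-via-RF}, $(\NR F)\vert_{\CP_{<p}}=\NR(F\vert_{\CP_{<p}})$), the functor $\NR F$ restricted to any down-set $\langle Q'\rangle$ agrees with the cocylinder of the restriction of $F$ there; this is what lets the inductive machinery of \cref{auxiliar_seccion} apply to $\NR F$ in place of $F$.

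Next I would verify the hypothesis of \cref{auxiliar_seccion} with $F$ replaced by $\NR F$: for a coatom $h$, the natural map $\NR F(h)\to \lim_{\langle C(h)\rangle}\NR F$ must be a degreewise split epimorphism. But this is exactly the content of \cref{def:fibrant-replacement} together with \cref{prop:cocyl-factorisation}: by construction $\NR F(h)=\cocyl(\varepsilon_h)$ where $\varepsilon_h\colon F(h)\to \lim_{\CP_{<h}}F\to \lim_{\CP_{<h}}\NR F$, and the projection $\pi\colon \cocyl(\varepsilon_h)\to \lim_{\CP_{<h}}\NR F$ is a split epimorphism by \cref{prop:cocyl-factorisation}; composing with the restriction $\lim_{\CP_{<h}}\NR F\to \lim_{\langle C(h)\rangle}\NR F$ along the inclusion $\langle C(h)\rangle\subseteq \CP_{<h}$, I need that this second map is also a (degreewise) split epimorphism. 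Here I would invoke that $\langle C(h)\rangle\cup\{h\}$ is itself dual CL-shellable of degree $\d(h)$ by \cref{lem:Q_CL-shellable} — since $C(h)$ is an initial segment of $\ll_{h\prec\hat1}$ by \textbf{CL1} — so that the matching-object map for $\NR F$ over this smaller poset splits by the fibrancy of $\NR F$ restricted there; in other words, the statement we are proving is itself being used recursively on a poset of strictly smaller codegree. I would therefore set up an induction on $\d(\CP)$ (or on $|\CP_{\prec\hat1}|$ together with $\d(\CP)$) so that \cref{auxiliar_seccion}'s hypothesis for $\NR F$ is available.

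With the hypothesis in hand, the main argument is a transfinite (but, by \textbf{CL0}, at most $\omega$-length) induction over initial segments $Q$ of $\ll$ inside $\CP_{\prec\hat1}$. The base case $Q=\emptyset$ is trivial since $\lim_{\langle\emptyset\rangle}\NR F=0$ (or is the terminal object), and the successor step $Q\rightsquigarrow Q\cup\{h\}$, with $h$ the $\ll$-least coatom not in $Q$, is precisely \cref{auxiliar_seccion} applied to $\NR F$: the restriction $\lim_{\langle Q\cup\{h\}\rangle}\NR F\to \lim_{\langle Q\rangle}\NR F$ is a degreewise split epimorphism. Composing these splittings (and, at the limit stage, passing to the inverse limit of the split surjections — one must check a compatibility of the chosen sections, or simply observe that $\lim_{\CP_{<\hat1}}\NR F=\lim_{Q}\lim_{\langle Q\rangle}\NR F$ and that a compatible family of sections assembles) yields a section of $\lim_{\CP_{<\hat1}}\NR F\to \lim_{\langle Q\rangle}\NR F$ for every initial segment $Q$, and in particular for $Q=\CP_{\prec\hat1}$ itself — but more to the point, for the arbitrary initial segment $Q$ in the statement. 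The step I expect to be the main obstacle is the limit stage of the induction: when $\ll$ has order type $\omega$, one needs the sections produced at successor stages to be coherent enough that their ``composite'' is well defined on $\lim_{\CP_{<\hat1}}\NR F=\varprojlim_Q \lim_{\langle Q\rangle}\NR F$; I would handle this by choosing the section at stage $Q\cup\{h\}$ to extend the one at stage $Q$ (the formula in \cref{auxiliar_seccion} is visibly of this ``extend by a new coordinate'' form, since it is the identity on the $\lim_{\langle Q\rangle}F$ factor), so that the sections form a tower and pass to the limit, using \textbf{CL0} to guarantee the tower has length at most $\omega$ and hence no genuine transfinite bookkeeping is needed.
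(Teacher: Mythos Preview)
Your proposal is correct and follows essentially the same route as the paper's proof: induction on $\d(\CP)$, with the successor step handled by applying \cref{auxiliar_seccion} to $\NR F$, the hypothesis of that lemma being verified by factoring $\NR F(h)\to \lim_{\langle C(h)\rangle}\NR F$ through $\lim_{\CP_{<h}}\NR F$ (the first factor splits by \cref{prop:cocyl-factorisation}, the second by the inductive hypothesis on the smaller dual CL-shellable poset $\CP_{\le h}$ with initial segment $C(h)$ via \textbf{CL1} and \cref{lem:Q_CL-shellable}). The paper handles the limit stage exactly as you outline---the section at stage $Q\cup\{h\}$ is the identity on the $\lim_{\langle Q\rangle}$ component, so the tower of sections is coherent and assembles by evaluating at any finite stage containing a given element; one cosmetic point is that the paper starts the successor induction at the given $Q$ rather than at $\emptyset$, which is the more natural base case here.
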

\begin{proof}
For any $i\ge 0$, we show that the restriction morphism at degree $i$,
\[
{\lim}_{\CP_{< \hat 1}}\NR{F}^i\to {\lim}_{\langle Q \rangle} \NR F^i,
\]
is a split epimorphism, and we proceed by induction on the degree the poset $\CP$. If $d(\CP)=1$, the result is trivial. Assume then that the statement is true for degree less than $n$ and that $\d(\CP)=n$. Set $Q_0=Q$ and, for $j\geq 1$, define $Q_j=Q_{j-1}\cup \{h_j\}$ with $h_j$ the least element in $\CP_{\prec \hat 1}\setminus Q_{j-1}$. We first show that the restriction 
\[
{\lim}_{\langle Q_j\rangle}\NR{F}^i\rightarrow {\lim}_{\langle Q_{j-1}\rangle}\NR{F}^i
\]
has a section $s_j$. By \cref{auxiliar_seccion}, it is enough to show that the composite
\[
\NR F^i(h_j)\rightarrow {\lim}_{\CP_{<h_j}}\NR{F}^i\rightarrow {\lim}_{\langle C(h_j)\rangle}\NR{F}^i
\]
admits a section. But, by \cref{prop:cocyl-factorisation}, the leftmost morphism is a split epimorphism,  and hence it suffices to show that the same holds for the rightmost morphism. By \textbf{CL1} in \cref{def:recursive_coatom_ordering} and \cref{lem:Q_CL-shellable}, $\langle C(h_j)\rangle\cup \{h_j\}$ is a CL-shellable poset of degree $\d(h_j)<\d(\CP)$, so the induction hypothesis gives the existence of such a section. If $\CP_{\prec 1}\setminus Q$ is finite with $m$ elements, then the theorem is proven by considering the section $s_m\circ \cdots \circ s_2\circ s_1$. Otherwise, if $x\in {\lim}_{\langle Q \rangle} \NR F^i$, we define an element in ${\lim}_{\CP_{< \hat 1}}\NR{F}^i$ by $p\mapsto (s_j\circ \cdots \circ s_1(x))(p)$ if $p\in Q_j$. Note that, if $p\in Q_j\cap Q_{k}$, then $(s_j\circ \cdots \circ s_1(x))(p)=(s_k\circ \cdots \circ s_1(x))(p)$ by construction. Thus, this map is well defined, and it is readily checked that it is a morphism in $R\Mod$ and a section.
\end{proof}


Given a functor $F\colon \CP^{\op}\to R\Mod$, we denote by $\ker \NR{F}^i\colon \CP^{\op}\to R\Mod$ the sub-functor of $\NR F^i\colon \CP^{\op}\to R\Mod$ defined on objects by 
\[
\ker \NR F^i(p)=\ker (\partial_p \colon\NR F^i (p)\to \NR F^{i+1}(p)).
\]

\begin{lem}
\label{CL/lem/aux_shellable}
Let $(\CP,\ll)$ be a dual CL-shellable pair, ${F\colon \CP^{\op}\to R\Mod}$ be a functor, and $i\geq 1$. If, for every coatom $h\in \CP_{\prec \hat{1}}$, the morphism
\[
\ker\NR F^{i-1}(h)\to \lim_{\langle C(h)\rangle}\ker \NR F^{i-1}
\]
is an epimorphism, then $H^i(\CP\setminus \{\hat1\};F)=0$.
\end{lem}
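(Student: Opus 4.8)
The plan is to compute $H^i(\CP \setminus \{\hat 1\}; F)$ as the cohomology of the complex $\lim_{\CP_{<\hat 1}} \NR F$, which is legitimate by \cref{prop:hig-lim-via-RF} (taking $p = \hat 1$, so that $\CP_{<\hat 1} = \CP \setminus \{\hat 1\}$). Concretely, I want to show that every degree-$i$ cocycle in $\lim_{\CP_{<\hat 1}} \NR F$ is a coboundary. Since a cocycle is a compatible family $(z_p)_{p \prec \hat 1} \in \lim_{\CP_{<\hat 1}} \ker \NR F^i$, and since the differential $\partial^{i-1}$ of $\lim_{\CP_{<\hat 1}} \NR F$ factors as $\lim_{\CP_{<\hat 1}} \NR F^{i-1} \to \lim_{\CP_{<\hat 1}} \ker \NR F^i$ followed by inclusion, it suffices to prove that the map
\[
\lim_{\CP_{<\hat 1}} \NR F^{i-1} \longrightarrow \lim_{\CP_{<\hat 1}} \ker \NR F^i
\]
is surjective. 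This is the key reduction, and it turns the statement into one about the functors $\ker \NR F^{i-1}$ and $\ker \NR F^i$ separately rather than about $F$.

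Next I would build the surjectivity by transfinite induction along the coatom ordering $\ll = \ll_{\hat 1}$, mirroring the architecture of \cref{seccion_matching}. Writing $\CH = \CP_{\prec \hat 1}$ and enumerating its initial segments $Q_0 \subset Q_1 \subset \dots$ with $Q_j = Q_{j-1} \cup \{h_j\}$ (possible by \textbf{CL0}), I would show that at each step one can lift a compatible family on $\langle Q_{j-1}\rangle$ to one on $\langle Q_j \rangle$. The passage from $Q_{j-1}$ to $Q_j$ is exactly the situation of \cref{auxiliar_seccion}: to get a section of the restriction $\lim_{\langle Q_j\rangle} G \to \lim_{\langle Q_{j-1}\rangle} G$ one needs the map $G(h_j) \to \lim_{\langle C(h_j)\rangle} G$ to be a split epimorphism, for the relevant functor $G$. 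Here the role of $G$ must be played not by $\ker \NR F^i$ directly but by $\NR F^{i-1}$ together with $\ker \NR F^i$, packaged so that a lift of a cocycle comes with a chosen cochain witnessing it; concretely one works with the surjection-of-functors problem $\NR F^{i-1} \twoheadrightarrow \ker \NR F^i$ fibered over the base $\langle C(h_j) \rangle$. The hypothesis that $\ker \NR F^{i-1}(h) \to \lim_{\langle C(h)\rangle} \ker \NR F^{i-1}$ is an epimorphism for every coatom $h$ is precisely what is needed to split off the contribution of $\ker \NR F^{i-1}$; combined with the fact (from \cref{prop:cocyl-factorisation}) that $\NR F^{i-1}(h) \to \lim_{\CP_{<h}} \NR F^{i-1}$ is a split epimorphism, and with the inductive structure of \cref{lem:Q_CL-shellable} applied to $\langle C(h)\rangle \cup \{h\}$, this should give the required splitting at each coatom $h = h_j$.

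I would then assemble the local lifts into a global one: if $\CH \setminus Q$ is finite the composite of finitely many sections does the job, and if it is infinite (so the segment is isomorphic to $(\NN, \le)$ by \textbf{CL0}) one checks, exactly as at the end of the proof of \cref{seccion_matching}, that the pointwise formula $p \mapsto (s_j \circ \dots \circ s_1(x))(p)$ for $p \in Q_j$ is well-defined and $R$-linear, using that the sections are compatible on overlaps by construction. An outer induction on $\d(\CP)$ (base case $\d(\CP) = 1$ trivial) handles the appeal to the result on the smaller posets $\langle C(h)\rangle \cup \{h\}$, whose degree is $\d(h) < \d(\CP)$ by \cref{lem:Q_CL-shellable}.

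The main obstacle I anticipate is the bookkeeping in the middle step: one must carefully set up the "lift a cocycle with a witnessing cochain" functor and verify that the split epimorphism $\NR F^{i-1}(h) \to \lim_{\langle C(h)\rangle} \NR F^{i-1}$ can be chosen compatibly with the kernel sub-functor, so that sections of $\NR F^{i-1}$-restrictions restrict to sections on $\ker \NR F^i$; this is where the hypothesis on $\ker \NR F^{i-1}$ genuinely enters, and where a naive application of \cref{auxiliar_seccion} to $\ker \NR F^i$ alone would fail, since $\ker \NR F^i(h) \to \lim_{\langle C(h)\rangle} \ker \NR F^i$ is not assumed surjective. Everything else is a routine transcription of the two preceding lemmas.
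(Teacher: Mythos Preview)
Your overall strategy---compute $H^i$ as the cohomology of $\lim_{\CP_{<\hat 1}}\NR F$ via \cref{prop:hig-lim-via-RF}, then build a preimage of a given cocycle $X$ inductively along the coatom ordering $\ll_{\hat 1}$---is exactly the paper's approach. Two points, however, deserve correction.

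First, drop the outer induction on $\d(\CP)$. The paper's proof does \emph{not} recurse to the smaller posets $\langle C(h)\rangle\cup\{h\}$, and in fact you cannot: the hypothesis of the lemma concerns only the coatoms of $\CP$, and there is no reason the analogous surjectivity should hold for the coatoms of $\langle C(h)\rangle\cup\{h\}$ (namely the elements of $C(h)$). The argument is a single direct induction along the well-order $\ll_{\hat 1}$, with no nested appeal to the lemma on subposets; \cref{lem:Q_CL-shellable} and \cref{lema:Q_final_category} are used only to reduce the compatibility check at each step to the set $C(h_j)$.

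Second, the ``main obstacle'' you flag is resolved not by packaging a functor of lifts-with-witnesses or by invoking \cref{auxiliar_seccion} (which would require a \emph{split} epimorphism, stronger than the hypothesis), but by a direct correction trick. Given $Y_{j-1}\in\lim_{\CP_{j-1}}\NR F^{i-1}$ with $\partial Y_{j-1}=X_{j-1}$, first use acyclicity of $\NR F(h_j)$ to pick any $\tilde y_j$ with $\partial\tilde y_j=x_j$. The discrepancy
\[
z_l \;=\; \NR F^{i-1}(l<h_k)(y_k) - \NR F^{i-1}(l<h_j)(\tilde y_j),\qquad l\in C(h_j),
\]
is independent of the choice of $h_k\ll h_j$ with $l<h_k$ and satisfies $\partial z_l=0$ because $X$ lies in the limit; thus $Z\in\lim_{\langle C(h_j)\rangle}\ker\NR F^{i-1}$. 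The hypothesis now gives $w\in\ker\NR F^{i-1}(h_j)$ restricting to $Z$, and $y_j:=\tilde y_j+w$ does the job. This is where the hypothesis enters, and it requires no splitting, no abstract section machinery, and no inner recursion.
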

\begin{proof}
By \cref{prop:hig-lim-via-RF}, $H^i(\CP\setminus \{\hat1\};F)=H^i(\lim_{\CP\setminus\{\hat 1\}}\NR F)$, and hence we want to prove that the sequence
\[
\lim_{\CP\setminus\{\hat1\}}\NR F^{i-1}\overset\partial\longrightarrow \lim_{\CP\setminus\{\hat1\}}\NR F^i\overset\partial\longrightarrow \lim_{\CP\setminus\{\hat1\}}\NR F^{i+1}
\]
is exact. We write $\ll=\ll_{\hat{1}}$ for the linear order in $\CP_{\prec\hat{1}}$ and denote 
\[
\CP_{\prec\hat{1}}=h_0\ll h_1\ll \ldots \ll h_j\ll\ldots\quad\text{and}\quad\CP_j=\langle h_0,\dots ,h_j\rangle
\]
where, by \textbf{CL0}, either $0\leq j\leq m$ for some natural number $m$ or $j\geq 0$. So assume that $\partial X=0$ for an element 
\[
X=(x_j)\in \lim_{\CP\setminus\{\hat 1\}}\NR F^i\le \prod_{j}\NR F^i(h_j).
\]
We construct, for any $j$, an element 
\[
Y_j=(y_0,\ldots,y_j)\in\lim_{\CP_j} \NR F^{i-1}\le \prod_{k\le j}\NR F^{i-1}(h_k)
\]
such that $\partial Y_j=X_j=(x_0, \ldots,x_j)$. For $j=0$, as $\partial x_0=0$ and, by \cref{prop:cocyl-factorisation}, $H^i(\NR F(h_0))=0$, we have an element $y_0\in \NR F^{i-1}(h_0)$ such that $\partial y_0=x_0$. Now, assume that there exists $Y_{j-1}$ with $\partial Y_{j-1}=X_{j-1}$. Again, as $\partial x_{j}=0$, we find $\tilde y_{j}$ with $\partial \tilde y_j=x_j$. Consider the element $Z=(z_l)_{l\in C(h_j)}$ given by 
\[
z_l=\NR F^{i-1}(l < h_k)(y_k)-\NR F^{i-1}(l < h_j)(\tilde y_j),
\]
where, for every $l\in C(h_j)$, we have chosen $h_k\in \CP_{\prec \hat 1}$ with $h_k\ll h_j$ and $l<h_k$. Note that the definition of $z_l$ does not depend on the chosen elements $h_k$'s. In addition, we have
\begin{align*}
\partial z_l=&\NR F^{i-1}(l < h_k)(\partial y_k)-\NR F^{i-1}(l < h_j)(\partial \tilde y_j)\\
=&\NR F^{i-1}(l < h_k)(x_k)-\NR F^{i-1}(l < h_j)(x_j)
\end{align*}
and this is zero as $X$ belongs to the limit. Thus, $Z$ belongs to $\lim_{\langle C(h_j)\rangle}\ker\NR F^{i-1}$ and, by hypothesis, there exists $w\in \ker\NR F^{i-1}(h_j)$ with 
\[
\NR F^{i-1}(l < h_j)(w)=z_l
\]
for all $l\in C(h_j)$. We set $y_j=\tilde y_j + w$ and check that $Y_j\in\lim_{\CP_j} \NR F^{i-1}$:  By Lemma \ref{lem:Q_CL-shellable}, $\langle \{h_0,\ldots,h_j\}\rangle \cup \{\hat 1\}$ is a dual CL-shellable poset and then, by Lemma \ref{lema:Q_final_category}, its subposet with objects of codegrees $1$ and $2$ is final. Thus, it is enough to show that, for any $l\in \CP_{\le h_j}\cap \CP_{\le h_k}$  with $0\leq k\leq j-1$ and $\cd(l)=2$, we have
\[
\NR F^{i-1}(l<h_k)(y_k)=\NR F^{i-1}(l< h_j)(y_j).
\]
But, by Equation \eqref{equ:C(h)_h_cotaom}, the element $l$ belongs to $C(h_j)$, and thus we have
\[
\NR F^{i-1}(l< h_j)(\tilde y_j)+\NR F^{i-1}(l< h_j)(w)=\NR F^{i-1}(l< h_j)(\tilde y_j)+z_l=\NR F^{i-1}(l < h_k)(y_k).
\]
In addition, by construction we have $\partial y_j =\partial\tilde{y_j}+\partial w=x_j$, and hence $\partial Y_j=X_j$. Finally, the element constructed inductively over all indexes $j$, i.e., either $Y=Y_m=(y_0,\ldots,y_m)$ or $Y=(y_0,\ldots,y_j,\ldots)$, belongs to $\lim_{\CP\setminus\{\hat1\}}\NR F^{i-1}$ and satisfies that $\partial Y =X$. We are done.
\end{proof}


\begin{proof}[Proof of {\cref{main_theorem}}]
We proceed by induction on $i$. For $i=1$, by \cref{CL/lem/aux_shellable}, it is enough to check that, for every coatom $h$, the morphism
\[
\ker \NR F^0(h)\to \lim_{\langle C(h)\rangle}\ker\NR F^0
\]
is an epimorphism.  But $\ker \NR F^0$ and $F$ are naturally isomorphic, and hence, this condition is exactly the stability property at $i=1$. Next, assume that the theorem is true for $1\le j<i$. Again, by applying \cref{CL/lem/aux_shellable}, we have to check that, for every coatom $h$ the morphism
\[
\ker \NR F^{i-1}(h)\to \lim_{\langle C(h)\rangle}\ker\NR F^{i-1}
\]
is an epimorphism. Consider the following commutative diagram
\[
\begin{tikzcd}
\ker \NR F^{i-1}(h) \arrow[rr]     &                               & \lim_{\langle C(h)\rangle}\ker \NR F^{i-1}      \\
\NR F^{i-2}(h) \arrow[r] \arrow[u] & \lim_{\CP_{< h}} \NR F^{i-2} \arrow[r] & \lim_{\langle C(h)\rangle}\NR F^{i-2}, \arrow[u]
\end{tikzcd}
\]
where the vertical morphisms are the respective differentials. We prove that the top arrow is an epimorphism by proving that every other morphism in the diagram is so. By exactness of $\NR F(h)$ and because $i\geq 2$, the leftmost vertical map is an epimorphism. The bottom arrow is onto because $\NR F$ is a fibrant functor and because of \cref{seccion_matching}. Thus, we are left with the rightmost vertical arrow: By \cref{lem:Q_CL-shellable}, the poset $\CR=\langle C(h)\rangle\cup\{h\}$ is a dual CL-shellable poset with recursive coatom ordering $\ll'$. Moreover, $(\CR,\ll',F\vert_{\CR})$ has the stability property at $i-1$: Let $c'$ be an unrefinable $h$-chain of length $i-1$. Then $c=c'\prec \hat 1$ is an unrefinable $\hat 1$-chain in $\CP$ of length $i$, and thus, by hypothesis, the following map is surjective,
\[
F(c'_0)=F(c_0)\to \lim_{\langle C(c) \rangle} F=\lim_{\langle C'(c') \rangle} F \vert_{\CR},
\]
where $C(c)$ and $C'(c')$ are taken with respect to $\ll$ and $\ll'$ respectively and they are equal because the length of $c'$ is at least $2$, see proof of \cref{lem:Q_CL-shellable}. Thus, by induction, $H^{i-1}(\langle C(h)\rangle; F\vert_{\CR})=0$ and hence the rightmost arrow in the diagram above is surjective.
\end{proof}

\begin{exm}
\label{ex:constant-functor}
Let $\CP$ be a dual CL-shellable poset, $M\in R\Mod$, and let $F\colon \CP^\op\to R\Mod$ be the extension by zero of the constant functor $\underline{M}\colon \overline{\CP}^\op\to R\Mod$. By \cite[Theorem 5.8 and 5.9]{BWnonpure}, the order complex $K$ of $\overline\CP^\op$ has the homotopy type of a wedge of spheres. The stability property can detect dimensions where no sphere shows up. Let $1\le i\le d(\CP)-1$: If for every unrefinable $\hat 1$-chain $c$, $C(c)$ does not contain any atom of $\CP$, then it is straightforward $F$ has the stability property at $i$ (see Lemmas \ref{lema:Q_final_category} and \ref{lem:Q_CL-shellable}). Then, by \cref{main_theorem}, we have
\[
H^i(K;M)\cong H^i(\CP\setminus\{\hat 1\}; F)=0.
\]
In particular, if $\CP$ is pure, the previous equation holds for all $i=1,\dots, \d(\CP)-3$.
\end{exm}

\section{Non-vanishing (co)homology of stable functors}\label{sec:non-vanishing_stable_functor}
In this section, we describe the top (co)homology group of a functor $F$ with $F(\hat 0)=0$ over a finite pure dual CL-shellable poset. 
Note that, by \cref{rmk:stability_property_cases}, such a functor is $\d(\CP)-1$ stable, so that 
$\d(\CP)-2$ is the highest degree for which $H^*(\CP\setminus\{\hat 1\};F)$ can possibly be different from zero. In addition, $F$ is $\d(\CP)-2$ stable if and only if the following map is surjective, 
\[
F(c_0)\to \bigoplus_{a\in C(c)} F(a).
\]
In \cref{thm:exact_sequence_for_atoms} below, we drop this condition of stability and we describe the possibly non-trivial cohomology at degree $\d(\CP)-2$. We start describing the top (co)homology group of atomic functors over CL-shellable posets. 

\begin{defn}
Let $\CP$ be a poset, $p\in \CP$ and $M\in R\Mod$. The contravariant (resp. covariant) \emph{atomic functor} at $p$ with value $M$ is the functor $A(p,M)\colon \CP^\op\to R\Mod$ (resp. ${A(p,M)\colon \CP\to R\Mod}$) defined on objects by
\[
A(p,M)(q)=\begin{cases}
M&\text{if }p=q,\\
0&\text{otherwise.}
\end{cases}
\]
\end{defn}

Note that in the previous definition we have abused notation and denoted by $A(p,M)$ both contravariant and covariant atomic functors. Although the Möbius function makes sense for locally finite posets, here we restrict ourselves to finite posets for simplicity.

\begin{defn}
For a finite poset $\CP$ and elements $p\leq q$ we define 
\[
\mu(p,q)=\begin{cases} 
1&\text{ if $p=q$,}\\
-\sum_{p<r\leq q} \mu(r,q)&\text{otherwise.}
\end{cases}
\]
\end{defn}

\begin{prop}\label{prop:ho_and_coho_of_atomic_functor}
Let $\CP$ be a finite pure dual CL-shellable poset, $p\in \CP\setminus \{\hat{1}\}$, and $M\in R\Mod$. Then 
\[
H^i(\CP\setminus \{\hat{1}\};A(p,M))\cong H_i(\CP\setminus \{\hat{1}\};A(p,M))\cong \begin{cases} M^{|\mu(p,\hat 1)|}&\text{if }i=\cd(p)-1,\\
0&\text{otherwise.}
\end{cases}
\]
\end{prop}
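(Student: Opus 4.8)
The plan is to reduce both $H^\ast(\CP\setminus\{\hat 1\};A(p,M))$ and $H_\ast(\CP\setminus\{\hat 1\};A(p,M))$ to the reduced simplicial (co)homology of the order complex of the open interval $(p,\hat 1)$, and then to read off that (co)homology from dual CL-shellability. First I would invoke the standard cochain (resp.\ chain) complex computing higher limits (resp.\ colimits) over a poset: if $\CQ=\CP\setminus\{\hat 1\}$ and $F\colon\CQ^\op\to R\Mod$, then $H^\ast(\CQ;F)$ is the cohomology of $C^\bullet$ with $C^n=\prod_{q_0<\cdots<q_n}F(q_0)$, where the $d_0$-face applies $F$ to $q_0<q_1$ and the other faces are the identity of $F(q_0)$; dually, with direct sums, for a covariant $F$. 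Taking $F=A(p,M)$, every summand with $q_0\neq p$ is zero, and for $q_0=p$ the $d_0$-face is zero as well since it factors through $A(p,M)(q_1)=0$. Matching a chain $p=q_0<q_1<\cdots<q_n$ with the $(n-1)$-simplex $\{q_1<\cdots<q_n\}$ of the order complex $\Delta((p,\hat 1))$ (with the empty simplex when $n=0$), these complexes become, up to an overall sign on the differential, the reduced simplicial cochain (resp.\ chain) complex of $\Delta((p,\hat 1))$ with coefficients in $M$, shifted by one. Since $p\neq\hat 1$ we have $\CQ_{>p}=\CP_{>p}\setminus\{\hat 1\}=(p,\hat 1)$, so this gives
\[
H^i(\CP\setminus\{\hat 1\};A(p,M))\cong\widetilde H^{\,i-1}(\Delta((p,\hat 1));M),\qquad H_i(\CP\setminus\{\hat 1\};A(p,M))\cong\widetilde H_{i-1}(\Delta((p,\hat 1));M).
\]

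Next I would determine the homotopy type of $\Delta((p,\hat 1))$. The closed interval $[p,\hat 1]=\CP_{\ge p}$ is finite and pure---every maximal chain of $[p,\hat 1]$ extends to a maximal chain of $\CP$---and it is dual CL-shellable, since for finite posets \cref{def:recursive_coatom_ordering} agrees with the notion of \cite{BWnonpure} and (dual) CL-shellability passes to all closed intervals. By purity, $\d([p,\hat 1])=\cd(p)$ and $(p,\hat 1)=\overline{[p,\hat 1]}$, so \cite[Theorems 5.8 and 5.9]{BWnonpure} (as already used in \cref{ex:constant-functor}) shows that $\Delta((p,\hat 1))$ is homotopy equivalent to a wedge of spheres of dimension $\cd(p)-2$; in the boundary case $\cd(p)=1$ the interval $(p,\hat 1)$ is empty, $\Delta((p,\hat 1))=\{\emptyset\}$, and $\widetilde H^{-1}=\widetilde H_{-1}=M$, which the count below still yields.

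Finally I would count the spheres. A finite wedge of $k$ copies of $S^{\cd(p)-2}$ has free reduced integral homology, concentrated in degree $\cd(p)-2$ and of rank $k$; hence by universal coefficients its reduced (co)homology with coefficients in $M$ equals $M^{k}$ in degree $\cd(p)-2$ and is $0$ otherwise, and in particular $\widetilde H^\ast$ and $\widetilde H_\ast$ coincide there. To evaluate $k$, Philip Hall's theorem identifies the reduced Euler characteristic of $\Delta((p,\hat 1))$ with $\mu(p,\hat 1)$, so $(-1)^{\cd(p)}k=\mu(p,\hat 1)$ and, as $k\ge 0$, $k=|\mu(p,\hat 1)|$. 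Feeding $i-1=\cd(p)-2$, i.e.\ $i=\cd(p)-1$, into the isomorphisms of the first paragraph gives the statement. The step I expect to need the most care is the first one: making precise the identification of the bar complex of the atomic functor with the shifted, reduced simplicial complex of $(p,\hat 1)$---especially the signs and the $n=0$ (empty-simplex) term---while the rest is a routine application of standard shellability and Möbius-function facts.
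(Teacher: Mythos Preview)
Your argument is correct and takes a genuinely different route from the paper. The paper embeds the atomic functor $A(p,M)$ into the free functor $B$ (with $B(q)=M$ for $q\ge p$ and $0$ otherwise), uses that $B$ is $\lim$-acyclic, and reads off from the long exact sequence that $H^i(\CP\setminus\{\hat 1\};A(p,M))\cong H^{i-1}(|(p,\hat 1)|;M)$ for $i\ge 2$; the cases $i\le 1$ and small $\cd(p)$ are then handled by ``a short explicit computation.'' You instead plug $A(p,M)$ directly into the bar (co)chain complex and observe that only chains beginning at $p$ survive and that the $d_0$-face vanishes, which identifies the complex on the nose with the shifted reduced simplicial (co)chain complex of $\Delta((p,\hat 1))$. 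Both approaches converge on the same object---the order complex of the open interval---and finish via dual CL-shellability of $[p,\hat 1]$ and the M\"obius count (the paper via \cite[Proposition~5.7]{BWnonpure}, you via Philip Hall's theorem). Your approach is more elementary and has the pleasant feature of treating all degrees uniformly, including the boundary cases $i=0,1$ and $\cd(p)=1$, with no separate computation needed; the paper's approach, on the other hand, stays within the functorial and model-categorical framework developed earlier in the paper and illustrates how the acyclicity results of \cref{main_theorem} can be invoked.
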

\begin{proof}
Consider the short exact sequence of contravariant functors,
\[
0\to A(p,M)\to B\to C\to 0,
\]
and the short exact sequence of covariant functors,
\[
0\to D \to E \to A(p,M) \to 0,
\]
where $B(q)=E(q)=M$ if $p\leq q$ and they are $0$ otherwise, and $C$ and $D$ are defined, respectively, as the appropriate cokernel or kernel. Then $B$ and $E$ are free functors and hence $\lim$-acyclic and $\colim$-acyclic respectively (also by \cite[Theorems $B$ and $B^*$]{CD23} or Theorems~\ref{main_theorem} and \ref{dual/main_theorem}). From the associated long exact sequences we deduce that, for $i\geq 2$,
\begin{align}
H^i(\CP\setminus \{\hat{1}\};A(p,M))&\cong H^{i-1}(\CP\setminus \{\hat{1}\};C)\cong H^{i-1}(|(\CP\setminus\{\hat{1}\})_{>p}|; M)\text{ and }\label{equ:less_atomic}\\
H_i(\CP\setminus \{\hat{1}\};A(p,M))&\cong H_{i-1}(\CP\setminus \{\hat{1}\};D)\cong H_{i-1}(|(\CP\setminus\{\hat{1}\})_{>p}|; M)\text{,}\nonumber
\end{align}
where in the last isomorphism of each row we have used that $\CP_{>p}$ is a right ideal of $\CP$, that $C$ and $D$ vanish outside this ideal, and that $C$ and $D$ are constant on this ideal. Since $\CP$ is a finite pure dual CL-shellable poset we have, by \cite[Lemma 5.6]{BWnonpure}, that so it is $\CP_{\ge p}$. Thus, by \cite[Theorems 5.9]{BWnonpure}, its integral homology is
\[
\widetilde H_j(|(\CP\setminus\{\hat{1}\})_{>p}|; \ZZ)=\begin{cases} \ZZ^{\delta}&\text{if }j=\cd(p)+1,\\
0&\text{otherwise.}
\end{cases}
\]
where $\delta$ is the number of maximal falling chains of length $\cd(p)+1$ in $\CP_{\geq p}$ (with the notion of length in this work). Thus, by \cite[Proposition 5.7]{BWnonpure}, $\delta$ is exactly the absolute value $|\mu(p,\hat 1)|$. If $\cd(p)\geq 3$ and $i\geq 2$, we are done by Equation \eqref{equ:less_atomic} and universal coefficient theorem. Otherwise, a short explicit computation suffices. 
\end{proof}

\begin{thm}\label{thm:exact_sequence_for_atoms}
Let $(\CP,\ll)$ be a dual CL-shellable pair with $\CP$ pure and finite and with $\d(\CP)\geq 3$ and consider a functor $F\colon \CP^\op\to R\Mod$ with $F(\hat 0)=0$. Assume that there exists a subset $\CA$ of atoms of $\CP$ such that for every unrefinable $\hat 1 $-chain $c$ of length $\d(\CP)-2$, we have an epimorphism
\[
F(c_0)\to \bigoplus_{a\in C(c)\setminus \CA} F(a).
\]
Then there is an exact sequence with central arrow the connecting homomorphism,
\[
0\to H^{\d(\CP)-3}(\CP\setminus\{\hat 1\};F)\to H^{\d(\CP)-3}(\CP\setminus \CA;F) \to \bigoplus_{a\in \CA} F(a)^{|\mu(a,\hat 1)|}\to H^{\d(\CP)-2}(\CP\setminus\{\hat 1\};F)\to 0.
\]
\end{thm}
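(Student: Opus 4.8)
The plan is to build a short exact sequence of functors that "resolves away" the atoms in $\CA$ and then run the associated long exact sequence in cohomology, using \cref{prop:ho_and_coho_of_atomic_functor} to identify the middle term. Concretely, I would define $G\colon \CP^\op\to R\Mod$ by $G(q)=F(q)$ for $q\notin \CA$, $G(a)=0$ for $a\in\CA$, with the structure maps induced from $F$; the point of the hypothesis is precisely that the stability condition for $G$ at degree $\d(\CP)-2$ holds, because the surjection $F(c_0)\to\bigoplus_{a\in C(c)\setminus\CA}F(a)$ is exactly the map $G(c_0)\to\bigoplus_{a\in C(c)}G(a)=\lim_{\langle C(c)\rangle}G$ (the $\CA$-atoms contribute $0$). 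There is a short exact sequence of contravariant functors
\[
0\to K\to F\to G\to 0,
\]
where $K$ is the kernel; since $F$ and $G$ agree away from $\CA$ and $G$ vanishes on $\CA$, the functor $K$ is supported on $\CA$ and in fact $K\cong\bigoplus_{a\in\CA}A(a,F(a))$ as the atoms of $\CP$ are pairwise incomparable. Now $G$ is $(\d(\CP)-2)$-stable by the hypothesis, and it is $(\d(\CP)-1)$-stable automatically since $G(\hat 0)=F(\hat 0)=0$ (see \cref{rmk:stability_property_cases}); so by \cref{main_theorem}, $H^i(\CP\setminus\{\hat 1\};G)=0$ for all $i\ge\d(\CP)-2$, and since $G$ also vanishes below degree $0$ there is nothing above. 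Meanwhile $H^i(\CP\setminus\{\hat 1\};K)=\bigoplus_{a\in\CA}H^i(\CP\setminus\{\hat 1\};A(a,F(a)))$, which by \cref{prop:ho_and_coho_of_atomic_functor} is $\bigoplus_{a\in\CA}F(a)^{|\mu(a,\hat 1)|}$ concentrated in degree $\cd(a)-1=\d(\CP)-2$ (purity gives $\cd(a)=\d(\CP)-1$ for every atom $a$).

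Feeding these computations into the long exact sequence
\[
\cdots\to H^{i}(\CP\setminus\{\hat 1\};K)\to H^{i}(\CP\setminus\{\hat 1\};F)\to H^{i}(\CP\setminus\{\hat 1\};G)\to H^{i+1}(\CP\setminus\{\hat 1\};K)\to\cdots
\]
collapses everything: since $H^i(K)$ lives only in degree $\d(\CP)-2$ and $H^i(G)$ vanishes for $i\ge\d(\CP)-2$, one gets $H^i(F)\cong H^i(G)$ for $i\le\d(\CP)-4$, the relevant four-term piece
\[
0\to H^{\d(\CP)-3}(\CP\setminus\{\hat 1\};F)\to H^{\d(\CP)-3}(\CP\setminus\{\hat 1\};G)\to H^{\d(\CP)-2}(\CP\setminus\{\hat 1\};K)\to H^{\d(\CP)-2}(\CP\setminus\{\hat 1\};F)\to 0
\]
around degree $\d(\CP)-2$, and $H^i(F)=0$ for $i\ge\d(\CP)-1$. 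The last step is to replace $H^*(\CP\setminus\{\hat 1\};G)$ by $H^*(\CP\setminus\CA;G)=H^*(\CP\setminus\CA;F)$: since $G$ vanishes on $\CA$, its cohomology over $\CP\setminus\{\hat 1\}$ equals its cohomology over $\CP\setminus(\CA\cup\{\hat 1\})$ — this is the analogue of \cref{fib-rep-F(0)=0}, the extension-by-zero of a functor does not change higher limits, applied one atom at a time (each $a\in\CA$ is a minimal element of $\CP\setminus\{\hat 1\}$ on which $G$ is zero) — and on $\CP\setminus\CA$ the functors $G$ and $F$ are literally equal; identifying the middle arrow with the connecting homomorphism is then formal.

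The main obstacle I anticipate is the bookkeeping needed to justify that $G$ is genuinely $(\d(\CP)-2)$-stable from the stated hypothesis: one must check that for an unrefinable $\hat 1$-chain $c$ the set $\langle C(c)\rangle$ relevant to the stability of $G$ really produces $\lim_{\langle C(c)\rangle}G=\bigoplus_{a\in C(c)}G(a)=\bigoplus_{a\in C(c)\setminus\CA}F(a)$, which uses \cref{rmk:stability_property_cases} (that $C(c)$ consists of atoms when the chain has length $\d(\CP)-2$ and $F(\hat 0)=0$) together with purity. A secondary technical point is verifying that the extension-by-zero step does not disturb cohomology in the non-pure-looking poset $\CP\setminus\{\hat 1\}$; but since each atom is minimal there, \cref{fib-rep-F(0)=0} (or rather the observation behind it, that removing a minimal object on which the functor vanishes preserves the fibrant replacement) applies directly. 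Everything else is a routine diagram chase through the long exact sequence.
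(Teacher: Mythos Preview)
Your proposal is correct and follows essentially the same approach as the paper: the short exact sequence $0\to K\to F\to G\to 0$ with $K=\bigoplus_{a\in\CA}A(a,F(a))$ is exactly the paper's sequence $0\to E\to F\to G\to 0$, and the remaining steps (stability of $G$ at $\d(\CP)-2$ via \cref{main_theorem}, identification of $H^*(K)$ via \cref{prop:ho_and_coho_of_atomic_functor}, and the right-ideal/extension-by-zero argument to replace $H^*(\CP\setminus\{\hat 1\};G)$ by $H^*(\CP\setminus\CA;F)$) match the paper's argument line for line.
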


\begin{proof}
Consider the short exact sequence of contravariant functors 
\[
0\to E\to F\to G\to 0,
\]
where $E=\bigoplus_{a\in \CA} A(a,F(a))$, $G$ is the quotient of $F$ by $E$. Since $F(\hat 0)=0$, by \cref{fib-rep-F(0)=0}, it is equivalent to compute the higher limits of the involved functor over $\overline\CP$.  Consider as well as the following portion of the associated long exact sequence, where we write  $n=\d(\CP)$ for brevity,
\[
H^{n-3}(\overline\CP;E)\to H^{n-3}(\overline\CP;F)\to H^{n-3}(\overline\CP;G)\to H^{n-2}(\overline\CP;E)\to H^{n-2}(\overline\CP;F)\to H^{n-2}(\overline\CP;G).
\]
Because $\overline\CP\setminus \CA$ is a right ideal of $\overline\CP$ and $G$ vanishes on $\CA$, we have the isomorphism
\[
H^{n-3}(\overline\CP;G)\cong H^{n-3}(\overline\CP\setminus \CA;G)=H^{n-3}(\overline\CP\setminus \CA;F).
\]
Moreover, the hypotheses imply that $G$ has the stability property at $\d(\CP)-2$ and \cref{main_theorem} shows then that the rightmost term is zero. The description of $H^{n-3}(\overline\CP;E)$ and $H^{n-2}(\overline\CP;E)$ follows from \cref{prop:ho_and_coho_of_atomic_functor} as $\cd(a)=\d(\CP)-1$ for every $a\in \CA$.
\end{proof}

\begin{cor}\label{cor:exact_sequence_for_atoms_and_stable_functor}
Assume the hypotheses of Theorem \ref{thm:exact_sequence_for_atoms}, that $\d(\CP)\geq 4$, and that $F$ has the stability property at $\d(\CP)-3$. Then we have a short exact sequence
\[
0 \to H^{\d(\CP)-3}(\CP\setminus \CA;F) \to \bigoplus_{a\in \CA} F(a)^{|\mu(a,\hat 1)|}\to H^{\d(\CP)-2}(\CP\setminus\{\hat 1\};F)\to 0.
\]
\end{cor}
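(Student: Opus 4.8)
The plan is to derive Corollary~\ref{cor:exact_sequence_for_atoms_and_stable_functor} as a direct consequence of Theorem~\ref{thm:exact_sequence_for_atoms}, so the only thing that needs to happen is to kill the leftmost term of the four-term exact sequence
\[
0\to H^{\d(\CP)-3}(\CP\setminus\{\hat 1\};F)\to H^{\d(\CP)-3}(\CP\setminus \CA;F) \to \bigoplus_{a\in \CA} F(a)^{|\mu(a,\hat 1)|}\to H^{\d(\CP)-2}(\CP\setminus\{\hat 1\};F)\to 0
\]
produced by that theorem. First I would invoke Theorem~\ref{thm:exact_sequence_for_atoms} verbatim, which applies because all of its hypotheses (namely $\CP$ pure and finite, $\d(\CP)\ge 3$, $F(\hat 0)=0$, and the existence of the subset $\CA$ of atoms with the stated epimorphism onto $\bigoplus_{a\in C(c)\setminus\CA}F(a)$) are among the hypotheses assumed in the corollary; the extra assumption $\d(\CP)\ge 4$ is only needed so that $\d(\CP)-3\ge 1$, i.e.\ so that the degree at which we want to apply stability lies in the allowed range $1\le i\le \d(\CP)-1$ of Definition~\ref{def:stability}.

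Next I would observe that the hypothesis ``$F$ has the stability property at $\d(\CP)-3$'' lets us apply Theorem~\ref{main_theorem} with $i=\d(\CP)-3$, which is legitimate precisely because $1\le \d(\CP)-3\le \d(\CP)-1$ under the assumption $\d(\CP)\ge 4$. This yields
\[
H^{\d(\CP)-3}(\CP\setminus\{\hat 1\};F)=0.
\]
Substituting this vanishing into the four-term exact sequence collapses it to the asserted short exact sequence
\[
0 \to H^{\d(\CP)-3}(\CP\setminus \CA;F) \to \bigoplus_{a\in \CA} F(a)^{|\mu(a,\hat 1)|}\to H^{\d(\CP)-2}(\CP\setminus\{\hat 1\};F)\to 0,
\]
and the middle map is still the connecting homomorphism inherited from Theorem~\ref{thm:exact_sequence_for_atoms}.

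I do not anticipate any genuine obstacle here, since the argument is purely formal bookkeeping on top of two already-established results; the only point requiring a moment of care is the bookkeeping on degree ranges, namely checking that $\d(\CP)\ge 4$ is exactly what is needed for both the stability hypothesis at $\d(\CP)-3$ to be meaningful and for Theorem~\ref{main_theorem} to apply at that index. One could optionally remark that when $\d(\CP)=3$ the corollary would degenerate (the term $H^0(\CP\setminus\{\hat1\};F)$ need not vanish), which is why the stronger hypothesis $\d(\CP)\ge 4$ is imposed rather than merely $\d(\CP)\ge 3$.
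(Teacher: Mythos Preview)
Your proposal is correct and follows exactly the same route as the paper: invoke Theorem~\ref{thm:exact_sequence_for_atoms} to obtain the four-term exact sequence, then apply Theorem~\ref{main_theorem} at $i=\d(\CP)-3$ (using $\d(\CP)\ge 4$ to ensure this lies in the valid range) to kill the leftmost term. The paper's own proof is a single sentence to this effect, and your additional remarks on the degree bookkeeping are accurate but not strictly required.
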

\begin{proof}
This is immediate from \cref{thm:exact_sequence_for_atoms} and \cref{main_theorem}.
\end{proof}

We state without proof the dual results.

\begin{thm}\label{thm:exact_sequence_for_atoms_covariant_colimit}
Let $(\CP,\ll)$ be a dual CL-shellable pair with $\CP$ pure and finite and with $\d(\CP)\geq 3$ and consider a functor $F\colon \CP\to R\Mod$ with $F(\hat 0)=0$. Assume that there exists a subset $\CA$ of atoms of $\CP$ such that for every unrefinable $\hat 1 $-chain $c$ of length $\d(\CP)-2$, we have a monomorphism 
\[
\bigoplus_{a\in C(c)\setminus \CA} F(a)\to F(c_0).
\]
Then there is an exact sequence with central arrow the connecting homomorphism,
\[
0\to H_{\d(\CP)-2}(\CP\setminus \{\hat 1\};F)\to \bigoplus_{a\in \CA} F(a)^{|\mu(a,\hat 1)|}\to H_{\d(\CP)-3}(\CP\setminus \CA;F)\to H_{\d(\CP)-3}(\CP\setminus\{\hat 1\};F)\to 0.
\]
\end{thm}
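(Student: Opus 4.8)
The plan is to dualise the proof of \cref{thm:exact_sequence_for_atoms} step by step, interchanging limits with colimits, the subfunctor $E$ there with a quotient functor, and monomorphisms with epimorphisms. Throughout write $n=\d(\CP)$.

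First I would produce the short exact sequence of covariant functors
\[
0\to D\to F\to E\to 0,\qquad E=\bigoplus_{a\in\CA}A(a,F(a)),
\]
where $F\to E$ is the direct sum of the natural transformations $F\to A(a,F(a))$ that equal the identity at $a$ and vanish elsewhere. These are natural precisely because $F(\hat 0)=0$ and, inside $\CP$, an atom $a$ receives a morphism only from $\hat 0$. Hence the kernel $D$ satisfies $D(a)=0$ for $a\in\CA$, $D=F$ on $\CP\setminus\CA$, and $D(\hat 0)=0$. Since $F(\hat 0)=D(\hat 0)=0$, the colimit counterpart of \cref{fib-rep-F(0)=0} lets me compute all the higher colimits below over $\overline\CP$, with $H_i(\CP\setminus\{\hat 1\};-)\cong H_i(\overline\CP;-)$ for both $F$ and $D$.

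Next I would assemble the ingredients of the homology long exact sequence of the displayed sequence. By \cref{prop:ho_and_coho_of_atomic_functor}, and because every atom of the pure poset $\CP$ has codegree $n-1$, one has $H_i(\overline\CP;E)\cong\bigoplus_{a\in\CA}F(a)^{|\mu(a,\hat 1)|}$ for $i=n-2$ and $H_i(\overline\CP;E)=0$ otherwise. Since $D$ vanishes on $\CA$ while $\overline\CP\setminus\CA$ is a right ideal of $\overline\CP$, the inclusion gives $H_{n-3}(\overline\CP;D)\cong H_{n-3}(\overline\CP\setminus\CA;D)=H_{n-3}(\CP\setminus\CA;F)$, exactly as in the proof of \cref{thm:exact_sequence_for_atoms}. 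Finally I would verify that $(\CP,\ll,D)$ has the co-stability property at $n-2$: for an unrefinable $\hat 1$-chain $c$ of length $n-2$, the set $C(c)$ consists of atoms, $c_0$ is not an atom (it has codegree $n-2$, while atoms of the pure poset $\CP$ have codegree $n-1$), and, because $D(\hat 0)=0$, $D$ kills $\CA$, and $D$ agrees with $F$ elsewhere, the natural map $\colim_{\langle C(c)\rangle}D\to D(c_0)$ is precisely the hypothesised monomorphism $\bigoplus_{a\in C(c)\setminus\CA}F(a)\to F(c_0)$. Hence \cref{dual/main_theorem} yields $H_{n-2}(\overline\CP;D)=0$.

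Feeding these facts into the relevant portion of the long exact sequence,
\[
H_{n-2}(\overline\CP;D)\to H_{n-2}(\overline\CP;F)\to H_{n-2}(\overline\CP;E)\to H_{n-3}(\overline\CP;D)\to H_{n-3}(\overline\CP;F)\to H_{n-3}(\overline\CP;E),
\]
the outer terms $H_{n-2}(\overline\CP;D)$ and $H_{n-3}(\overline\CP;E)$ vanish, and substituting the identifications above produces exactly the asserted four-term exact sequence, with central arrow the connecting homomorphism. I expect the only genuinely delicate point — the main obstacle — to be the identification of $\colim_{\langle C(c)\rangle}D\to D(c_0)$ with the map in the hypothesis: this needs that $\langle C(c)\rangle$ is a claw, namely its minimum $\hat 0$ together with pairwise incomparable atoms, so that the colimit is the direct sum of the $D(a)$ and the map out of it is the sum of the structure maps $F(a\le c_0)$, combined with the bookkeeping $D(\hat 0)=0$, $D(a)=0$ for $a\in C(c)\cap\CA$, and $D=F$ on $\{c_0\}\cup(C(c)\setminus\CA)$. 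The remaining ingredients — naturality of $F\to E$, the atomic computation, and the right-ideal localisation already used implicitly in \cref{thm:exact_sequence_for_atoms} — are routine.
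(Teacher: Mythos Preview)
Your proposal is correct and is precisely the dualisation the paper has in mind: the paper states this result without proof as the covariant counterpart of \cref{thm:exact_sequence_for_atoms}, and your argument mirrors that proof step by step, replacing the subfunctor $E\hookrightarrow F$ by the quotient $F\twoheadrightarrow E$, stability by co-stability, and \cref{main_theorem} by \cref{dual/main_theorem}. The only point worth recording is that you have made explicit the identification $\colim_{\langle C(c)\rangle}D\to D(c_0)\;\cong\;\bigoplus_{a\in C(c)\setminus\CA}F(a)\to F(c_0)$ via the claw structure of $\langle C(c)\rangle$ and the vanishing $D(\hat 0)=0$, which in the paper is left implicit through \cref{rmk:stability_property_cases}.
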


\begin{cor}\label{cor:exact_sequence_for_atoms_and_stable_functor_covariant_colimit}
Assume the hypotheses of Theorem \ref{thm:exact_sequence_for_atoms_covariant_colimit}, that $\d(\CP)\geq 4$, and that $F$ has the stability property at $\d(\CP)-3$. Then we have a short exact sequence
\[
0\to H_{\d(\CP)-2}(\CP\setminus \{\hat 1\};F)\to \bigoplus_{a\in \CA} F(a)^{|\mu(a,\hat 1)|}\to H_{\d(\CP)-3}(\CP\setminus \CA;F)\to 0.
\]
\end{cor}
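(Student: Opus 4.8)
The plan is to obtain the claimed short exact sequence by truncating the four-term exact sequence furnished by \cref{thm:exact_sequence_for_atoms_covariant_colimit}, in exact parallel with how \cref{cor:exact_sequence_for_atoms_and_stable_functor} is deduced from \cref{thm:exact_sequence_for_atoms} and \cref{main_theorem}. First I would observe that the hypotheses of \cref{thm:exact_sequence_for_atoms_covariant_colimit} are assumed verbatim here (pureness and finiteness of $\CP$, $\d(\CP)\geq 3$, $F(\hat 0)=0$, and the existence of a subset $\CA$ of atoms with the required monomorphisms), so that theorem supplies the exact sequence
\[
0\to H_{\d(\CP)-2}(\CP\setminus \{\hat 1\};F)\to \bigoplus_{a\in \CA} F(a)^{|\mu(a,\hat 1)|}\to H_{\d(\CP)-3}(\CP\setminus \CA;F)\to H_{\d(\CP)-3}(\CP\setminus\{\hat 1\};F)\to 0.
\]
The only remaining task is to kill the rightmost term. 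Since $\d(\CP)\geq 4$, the index $i=\d(\CP)-3$ lies in the admissible range $1\le i\le \d(\CP)-1$, so the co-stability property of $F$ at $\d(\CP)-3$ (the covariant analogue of \cref{def:stability}, see \cref{def:costability}) together with \cref{dual/main_theorem} yields $H_{\d(\CP)-3}(\CP\setminus\{\hat 1\};F)=0$; substituting this into the displayed sequence leaves precisely the asserted short exact sequence.

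I do not anticipate a genuine obstacle: the argument is a formal consequence of the two cited results, amounting to a one-step truncation of a known exact sequence. The only subtlety worth flagging is the role of the hypothesis $\d(\CP)\geq 4$: it is exactly what places $\d(\CP)-3$ in the range of degrees covered by \cref{dual/main_theorem}, whereas $\d(\CP)=3$ would amount to asserting the vanishing of $H_0$, which fails in general. For completeness one notes that the contravariant statement \cref{cor:exact_sequence_for_atoms_and_stable_functor} is established in the identical fashion, replacing \cref{thm:exact_sequence_for_atoms_covariant_colimit} and \cref{dual/main_theorem} by \cref{thm:exact_sequence_for_atoms} and \cref{main_theorem}.
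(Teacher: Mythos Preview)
Your proposal is correct and mirrors exactly the paper's approach: the paper states this corollary without proof as the dual of \cref{cor:exact_sequence_for_atoms_and_stable_functor}, whose proof reads ``This is immediate from \cref{thm:exact_sequence_for_atoms} and \cref{main_theorem}.'' Your argument is the dual of that one-line proof, invoking \cref{thm:exact_sequence_for_atoms_covariant_colimit} and \cref{dual/main_theorem} instead, and you correctly note that the hypothesis should be read as co-stability (\cref{def:costability}) since $F$ is covariant.
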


\begin{rmk}
Note that we can take $\CA$ equal to all the atoms of $\CP$ in Theorems \ref{thm:exact_sequence_for_atoms} and \ref{thm:exact_sequence_for_atoms_covariant_colimit} as well as in Corollaries \ref{cor:exact_sequence_for_atoms_and_stable_functor} and \ref{cor:exact_sequence_for_atoms_and_stable_functor_covariant_colimit}.
\end{rmk}


\section{Applications}

\subsection{Mackey functors} In this subsection, we show how the vanishing result for Mackey functors over a poset $\CP$ in \cite[Theorem C*]{CD23} can be improved when $\CP$ is dual CL-shellable. For the shake of completeness, we include below the necessary definitions.

\begin{defn}
\label{def:G-linearCC}
Let $G\colon \CP^\op \to R\Mod$ be a functor from a poset $\CP$ and $i\in \CP$. We say that $\alpha\in \End_R(G(i))$  ($\gamma\in \Aut_R(G(i))$) is \emph{$G$-linear} if for all $j<i$
\[
G(j<i)\circ\alpha = \beta \circ G(j<i)\]
for some $\beta\in \End_R(G(j))$ ($\beta\in \Aut_R(G(j))$). We denote by $\End^G_R(i)$ ($\Aut^G_R(i)$) the submonoid (subgroup) of $G$-linear endomorphisms (automorphisms) of $G(i)$.
\end{defn}

\begin{defn}\label{def:weakMackeyfunctorAb_Pop} 
Let $\CP$ be a filtered poset and let $G\colon \CP^\op\to R\Mod$ a contravariant functor. We say that $G$ is a \emph{weak Mackey functor} if for all $j<i$  there exists a morphism in $R\Mod$, $F(j<i)\colon G(j)\to G(i)$, such that $G(j<i)\circ F(j<i)=\alpha(i,j)$ with $\alpha(i,j)\in \End^G_{R}(j)$, and, for $k<i$, $j\not< k$, 
\[
\ker_G(k)\subseteq \ker(G(j<i)\circ F(k<i)),
\]
where $\ker_G(k)=\bigcap_{l<k}\ker(G(l<k))$. Moreover, if $\CQ$ is a subposet of $\CP$, we say that $G$ has a \emph{quasi-unit in  $\CQ$} if $\alpha(i,j)\in \Aut^G_{R}(j)$ for all $j<i$, with  $i,j\in \CQ$. If $\CQ=\CP$, we just say that $G$ has a \emph{quasi-unit}.
\end{defn}

While in \cite[Theorem C*]{CD23} it is proven that a weak Mackey functor $G\colon \CP^\op\to R\Mod$  with a quasi-unit is acyclic, i.e., $
H^{i}(\CP\setminus \{\hat 1\};F)=0$ for all $i\geq 1$, here we obtain the following refinement.

\begin{thm}
\label{thm:Mackey}
Let $\CP$ be a dual CL-shellable poset with recursive coatom ordering $\ll$, $1\le i\le \d(\CP)-1$, and $G\colon \CP^\op\to R\Mod$ a weak Mackey functor such that for every unrefinable $\hat 1$-chain $c$ of length $i$, $G$ has a quasi-unit in $\{c_0\}\cup \langle C(c)\rangle$. Then
\[
H^i(\CP\setminus \{\hat 1\};G)=0.
\]
\end{thm}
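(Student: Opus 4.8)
The goal is to reduce the Mackey-functor hypothesis to the stability property of \cref{def:stability} and then invoke \cref{main_theorem}. So the plan is to show that, under the stated local quasi-unit assumption, the functor $G$ is stable at degree $i$, i.e. that for every unrefinable $\hat 1$-chain $c$ of length $i$ the natural map $G(c_0)\to \lim_{\langle C(c)\rangle} G$ is surjective. Fix such a chain $c$ and write $\CR=\langle C(c)\rangle\cup\{c_0\}$; by \cref{lem:Q_CL-shellable} (applied with $p=c_0$ and $Q$ the initial segment $C(c)$, which is an initial segment by \textbf{CL1}) this is a dual CL-shellable poset of degree $\d(c_0)$, and $G\vert_{\CR}$ is again a weak Mackey functor. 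By hypothesis $G$ — hence $G\vert_{\CR}$ — has a quasi-unit on all of $\CR$, so it suffices to prove: a weak Mackey functor $H\colon \CR^\op\to R\Mod$ with a (global) quasi-unit over a bounded filtered poset $\CR$ with maximum $c_0$ has the property that $H(c_0)\to \lim_{\CR_{<c_0}} H$ is surjective.

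\textbf{Key steps.} First I would recall from \cite{CD23} (the proof of Theorem C$^*$) that a weak Mackey functor with a quasi-unit is $\lim$-acyclic; the surjectivity of $H(c_0)\to\lim_{\CR\setminus\{c_0\}} H$ is exactly the statement that $H$ is a fibrant object at the top vertex $c_0$, which is the degree-zero instance of that acyclicity and is established in \cite{CD23} by an explicit averaging/transfer argument. Concretely, given $(x_q)_{q<c_0}\in\lim_{\CR_{<c_0}}H\le\prod_{q<c_0}H(q)$, one uses the transfer maps $F(q<c_0)\colon H(q)\to H(c_0)$ together with the fact that $H(q<c_0)\circ F(q<c_0)=\alpha(c_0,q)$ is a $G$-linear automorphism to build, via a combinatorial inclusion–exclusion over the finitely many elements below $c_0$ relevant to the compatibility conditions, an element $x\in H(c_0)$ restricting to $(x_q)_q$; the second clause of \cref{def:weakMackeyfunctorAb_Pop}, involving $\ker_G$, is what makes the cross-terms vanish. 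Having produced this surjection for $\CR$, I translate back: $\lim_{\langle C(c)\rangle} G = \lim_{\CR_{<c_0}} G\vert_{\CR}$, and surjectivity of $G(c_0)\to\lim_{\langle C(c)\rangle}G$ is precisely stability of $(\CP,\ll,G)$ at $i$. Then \cref{main_theorem} yields $H^i(\CP\setminus\{\hat 1\};G)=0$.

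\textbf{Main obstacle.} The crux is the second step: verifying that the transfer-based section argument of \cite[Theorem C$^*$]{CD23} applies verbatim to $\CR_{<c_0}$. Two points need care. One is that $\CR$ need not be finite, so the inclusion–exclusion must be organised as in \cref{seccion_matching}, restricting attention to the finitely many coatoms of $\CR$ below which a given element of codegree $2$ lies (using \textbf{CL0} and \cref{lema:Q_final_category}, so that the codegree-$\le 2$ part is final and the compatibility of a candidate limit element is checked on finitely many data at a time). The other is bookkeeping: one must check that the quasi-unit hypothesis we assumed — on $\{c_0\}\cup\langle C(c)\rangle$ for the chains of length exactly $i$ — does hand us a quasi-unit on \emph{all} of $\CR=\langle C(c)\rangle\cup\{c_0\}$, which it does since every pair $j<k$ in $\CR$ lies in $\{c_0\}\cup\langle C(c)\rangle$. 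Once these are in place, no further input beyond \cref{main_theorem}, \cref{lem:Q_CL-shellable}, and the transfer computation of \cite{CD23} is needed.
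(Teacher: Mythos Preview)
Your proposal is correct and follows essentially the same route as the paper: reduce to the stability property at $i$ by restricting to $\CR=\{c_0\}\cup\langle C(c)\rangle$, observe that $G\vert_{\CR}$ is a weak Mackey functor with a global quasi-unit, invoke \cite[Theorems C$^*$ and B$^*$]{CD23} to get the surjectivity of $G(c_0)\to\lim_{\langle C(c)\rangle}G$, and then apply \cref{main_theorem}. The paper's proof is somewhat leaner: it does not need $\CR$ to be dual CL-shellable (so your appeal to \cref{lem:Q_CL-shellable} is superfluous), and it simply cites \cite{CD23} for the surjectivity rather than re-deriving the transfer/section argument or worrying about finiteness, since those issues are already absorbed into the cited results.
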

\begin{proof}
By Theorem \ref{main_theorem}, it is enough to check that $(\CP,\ll,G)$ has the stability property at $i$. If $c$ is as in the statement then, by hypothesis, the restriction of $G$ to $\CR=\{c_0\}\cup\langle C(c)\rangle$ is a weak Mackey functor with a quasi-unit. Thus, by \cite[Theorem C*, Theorem B*]{CD23}, this restriction satisfies \cite[Equation (12)]{CD23} for $i=c_0$, i.e.,
\[
G(c_0)\to \lim_{\CR_{<c_0}} G = \lim_{\langle C(c) \rangle} G
\]
is surjective. We are done.
\end{proof}

\subsection{Hyperplane arrangement.}

Let $V$ be a finite-dimensional vector space over a field $\KK$, and let $H$ be a numerable set of linear hyperplanes in $V$. The \emph{arrangement lattice} $\CL(H)$ consists of all possible intersections of hyperplanes in $H$ ordered by the inclusion relation. Then $\CL(H)$ is a pure bounded lattice by considering
\[
\hat 0 = \bigcap_{h\in H}h,\qquad \hat 1 = V,\qquad \d(x)=\dim(x)-\dim(\hat0),
\]
\[
\quad x\meet y=x\cap y,\quad x\join y=\bigcap\{z\in \CL(H)\mid x\cup y\subset z\}.
\]  
It is straightforward that, for $x\le y$ in $\CL(H)$, $x\prec y$ if and only if $\d(y)=\d(x)+1$. In the case of $H$ being finite, $\CL(H)$ is a geometric lattice \cite[Subsection 1.2]{Everitt2022}, and this implies that it is dual CL-shellable \cite{Bjoerner1980,Bjoerner1983}. Here, we prove that $\CL(H)$ is dual CL-shellable even if $H$ is not finite by providing a recursive coatom ordering, see \cref{lem:recursive_coatom_ordering_L}. For $x\in \CL(H)$ and a subset $I\subseteq \CL(H)_{\prec x}$, via annihilators we may think of the elements of $I$ as $1$-dimensional subspaces of the dual vector space $x^*$, and  we denote as $\langle I\rangle_\KK$ the vector subspace they span in $x^*$. Moreover, we have the following properties,
\begin{enumerate}
\item[(a)] $I$ is a basis of $\langle \CL(H)_{\prec x}\rangle_\KK$ if and only if it is minimal subject to $\bigcap_{h\in I} h=\hat 0$, 
\item[(b)] $\dim(\langle \CL(H)_{\prec x}\rangle_\KK)=\d(x)$, and
\item[(c)] $\langle I\rangle_{\CL(H)}= \langle I\rangle_{\CL(H)_{\le x}}$.
\end{enumerate}

\begin{lem}
\label{lem:recursive_coatom_ordering_L}
Let $V$ be a finite-dimensional vector space and $\CH$ be a numerable set of hyperplanes of $V$. Then there exists a recursive coatom ordering $\ll$ for $\CL(H)$ satisfying the additional condition
\begin{description}
\item[CL3] For every unrefinable $\hat 1$-chain $c$, there exists an initial segment of $\ll_c$ that is a basis of $\langle \CL(H)_{\prec x}\rangle_\KK$.

\end{description}
\end{lem}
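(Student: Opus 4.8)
The plan is to build the recursive coatom ordering for $\CL(H)$ by a simultaneous recursion on the degree of the poset, constructing for each unrefinable $\hat 1$-chain $c$ a well-order $\ll_c$ of $\CL(H)_{\prec c_0}$ (the hyperplanes of the dual space $c_0^*$, thought of via annihilators) so that all of \textbf{CL0}--\textbf{CL3} hold. Since $\CL(H)_{\prec c_0}$ is a numerable set, fix once and for all an enumeration of it, hence an identification with an initial segment of $\NN$; every well-order we produce will be of order type at most $\omega$, so \textbf{CL0} is automatic. The key is to choose the \emph{start} of each $\ll_c$ to be a basis of $\langle \CL(H)_{\prec c_0}\rangle_\KK = c_0^*$, which handles \textbf{CL3}, and then to arrange that this basis contains (indeed begins with) the set $C(c)$, which is what \textbf{CL1} demands.

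First I would treat the base case $\d(c_0)\le 1$, where $\CL(H)_{\prec c_0}$ has at most one element and everything is trivial. For the inductive step, given $c = c_0 \prec c_1 \prec \dots \prec \hat 1$ with $c' = c_1 \prec \dots \prec \hat 1$ already equipped with $\ll_{c'}$, I first describe $C(c) = \{x \prec c_0 \mid x < h \prec c_1 \text{ for some } h \ll_{c'} c_0\}$. The crucial linear-algebra input is that $C(c)$, viewed inside $c_0^*$, is linearly independent: this should follow from property (a) together with the fact that the $h \ll_{c'} c_0$ appearing are exactly an initial segment of $\ll_{c'}$, which by the inductive \textbf{CL3} and \textbf{CL1} is contained in a basis of $c_1^*$; intersecting down to $c_0$ keeps independence. (This is essentially the non-purity-free analogue of the classical fact that in a geometric lattice the "old" atoms below a new coatom span a proper subspace.) Then I extend $C(c)$ to a basis $B_c$ of $c_0^*$ using elements of $\CL(H)_{\prec c_0}$ in the order of the fixed global enumeration, list $B_c$ first (with $C(c)$ as its initial part), and then list the remaining elements of $\CL(H)_{\prec c_0}$ in the fixed enumeration order. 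This defines $\ll_c$; \textbf{CL1} holds because $C(c)$ is by construction an initial segment, and \textbf{CL3} holds because $B_c$ is.

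The remaining axiom is \textbf{CL2}: for $h' \ll_c h$ in $\CL(H)_{\prec c_0}$ and $p \in \CL(H)_{<h}\cap \CL(H)_{<h'}$, I must produce $h'' \ll_c h$ and $q \prec h$ with $p \le q < h''$. Translating to subspaces, $h, h'$ are hyperplanes of $c_0^*$ (well, of $V$, cutting $c_0$ down), $p$ is a subspace contained in both, so $p \subseteq h \cap h'$ which has codimension $2$ in $c_0$; pick $q$ to be a coatom of $h$ (i.e. codimension $1$ in $h$, codimension $2$ in $c_0$) with $p \le q \subseteq h\cap h'$ — such a $q$ exists in $\CL(H)$ since $h \cap h' \in \CL(H)$ and $\CL(H)_{\le h}$ is again a purely-ranked arrangement lattice so ranks are realized — and then take $h''$ to be any element of $B_c$ (the chosen basis of $c_0^*$, an initial segment of $\ll_c$, hence all $\ll_c$-below anything outside it, and in particular we can guarantee $h'' \ll_c h$ as soon as $h$ is not itself among the very first basis elements; the edge case where $h$ is small is handled because then $h', p$ are even more constrained) with $q < h''$, which exists because $q$ has codimension $2 < \d(c_0)$ in $c_0$ so $q^\perp$ is a proper subspace of $c_0^*$ not containing the whole basis $B_c$. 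I expect \textbf{CL2} to be the main obstacle: one has to be careful that $h''$ can be chosen $\ll_c$-strictly below $h$ rather than merely below some element, and the cleanest fix is to verify that $q$ lies below at least two distinct basis elements of $B_c$ (since $q^\perp \subsetneq c_0^*$ has codimension $\ge 1$, it misses at most... actually it may miss many basis vectors, but it is contained in a hyperplane, and a hyperplane of $c_0^*$ cannot contain a spanning set, so at least one basis element avoids $q^\perp$, i.e. lies above $q$ — and by a dimension count $q^\perp$, having codimension $2$, lies below at least two basis elements), so at least one of them is $\ll_c$-below $h$; this bookkeeping, rather than any deep idea, is where the care is needed.
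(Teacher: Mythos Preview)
Your overall strategy matches the paper's --- build $\ll_c$ by induction on the length of the chain, placing a basis first --- but there is a genuine gap in your inductive step, and your treatment of \textbf{CL2} is far more complicated than it needs to be.

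On \textbf{CL2}: in an arrangement lattice this axiom is automatic, independent of the ordering. Given $h'\ll_c h$ and $p\le h\cap h'$, just take $q=h\cap h'$ and $h''=h'$. Then $q\prec h$, $p\le q$, $q<h''$, and $h''\ll_c h$ by hypothesis. The paper dispatches \textbf{CL2} in one line this way; your argument about choosing $h''$ from $B_c$ and counting how many basis elements lie above $q$ is unnecessary.

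On \textbf{CL1}/\textbf{CL3}: your ``crucial linear-algebra input'' that $C(c)$ is linearly independent in $c_0^*$ is false in general. You argue that the predecessors $\{h:h\ll_{c'}c_0\}$ are ``contained in a basis of $c_1^*$'', citing inductive \textbf{CL3} and \textbf{CL1}; but \textbf{CL3} only says \emph{some} initial segment of $\ll_{c'}$ is a basis $B'$, and the initial segment cut off at $c_0$ can be strictly larger than $B'$ whenever $c_0\notin B'$. Concretely, take $V=\KK^3$ and four hyperplanes $h_1,\dots,h_4$ in general position, ordered $h_1\ll h_2\ll h_3\ll h_4$ with basis $\{h_1,h_2,h_3\}$. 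For $c=h_4\prec V$ one gets $C(c)=\{h_1\cap h_4,h_2\cap h_4,h_3\cap h_4\}$, three lines in the $2$-plane $h_4$, hence three vectors in the $2$-dimensional space $h_4^*$ --- not independent.

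The paper repairs this with a case split on whether $c_0\in B'$. If $c_0\in B'$, then every $h\ll_{c'}c_0$ lies in $B'$, so $C(c)=\{b'\cap c_0:b'\in B',\,b'\ll_{c'}c_0\}$ is independent (since $\{b':b'\ll_{c'}c_0\}\cup\{c_0\}\subseteq B'$ is independent in $c_1^*$), and you extend $C(c)$ to a basis $B$ as you proposed. If $c_0\notin B'$, then all of $B'$ precedes $c_0$, and $\hat B=\{b'\cap c_0:b'\in B'\}\subseteq C(c)$ already \emph{contains} a basis $B$ of $c_0^*$ by property~(a); now you order $\ll_c$ so that $B$ comes first, then the rest of $C(c)$, then everything else. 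In both cases $C(c)$ and $B$ are nested initial segments, giving \textbf{CL1} and \textbf{CL3}.
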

\begin{proof}
Note that \textbf{CL2} holds by choosing $h''=h$ and $q=h\cap h'$ in that definition. Next, we define by induction on the length of the $\hat1$-chain $c$ a linear order $\ll_c$ that verifies \textbf{CL0,1,3}. For length $1$, i.e., for $c=(\hat1)$, \textbf{CL1} is trivial since $C(c)=\varnothing$. Let $B$ be a basis of $\langle H\rangle_\KK$ contained in $H$. Since $B$ is finite and $H$ numerable, we define $\ll_c$ to be any well order for $H$ verifying \textbf{CL0} and the following property:
\begin{equation}\label{equ:order_for_CL3}
x\in B\mbox{ and } y\in H\setminus B\mbox{, then } x\ll_c y.
\end{equation}
Now, assume that $\ll_c$ is defined for every unrefinable chain of length less than $i$, that it satisfies \textbf{CL0,1,3}, and let $c$ be an unrefinable chain of length $i$. Let $c'=c_1\prec c_2\prec \dots \prec \hat 1$, $B'$ be the initial segment given by \textbf{CL3} applied to $c'$, and consider the subset
\[
\hat B= \{b'\cap c_0\mid b'\in B', b'\ll_{c'} c_0\}.
\]
Then we have $\hat B\subseteq C(c)$ by the definition of $C(c)$. Moreover, if $c_0\notin B'$, then $\hat B$ contains a basis $B$ of $\langle \CL(H)_{\prec c_0}\rangle_\KK$ by property (a) above. Otherwise, $c_0\in B'$, $\hat B=C(c)$, and this latter set is linearly independent as $B'$ is so. Hence, we may extend $C(c)$ to a basis $B\subseteq \CL(H)_{\prec c_0}$. In any case, we can proceed in a similar way to \eqref{equ:order_for_CL3} and construct a well order $\ll_c$ of $\CL(H)_{\prec c_0}$ that satisfies \textbf{CL0} and such that both $B$ and $C(c)$ are initial segments.
\end{proof}

For any $j\geq 0$, consider the covariant functor given by $j$-th exterior power,
\[
\xymatrix@R=8pt{
\Lambda^j\colon \CL(H)\ar[r] & \KK\Mod\\
W\ar@{|->}[r]& \Lambda^j(W)\\
W'\leq W\ar@{|->}[r]& \Lambda^j(W')\leq \Lambda^j(W),
}
\]
where $\Lambda^j(W)$ is the $j$-th exterior power on the vector space $W$. Now, employing the co-stability property of \cref{sec:vanishing} and the non-vanishing results of \cref{sec:non-vanishing_stable_functor}, we produce new proofs for Theorem~9 and part of Theorem~11 in \cite{Everitt2019}. As remarked in the introduction, our results apply to arrangements with numerable number of hyperplanes.

\begin{thm}
\label{thm:hyperplane arrangement}
Let $V$ be a finite-dimensional vector space, $H$ be a numerable set of hyperplanes of $V$, and $\CL(H)$ the arrangement lattice of $H$. Then $d_{i,j}=\dim H_i(\CL(H)\setminus\{V\};\Lambda^j(-))$ is zero outside of the set $A\cup B$, where $A$ is the segment
\[
A=\{(0,j)|0\leq j\leq \dim(V)-1\}
\]
and $B$ is the truncated band
\[
B=\{(i,j)\text{ such that } 1\leq i\leq \d(\CL(H))-2 \text{ and }\d(\CL(H))-1\leq i+j\leq \dim(V)-1\}.
\]
Moreover, $d_{0,j}=\begin{pmatrix} \dim(V)\\j \end{pmatrix}$ if $0\leq j\leq \d(\CL(H))-2$, $d_{0,\dim(V)-1}=\#H$, and $d_{i,j}$ equals
\[
(-1)^{i+1}\begin{pmatrix} \dim(V)\\j \end{pmatrix} + \sum_{d=\d(\CL(H))-1-i}^{\d(\CL(H))-1} (-1)^{d-\d(\CL(H))+1+i} \begin{pmatrix} d\\j \end{pmatrix} \sum_{\substack{x\in \CL(H)\\\d(x)=d}} |\mu(x,V)|
\]
for $H$ finite, $\dim(\hat 0)=0$, $1\leq i\leq \d(\CL(H))-2$ and $i+j=\d(\CL(H))-1$.
\end{thm}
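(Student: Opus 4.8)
The strategy is to reduce the homology of $\Lambda^j(-)$ over $\CL(H)\setminus\{V\}$ to the co-stability machinery of \cref{sec:vanishing} together with the top-homology description of \cref{sec:non-vanishing_stable_functor}. First I would record that, by \cref{lem:recursive_coatom_ordering_L}, $\CL(H)$ carries a recursive coatom ordering $\ll$ satisfying \textbf{CL3}, and that $\Lambda^j(\hat 0)=0$ as soon as $j\ge 1$ (since $\d(\hat 0)=0$, i.e. $\hat 0$ is the zero subspace), so that for $j\ge 1$ we may work over $\overline{\CL(H)}$ via the dual of \cref{fib-rep-F(0)=0}. For $j=0$ the functor is the constant functor $\underline{\KK}$, whose homology is that of the order complex of $\overline{\CL(H)}^{\op}$, a wedge of spheres concentrated in degree $\d(\CL(H))-2$ by \cite{BWnonpure} (this handles the column $i=0$ via $d_{0,0}=1$ and the single top value, together with $d_{i,0}$ on the band).

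The heart is the co-stability computation: I claim $(\CL(H),\ll,\Lambda^j)$ has the co-stability property at every degree $i$ with $1\le i\le \d(\CL(H))-1$ except possibly at $i=\d(\CL(H))-1-j$. Concretely, for an unrefinable $\hat 1$-chain $c$ of length $i$, one must show the natural map $\colim_{\langle C(c)\rangle}\Lambda^j\to \Lambda^j(c_0)$ is a monomorphism. Using \textbf{CL3}, the set $C(c)$ contains (or equals) a linearly independent family $B\subseteq \CL(H)_{\prec c_0}$ which, thought of in $c_0^*$ via annihilators, has size $\d(c_0)$ if $c_0\notin B'$ and size $|C(c)|$ otherwise; the colimit over $\langle C(c)\rangle$ of $\Lambda^j$ computes $\Lambda^j$ of the subspace of $c_0$ cut out by the corresponding coordinate hyperplanes — a routine identification of $\colim$ of exterior powers along a diagram of coordinate subspaces with the exterior power of their union-span. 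Injectivity into $\Lambda^j(c_0)$ then fails exactly when that spanned subspace has dimension $<\d(c_0)$ \emph{and} $j$ is large enough to see the missing direction, i.e. precisely when $\cd(c_0)=i$ forces $\d(c_0)=\d(\CL(H))-i$ and $j>\d(c_0)$ is impossible but $j=\d(c_0)$ together with $C(c)$ spanning a hyperplane in $c_0^*$ obstructs. Carrying out this linear algebra carefully pins the unique bad degree to $i+j=\d(\CL(H))-1$, which is the equation defining the nontrivial locus. Then \cref{dual/main_theorem} gives $d_{i,j}=0$ for $1\le i\le \d(\CL(H))-2$ outside $i+j=\d(\CL(H))-1$, and the top degree $i=\d(\CL(H))-1$ contributes nothing because the relevant map lands in $\Lambda^j(\hat 0)=0$; combined with the vanishing of $\Lambda^j$ for $j>\dim(V)$ and the obvious $H_i=0$ for $i\ge \d(\CL(H))-1$ on the whole poset, this yields the stated support $A\cup B$.

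For the explicit Betti numbers on the diagonal $i+j=\d(\CL(H))-1$ (and for $d_{0,\dim(V)-1}=\#H$, $d_{0,j}=\binom{\dim V}{j}$), I would invoke \cref{thm:exact_sequence_for_atoms_covariant_colimit} with $\CA$ taken to be \emph{all} atoms of $\CL(H)$: the hypothesis there is vacuous once $C(c)\setminus\CA=\emptyset$, which holds because for a chain $c$ of length $\d(\CL(H))-2$ the set $C(c)$ consists of atoms (as in \cref{rmk:stability_property_cases}), and the co-stability already established at degrees $\le \d(\CL(H))-3$ (valid there since those degrees avoid $i+j=\d(\CL(H))-1$ for the $j$ in question, or else handled inductively) lets me iterate \cref{cor:exact_sequence_for_atoms_and_stable_functor_covariant_colimit} down the band. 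Each short exact sequence reads $0\to H_{d-1}(\CL(H)\setminus\{\hat 1\};\Lambda^j)\to \bigoplus_{a}\Lambda^j(a)^{|\mu(a,\hat 1)|}\to H_{d-2}(\CL(H)\setminus\CA;\Lambda^j)\to 0$ with $d=\d(\CL(H))$, and $\CL(H)\setminus\CA$ deformation retracts (since removing atoms from a geometric lattice leaves $\langle$coatoms$\rangle$-type pieces) onto subposets of strictly smaller degree, so an Euler-characteristic/inductive count over $d$ starting from $\dim\Lambda^j(a)=\binom{\d(a)}{j}$ and $\dim\Lambda^j(V)=\binom{\dim V}{j}$, with the $\mu(x,V)$ terms appearing exactly as the $|\mu(a,\hat 1)|$ multiplicities promoted through each extension, reproduces the alternating-sum formula; the special values $d_{0,j}=\binom{\dim V}{j}$ for $j\le \d(\CL(H))-2$ and $d_{0,\dim V-1}=\#H$ fall out from the bottom of this induction (the coatom count $\#H$ being $\#\{$hyperplanes$\}$). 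The main obstacle I anticipate is the colimit-of-exterior-powers identification in the second paragraph: one must show $\colim_{\langle C(c)\rangle}\Lambda^j$ is computed by the exterior power of the $\KK$-span of $C(c)$ inside $c_0$ (equivalently that higher colimits of $\Lambda^j$ restricted to $\langle C(c)\rangle$ vanish in the relevant range), and that the induced map to $\Lambda^j(c_0)$ is the evident inclusion — this is where properties (a)--(c) of the annihilator correspondence and a small inductive/exact-sequence argument on $\Lambda^j$ (using $\Lambda^j(W\oplus\KK)\cong\Lambda^j(W)\oplus\Lambda^{j-1}(W)$) have to be combined with the co-stability recursion, and getting the book-keeping of dimensions to single out the one bad degree $i+j=\d(\CL(H))-1$ cleanly is the delicate point.
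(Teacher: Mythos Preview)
Your overall strategy---prove co-stability of $\Lambda^j$ at all degrees below the diagonal $i+j=\d(\CL(H))-1$ via \cref{dual/main_theorem}, then handle the diagonal via \cref{thm:exact_sequence_for_atoms_covariant_colimit} and induction on truncations---is exactly the paper's approach. The gap is in your justification of co-stability. Your claim that $\colim_{\langle C(c)\rangle}\Lambda^j$ is ``$\Lambda^j$ of the union-span'' of the hyperplanes in $C(c)$, and that injectivity into $\Lambda^j(c_0)$ fails precisely when that span has dimension $<\d(c_0)$, is wrong on both counts. Once $C(c)$ contains the basis $B$ of $\langle\CL(H)_{\prec c_0}\rangle_\KK$ supplied by \textbf{CL3}, the span is all of $c_0^*$; yet the colimit can still be strictly larger than $\Lambda^j(c_0)$ when $C(c)$ has redundant elements and $j$ is large. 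The paper separates two cases. If $C(c)\subseteq B$, a direct coordinate computation in a dual basis gives injectivity. If $C(c)\supsetneq B$, one must show that a class supported on a redundant hyperplane $h_l\in C(c)\setminus B$ can be rewritten over the basis hyperplanes; this uses that $\{h_l\cap h_k\}_{k\le m-1}$ is a basis of $\langle\CL(H)_{\prec h_l}\rangle_\KK$ together with the surjectivity part of the coordinate computation, and the latter requires $j\le \d(c_0)-2$, i.e.\ $i+j\le\d(\CL(H))-2$. This inequality, not any dimension drop of a span, is what singles out the diagonal. The decomposition $\Lambda^j(W\oplus\KK)\cong\Lambda^j(W)\oplus\Lambda^{j-1}(W)$ you propose does not by itself organise the redundant-hyperplane cancellation.

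Two smaller points. First, $\hat 0=\bigcap_{h\in H}h$ need not be the zero subspace, so $\Lambda^j(\hat 0)$ need not vanish for $j\ge 1$; the paper's vanishing argument does not use $\Lambda^j(\hat 0)=0$, and the hypothesis $\dim(\hat 0)=0$ enters only for the explicit diagonal formula. Second, in the diagonal computation the paper does not argue that $\CL(H)\setminus\CA$ ``deformation retracts'' anywhere; it passes to the truncation posets $\CL_{\ge\d(\CL)-i'}$, observes that the coordinate argument above remains valid for these (they are posets, not lattices, but the needed intersections still lie in them), and then iterates \cref{cor:exact_sequence_for_atoms_and_stable_functor_covariant_colimit} to obtain the alternating sum.
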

\begin{rmk}
For $H$ finite and $\hat 0=0$, the formula for $d_{0,\dim(V)-1}$ simplifies the formula in the second case in \cite[Theorem 9]{Everitt2019}. The formulas for $d_{0,j}$ and $d_{i,j}$ are alternative explicit formulations of the first and third cases in the same theorem.
\end{rmk}

\begin{proof}
Let $\CL=\CL(H)$. Note that if $i>\d(\CL)-2$, we have  $d_{i,j}=0$ by dimensional reasons and thus we assume in the rest of the proof that $0\leq i\leq  \d(\CL)-2$. In addition, \begin{equation}\label{equ:Lambdaj(x)=0_dim(x)<j}
\Lambda^j(x)=0\text{ if }\dim(x)<j,    
\end{equation}  
so that $d_{i,j}=0$ if $i+j\geq \dim(V)$ by dimensional reasons again. 

Next, we prove that, for $1\leq i\leq \d(\CL)-2$ and $i+j\leq \d(\CL)-2$, we also have that $d_{i,j}=0$. For this is enough, by Theorem \ref{dual/main_theorem}, to show that $(\CL,\ll,\Lambda^j)$ has the co-stability property at $i+1$, where $\ll$ is a recursive coatom ordering as in Lemma \ref{lem:recursive_coatom_ordering_L}. Thus, for any unrefinable $\hat1$-chain $c$ of length $i+1$ in $\CL$, we need to show that 
\begin{equation}\label{equ:colimLambdal_main}
\colim_{\langle C(c) \rangle} \Lambda^j \to \Lambda^j(c_0)\text{ is injective.}
\end{equation}
Note that $dim(c_0)=dim(V)-i$. We proceed in several steps.

\textbf{Step 1: } For a vector space $W$ with basis $\{w_1,\ldots,w_n\}$, $m\leq n$, and a lattice $\CL$ satisfying
\[
\{W_1,\ldots,W_m\}\subseteq \CL\subseteq \CL(\{\text{hyperplanes of }W\}),
\]
where $W_k$ is the hyperplane generated by the vectors $w_\cdot$'s but $w_k$, the morphism
\begin{equation}\label{equ:colimLambdal_general}
\colim_{\langle W_1,\ldots,W_m\rangle_\CL } \Lambda^j \to \Lambda^j(W)
\end{equation}
is injective and, if $j\leq m-1$, it is an isomorphism. Recall that $\Lambda^j(W)$ has basis the wedges 
\[
\{w_{k_1}\wedge \ldots \wedge w_{k_j}\text{ with }1\leq k_1<\cdots<k_j\leq n\},
\]
and the domain of \eqref{equ:colimLambdal_general} is a quotient of the $\KK$-vector space generated by the wedges
\[
\bigcup_{k=1}^m \{w_{k_1}\wedge \ldots \wedge w_{k_j}\text{ with }1\leq k_1<\cdots<k_j\leq n\text{, }k_* \neq k\}.
\]
If $j\geq n$, the claim about injectivity is clear as the colimit vanishes. If $j=n-1$, there are no relations in the colimit and injectivity is also clear. For $j\leq n-2$, if a wedge $w_{k_1}\wedge \ldots \wedge w_{k_l}$ appears as $w\in \Lambda^l(W_k)$ and $w'\in \Lambda^l(W_{k'})$ with $1\leq k,k'\leq m$, i.e., it satisfies that $k_* \neq k$ and $k_* \neq k'$, then it also belongs to $\Lambda^l(W_k\cap W_{k'})\neq 0$, where $W_k\cap W_{k'}\in \langle W_1,\ldots,W_m\rangle_\CL$. Thus, we have $w=w'$ in the colimit and injectivity follows. If $j\leq m-1$, the two sets of wedges above are identical and surjectivity follows.

\textbf{Step 2: } Equation \eqref{equ:colimLambdal_main} holds if $C(c)$ is contained in an initial segment of $\ll_{c}$ that is a basis $B$ of the vector space $\langle \CL_{\prec c_0}\rangle_\KK\leq c_0^*$. Write $C(c)=\{h_1,\ldots,h_m\}$, $B=\{h_1,\ldots,h_r\}$, and 
\[
B_0=\{h_1,\ldots,h_r,h'_{r+1},\ldots,h'_n\},
\]
where $m\leq r\leq n$ and the latter set is a basis of $c_0^*$. For the dual basis $B_0^*=\{w_1,\ldots,w_n\}$ of $c_0$, we have that $h_k$ is generated by all the $w_\cdot$'s but $w_k$ and hence, by the injectivity part of Step~1 applied with $\CL=\CL(H)_{\le c_0}$ and property (c) above, Equation  \eqref{equ:colimLambdal_main} follows.

\textbf{Step 3: } Equation \eqref{equ:colimLambdal_main} holds if $C(c)$ contains an initial segment that is a basis $B$ of the vector space $\langle \CL_{\prec c_0}\rangle_\KK$. Write $B=\{h_1,\ldots,h_m\}$, $C(c)=\{h_1,\ldots,h_r\}$, and 
\[
B_0=\{h_1,\ldots,h_m,h'_{m+1},\ldots,h'_n\},
\]
where $m\leq r\leq n$ and the latter set is a basis of $c_0^*$. Next, consider the following composition,
\[
\colim_{\langle h_1,\ldots,h_m\rangle_\CL } \Lambda^j\to \colim_{\langle h_1,\ldots,h_r\rangle_\CL } \Lambda^j=\colim_{\langle C(c)\rangle_\CL} \Lambda^j\to \Lambda^j(c_0),
\]
where $\CL=\CL(H)_{\le c_0}$. Again by the injectivity part of Step 1, this composition is injective. Thus,  it suffices to show that the left hand-side morphism is surjective. To that aim, we consider a class 
\[
[\oplus_{l=1}^r x_l]\in\colim_{\langle C(c)\rangle_\CL} \Lambda^j,
\]
where $x_l\in \Lambda^j(h_l)$, and show that we may take $x_l=0$ if $m<l\leq r$. In fact, in that situation, 
\[
\bigcap_{k=1,\ldots,m} h_l\cap h_k=h_l\cap \bigcap_{k=1,\ldots,m} h_k=h_l\cap \hat0=\hat0,
\]
and thus, by (a), we may assume that the hyperplanes  $\{h_l\cap h_k\}_{k=1,\ldots,m-1}$ form a basis of the vector space $\langle \CL(H)_{\prec h_l}\rangle_\KK$ whose dimension is, by (b), equal to $\d(h_l)=\d(c_0)-1=m-1$. Thus, if $j\leq m-2$, by the isomorphism part of Step 1, we have a surjection,
\[
\bigoplus_{k=1\ldots m-1} \Lambda^j(h_l\cap h_k) \to \Lambda^j(h_l).
\]
In particular, $x_l=\sum_{k=1,\ldots,m-1} x_{l,k}$ with $x_{l,k}\in \Lambda^j(h_l\cap h_k)$ and 
\[
[x_l]=[\sum_{k=1,\ldots,m-1} x_{l,k}]=[\sum_{k=1,\ldots,m-1} x'_{l,k}],
\]
where $x'_{l,k}$ is the image of $x_{l,k}$ by $\Lambda^j(h_l\cap h_k)\to \Lambda^j(h_k)$ for $k=1,\ldots,m-1$. To finish, note that by property (b) above, we have
\[
m=\dim(\langle \CL_{\prec c_0}\rangle_\KK)=d(c_0)=\d(\CL)-i,
\]
and hence the condition $j\leq m-2$ is identical to $i+j\leq \d(\CL)-2$. 

Next, we deal with the case $i=0$ and $0\leq j\leq \d(\CL)-2$ of the statement. Write ${H=\{h_1,\ldots,h_n\}}$ (possibly $n=\infty$) and $B=\{h_1,\ldots,h_m\}$ with $m=\d(\CL)\leq n$ and where $B$ is a basis of $\langle \CL_{\prec \hat 1}\rangle_\KK=\langle H\rangle_\KK\leq V^*$. Then the following composition 
\[
\colim_{\langle h_1,\ldots,h_m\rangle_\CL } \Lambda^j\to \colim_{\CL } \Lambda^j=H_0(\CL\setminus\{V\};\Lambda^j)\to \Lambda^j(V)
\]
is injective by Step 1 and bijective if $j\leq \d(\CL)-1$ by Step 1. The leftmost map is surjective if $j\leq \d(\CL)-2$ by the argument of Step 3 and hence the result holds. 

For the case $i=0$ and $j=\dim(V)-1$, we have that $\Lambda^j(x)=\KK$ for $x\in H$ and $\Lambda^j(x)=0$ otherwise by \cref{equ:Lambdaj(x)=0_dim(x)<j}. The result follows.

Finally, we consider the case $H$ finite, $\dim(\hat 0)=0$, $1\leq i\leq \d(\CL)-2$, and $i+j=\d(\CL)-1$. Note that in this case we have $\d(\CL)=\dim(V)$ and, in general, $\d(x)=\dim(x)$ for all $x\in \CL$. In addition, by \eqref{equ:Lambdaj(x)=0_dim(x)<j}, we have that $\Lambda^j(x)=0$ if $\d(x)<\d(\CL)-1-i$. Thus, for the right ideal $\CL'=\CL_{\geq \d(\CL)-1-i}$ we have that $\d(\CL')=i+2$ and that
\[
H_*(\CL\setminus\{V\};\Lambda^j(-))\cong H_*(\CL'\setminus\{V\};\Lambda^j(-))\text{ for all $*\geq 0$}.
\]
Let $\CA$ equal all the atoms of $\CL'$ and set $\CL''=\CL'\setminus \CA=\CL_{\geq \d(\CL)-i}$ which has $\d(\CL'')=i+1$. Then, if $i=1$, \cref{thm:exact_sequence_for_atoms_covariant_colimit} gives an exact sequence
\[
0\to H_1(\CL\setminus \{V\};\Lambda^j)\to \bigoplus_{\substack{x\in \CL\\\d(x)=\d(\CL)-1-i}} \Lambda^j(x)^{|\mu(x,V)|}\to  \bigoplus_{\substack{x\in \CL\\\d(x)=\d(\CL)-i}} \Lambda^j(x)^{|\mu(x,V)|}\to  \colim_{\CL } \Lambda^j\to 0,
\]
and, if $i\geq 2$, the acyclic case above and \cref{cor:exact_sequence_for_atoms_and_stable_functor_covariant_colimit}  give a short exact sequence
\[
0\to H_i(\CL\setminus \{V\};\Lambda^j)\to \bigoplus_{\substack{x\in \CL\\\d(x)=\d(\CL)-1-i}} \Lambda^j(x)^{|\mu(x,V)|}\to H_{i-1}(\CL''\setminus\{V\};\Lambda^j)\to 0.
\]
In the former case, i.e., $i=1$, we have $j=\d(\CL)-2$ and, by the previous cases, $d_{0,j}= \begin{pmatrix} \dim(V)\\j  \end{pmatrix}$ and $\colim_{\CL } \Lambda^j\cong \Lambda^j(V)$. Furthermore, Step 1 is valid under the weaker condition  (with the notation there) that $\CL$ is a poset (instead of a lattice) containing $\{W_1,\ldots,W_m\}$ and their pairwise intersections. As Steps 2 and 3 are consequences of Step 1, we deduce that, for the truncation posets $\CL_{i'}=\CL_{\geq \d(\CL)-i'}$ with $3\leq i'\leq i$, Equation \eqref{equ:colimLambdal_main} for length $i'-1$ is valid and hence that $H_{i'-2}(\CL_{i'};\Lambda^j)=0$. Then we may induct on $i$ to finish the proof.
\end{proof}

\subsection{Homology decompositions for Bianchi groups.} In this subsection, for each Bianchi group $\Gamma_d=\PSL_2(\mathcal{O}_d)$, where $\mathcal{O}_d$ is the ring of integers of the quadratic field $\QQ(\sqrt{-d})$ and $d=\Bianchids$, we define a poset of subgroups $\CP_d$ containing the trivial subgroup and with $\Gamma_d\cong\colim_{U\in \CP_d} U$. Then we show that the fiber $F_d$ of the following fibration, see \cite[Theorem 5.1]{DiazRamos2009} and \cite[Theorem E]{CD23}, is contractible, proving \cref{thm:integral_homology_decomposition_Bianchi},
\begin{equation}\label{equ:fibration_to_BGamma}
F_d\to \hocolim_{U\in \CP_d} B(U)\to B(\Gamma_d).
\end{equation}
In fact, $F$ is simply conntected and we have  $H_n(F_d;\ZZ)=\colim_{n-1} H$ for $n\geq 2$, where  ${H\colon \CP_d\to \Ab}$ takes the following value on $U\in \CP_d$,
\[
H(U)=\{\sum_{g\in \Gamma_d} n_g\cdot g\in \ZZ[\Gamma_d]\mid \sum_{u\in U} n_{ug}=0\text{ for all $g\in \Gamma_d$}\},
\]
and takes morphisms to inclusions. By Whitehead's Theorem, $F_d$ is contractible if and only if the functor $H$ is acyclic. To check that this is indeed the case, we use below the shellability setup of Sections \ref{sec:vanishing} and \ref{sec:non-vanishing_stable_functor}. As a consequence, one could employ the Bousfield-Kan spectral sequence \cite[XII.5.7]{BK1972} to determine the (co)homology of the classifying spaces $B(\Gamma_d)$, see for instance \cite[Example 4.2]{CD23}.

\subsubsection{Bianchi group $\Gamma_1$.} Consider the following presentation of $\Gamma_1$, see \cite{MR0372059},
\[
\Gamma_1=\langle a,b,c,d\mid a^3=b^2=c^3=d^2=(ac)^2=(ad)^2=(bc)^2=(bd)^2=1\rangle,
\]
as well as the following poset $\CP_1$ of proper subgroups of $\Gamma_1$,
\[
\xymatrix@C=30pt@R=15pt{
{\langle a,c\rangle}& {\langle a,d\rangle}& {\langle c, d\rangle}& {\langle b,c\rangle}& {\langle b,d\rangle}\\
{\langle a\rangle}\ar@{-}@(u,d)[u]\ar@{-}@(u,d)[ru]& {\langle c\rangle}\ar@{-}@(l,d)[lu]\ar@{-}@(u,d)[ru]\ar@{-}@(r,d)[rru]&& {\langle d\rangle}\ar@{-}@(u,d)[lu]\ar@{-}@(r,d)[ru]\ar@{-}@(l,d)[llu]& {\langle b\rangle}\ar@{-}@(u,d)[lu]\ar@{-}@(u,d)[u]\\
}
\]
and the bounded poset $\CP=\CP_1\cup\{\hat 0, \hat 1\}$ with $\d(\CP)=3$. We note that 
\begin{equation}\label{equ:PSL2_Gamma1}
\langle c,d\rangle =\langle c\rangle \ast \langle d \rangle \simeq \PSL_2(\ZZ).    
\end{equation}
We equip $\CP$ with the recursive coatom ordering given by
\[
\langle a,c\rangle\ll \langle a,d\rangle \ll \langle b,c\rangle\ll \langle b,d\rangle\ll  \langle c, d\rangle
\]
and via an easy computation determine that 
\[
C(\langle a,c\rangle)=\emptyset\text{, }C(\langle a,d\rangle)=\{\langle a\rangle\}\text{, }C(\langle b,c\rangle)=\{\langle c\rangle\}\text{, }C(\langle b,d\rangle)=\{\langle b\rangle,\langle d\rangle\}\text{, }C(\langle c, d\rangle)=\{\langle c\rangle,\langle d\rangle\}.
\]
Note that the functor $H$ is acyclic if and only if $H_1(\CP\setminus\{\hat 1\};H)=0$. The hypotheses of   Theorem \ref{thm:exact_sequence_for_atoms_covariant_colimit}  with $\CA=\{\langle b\rangle\}$ hold because of \cite[Theorem A, 4.1]{CD23} and \eqref{equ:PSL2_Gamma1}, and so we have a short exact sequence
\[
0\to H_1(\overline{\CP};H)\to H(\langle b\rangle)\stackrel{\delta}\to H_0(\overline{\CP}\setminus \{\langle b\rangle\};H)\to H_0(\overline{\CP};H)\to 0,
\]
where $\delta$ is the connecting homomorphism. Note that $H_1(\overline{\CP};H)=0$ if and only if $\delta$ is injective, and we show that this indeed the case: An easy computation shows that 
\[
\delta(x_b)=[0\oplus 0\oplus 0\oplus \oplus x_b \oplus -x_b] 
\]
where $x_b\in H(\langle b\rangle)$ and $\delta(x_b)$ belongs to the quotient of 
\[
H(\langle a,c\rangle)\oplus H(\langle a,d\rangle)\oplus H(\langle c,d\rangle)\oplus H(\langle b,c\rangle)\oplus H(\langle b,d\rangle)
\]
by the equivalence relation determined by the colimit $ H_0(\overline{\CP}\setminus \{\langle b\rangle\};H)$. Thus, if $\delta(x_b)$ is zero, there exists $x_a,x'_a,x_c,x'_c,x''_c,x_d,x'_d,x''_d$ belonging to $H(\langle a\rangle),H(\langle c\rangle),H(\langle d\rangle)$ respectively with
\begin{align*}
x_a+x'_a&=0,x_c+x'_c+x''_c=0,x_d+x'_d+x''_d=0,\\
x_a+x_c&=0,x'_a+x_d=0,x'_c+x'_d=0,x''_c+x_b=0,x''_d-x_b=0.    
\end{align*}
Then we have $x_c+c_d=x'_c+x'_d=0$ and, by \cite[Theorems A, 4.1]{CD23} and \eqref{equ:PSL2_Gamma1}, also that $x_c=x_d=x'_c=x'_d=0$. Thus  $x''_c=x''_d=x_b=0$ and we are done.

\subsubsection{Bianchi group $\Gamma_2$.} Consider the following presentation of $\Gamma_2$, see \cite{FLOGE1983},
\[
\Gamma_2=\langle a,m,s,v,u\mid a^2=s^3=(am)^2=1,am=sv^2, u^{-1}au=m,u^{-1}su=v\rangle,
\]
the following poset $\CP_2$ of proper subgroups of $\Gamma_2$,
\[
\xymatrix@C=50pt@R=15pt{
{\langle a,m,s,v\rangle}& {\langle a,m,u\rangle}& {\langle s,v,u\rangle}\\
{\langle a,m\rangle}\ar@{-}@(u,d)[u]\ar@{-}@(u,d)[ru]& 
{\langle s,v\rangle}\ar@{-}@(l,d)[ul]\ar@{-}@(r,d)[ur]&
{\langle am,u\rangle}\ar@{-}@(u,d)[u]\ar@{-}@(u,d)[lu]\\
&{\langle am\rangle}\ar@{-}@(u,d)[ul]\ar@{-}@(u,d)[u]\ar@{-}@(u,d)[ur]
}
\]
and the bounded poset $\CP=\CP_2\cup\{ \hat 1\}$ with $\d(\CP)=3$. We note that
\begin{equation}\label{equ:amalgam_Gamma2}
\langle a,m,s,v\rangle = \langle a,m\rangle *_{\langle am\rangle}  \langle s,v\rangle\simeq K*_{C_2} A_4, 
\end{equation}
where $K$ is the Klein group and $A_4$ is the alternating group on 4 letters. We equip $\CP$ with the recursive coatom ordering given by
\[
\langle s,v,u\rangle\ll \langle a,m,u\rangle \ll \langle a,m,s,v\rangle
\]
and we determine that
\[
C(\langle s,v,u\rangle)=\emptyset\text{, }C(\langle a,m,u\rangle)=\{\langle am,u\rangle\}\text{, and }C(\langle a,m,s,v\rangle)=\{\langle a,m\rangle,\langle s,v\rangle\}.
\]
Because $H$ is a monic functor and because of \cite[Theorems A, 4.1]{CD23} and \eqref{equ:amalgam_Gamma2}, it is easy to check that $H$ has the stability property at $i=1,2$, so that, by \cref{dual/main_theorem}, $H$ is acyclic.

\subsubsection{Bianchi groups $\Gamma_7$ and $\Gamma_{11}$.}  Consider the following presentation of $\Gamma_7$, see \cite{FLOGE1983},
\[
\Gamma_7=\langle a,m,s,v,w\mid a^2=s^3=(av)^2=1,av=ms^2, w^{-1}aw=m,w^{-1}sw=v\rangle,
\]
the following poset $\CP_7$ of proper subgroups of $\Gamma_7$,
\[
\xymatrix@C=50pt@R=15pt{
{\langle a,m,s,v\rangle}& {\langle a,m,w\rangle}& {\langle s,v,w\rangle}\\
{\langle a,m\rangle}\ar@{-}@(u,d)[u]\ar@{-}@(u,d)[ru]& 
{\langle s,v\rangle}\ar@{-}@(l,d)[ul]\ar@{-}@(r,d)[ur]&
{\langle ma,w\rangle}\ar@{-}@(u,d)[u]\ar@{-}@(u,d)[lu]\\
&{\langle ma\rangle}\ar@{-}@(u,d)[ul]\ar@{-}@(u,d)[u]\ar@{-}@(u,d)[ur],
}
\]
where $ma=s^2v^2$, and the bounded poset $\CP=\CP_7\cup\{ \hat 1\}$ with $\d(\CP)=3$. We note that
\begin{equation}\label{equ:amalgam_Gamma7}
\langle a,m,s,v\rangle = \langle a,m\rangle *_{\langle ma\rangle}  \langle s,v\rangle.
\end{equation}
Then, we can repeat the arguments of the case $\Gamma_2$ with the recursive coatom ordering 
\[
\langle s,v,w\rangle\ll \langle a,m,w\rangle \ll \langle a,m,s,v\rangle.
\]
The arguments for $\Gamma_{11}$ are similar via the presentation,  see \cite{FLOGE1983} again,
\[
\Gamma_{11}=\langle a,m,s,v,w\mid a^2=s^3=(av)^3=1,av=sm, w^{-1}aw=m,w^{-1}sw=v\rangle.
\]

\bibliographystyle{alpha}
\bibliography{biblio.bib}
\end{document}